\documentclass[12pt,twoside]{article}
\usepackage[a4paper,width=160mm,top=30mm,bottom=25mm,bindingoffset=6mm]{geometry}

\usepackage{siunitx}
\usepackage{graphicx}
\usepackage{wrapfig}

\usepackage{comment}

\usepackage{bm}
\usepackage{amsmath}
\usepackage{amsthm}
\usepackage{amsfonts}

\newtheorem{theorem}{Theorem}
\newtheorem{claim}{Claim}
\newtheorem{proposition}[theorem]{Proposition}
\newtheorem{lemma}[theorem]{Lemma}
\newtheorem{corollary}[theorem]{Corollary}

\title{\bf{Scale-free spanning trees: complexity, bounds and algorithms}}

\date{}

\author{
        Yury Orlovich  \footnote{Faculty of Applied Mathematics and Computer Science, Belarusian State University, 220030, Minsk, Belarus}
        \and 
        Kirill Kukharenko \footnote{Institute for Mathematical Optimization, Otto von Guericke University Magdeburg, 39106, Magdeburg, Germany}
        \and
        Volker Kaibel \footnotemark[\value{footnote}]
        \and 
        Pavel Skums \footnote{Department of Computer Science, Georgia State University, Atlanta, GA 30303, USA}
}

\providecommand{\keywords}{\textbf{Keywords: }}


\usepackage{fancyhdr}
\pagestyle{fancy}

\fancyhead[RO,LE]{\thepage}
\fancyhead[LO]{\small \MakeUppercase{Orlovich et al.}}
\fancyhead[RE]{ \small {\MakeUppercase{Scale-free spanning trees: complexity, bounds and algorithms}}}

\setlength{\headheight}{15pt}

\renewcommand\abstractname{\uppercase{abstract}}

\renewenvironment{abstract}
 {\small
  \begin{center}
  \bfseries \abstractname\vspace{-.5em}\vspace{0pt}
  \end{center}
  \list{}{%
    \setlength{\leftmargin}{0mm}
    \setlength{\rightmargin}{\leftmargin}%
  }%
  \item\relax}
 {\endlist}
 

\renewenvironment{itemize}
{\begin{list}
             {\labelitemi}
             {\setlength{\labelwidth}{1em}
              \setlength{\labelsep}{0.7em}
              \setlength{\itemindent}{0em}
              \setlength{\listparindent}{3em}
              \setlength{\leftmargin}{3.6em}
              \setlength{\rightmargin}{0em}
              \setlength{\parsep}{0ex}
              \setlength{\topsep}{1ex}
              \setlength{\itemsep}{0ex}
             }%
}
{\end{list}}%

\begin{document}

\pagenumbering{arabic}
\maketitle

\begin{abstract}
\bf{We introduce and study the general problem of finding a most ``scale-free-like'' spanning tree of a connected graph. It is motivated by a particular problem in epidemiology, and may be useful in studies of various dynamical processes in networks. We employ two possible objective functions for this problem and introduce the corresponding algorithmic problems termed $m$-SF and $s$-SF Spanning Tree problems. We prove that those problems are APX- and NP-hard, respectively, even in the classes of cubic, bipartite and split graphs. We study the relations between scale-free spanning tree problems and the max-leaf spanning tree problem, which is the classical algorithmic problem closest to ours. For split graphs, we explicitly describe the structure of optimal spanning trees and graphs with extremal solutions. Finally, we propose two Integer Linear Programming formulations and two fast heuristics for the $s$-SF Spanning Tree problem, and experimentally assess their performance using simulated and real data.}
\end{abstract}

\keywords{scale-free network, spanning tree, optimal tree, combinatorial optimization, integer linear programming, NP-hardness.}

\setcounter{section}{0}
\section{Introduction and motivation}
\label{scale-free:sec1}

In the recent two decades, significant amount of research associated with applied graph-theoretical models has been dedicated to the so-called \emph{``scale-free''} graphs~\cite{barabasi1999emergence,bollobas2001degree,dorogovtsev2000structure}. The popularity of this concept originates from the fact that it seems to reflect important properties of graphs and networks arising in biology, social sciences, physics and engineering. It is usually assumed that a random scale-free graph possesses a particular set of properties, including a power-law degree distribution, a small diameter, presence of high-degree vertices and a certain self-similarity originated from the recursive probabilistic rule for its construction. 

The algorithmic and graph-theoretical problems studied in this paper originated from a problem from mathematical epidemiology~\cite{skums2017quentin}. Consider a graph $G$, whose vertices represent individuals infected by a virus, and edges represent the possibility of viral transmission between pairs of individuals (such possibilities are usually deduced by the experts from genetic or epidemiological evidence). The goal is to find the most probable transmission history (``who infected whom''). Under the assumption that each individual has been infected only once, feasible transmission histories correspond to spanning trees of $G$ (called \emph{transmission trees} in this context). It is known that for viruses, whose modes of transmission are associated with behavioral risk factors (e.g. HIV or Hepatitis C), their transmission trees have properties of scale-free graphs~\cite{wertheim2013global}. This observation gives rise to the following informally defined algorithmic problem (\emph{scale-free spanning tree problem}): given a graph $G$, find the most ``scale-free-like'' spanning tree of $G$. This problem may arise in other domains associated with the study of dynamical processes on scale-free networks (e.g. spread of information, opinion, etc.). 

In order to study the scale-free spanning tree problem, a mathematically rigorous definition of its objective function is required. Several non-equivalent definitions of scale-free graphs of various degree of mathematical rigour have been used in the literature. One of the most precise definitions allowing to incorporate or deduce most of the expected properties of scale-free graphs has been introduced in~\cite{li2005towards} using the so-called \emph{$s$-metric} of a graph. This graph invariant is defined as follows:
\begin{equation}
\label{eq:smetric}
s(G) = \sum\limits_{uv \in E(G)} \deg u \deg v.
\end{equation}
The same parameter is known in mathematical chemistry under the name \emph{second Zagreb index}~\cite{borovicanin2017bounds,das2004some}. A series of propositions proved in~\cite{li2005towards} demonstrates that in the space of random graphs with the same expected degree sequence, higher $s$-metric indicates with  high probability the presence of most of the expected properties of scale-free graphs. The intuition behind these results is that in graphs with high $s$-metric a large number of edges should be incident to high-degree vertices, thus forcing them to be structurally similar to graphs produced by preferential attachment process, which is a standard model of scale-free networks formation~\cite{barabasi1999emergence}. Given this observation, another classical mathematical chemistry parameter called the \emph{first Zagreb index}~\cite{borovicanin2017bounds} also can serve as a measure of ``scale-freeness'' of a graph. This parameter is defined as 
\begin{equation}
\label{eq:mmetric}
m(G) = \sum\limits_{u\in V(G)} (\deg u)^2\, = \sum\limits_{uv \in E(G)} (\deg u + \deg v).
\end{equation}

Thus, we can formulate two variants of the scale-free spanning tree problem:

\textsc{$m$-SF Spanning Tree}

\emph{Given}: A connected graph $G$.

\emph{Find}: A spanning tree $T$ of $G$ such that $m(T)$ is maximum.

\textsc{$s$-SF Spanning Tree}

\emph{Given}: A connected graph $G$.

\emph{Find}: A spanning tree $T$ of $G$ such that $s(T)$ is maximum.

Both problems are naturally associated with the \emph{first} and \emph{second SF-dimensions} of $G$ denoted by $\tau_1(G)$ and $\tau_2(G)$, respectively, and defined as follows:
\begin{equation}
\label{eq:sdim}
\tau_1(G) = \max\limits_{T\in \mathcal{T}(G)}\{m(T)\},\qquad \tau_2(G) = \max\limits_{T \in \mathcal{T}(G)}\{s(T)\},
\end{equation}
where the maximums are taken over the set $\mathcal{T}(G)$ of all spanning trees of $G$.

The related problem has been studied in~\cite{kincaid2016algorithms}. In that paper, the problem under consideration is, given a graph $G$, to find a spanning subgraph $H$ with \emph{prescribed vertex degrees} such that its $s$-metric is maximum. It has been demonstrated that this problem is polynomially solvable in general (by reduction to the $f$-factor problem~\cite{schrijver2003combinatorial}), but becomes NP-hard, when the additional constraint is added stating that the output spanning subgraph has to be connected.

In this paper, we present the first detailed study of the scale-free spanning tree problems from both theoretical and practical sides. Our contributions are summarized as follows. 

\begin{itemize}
    \item[1)] We establish the computational complexity of the \textsc{$m$-SF Spanning Tree} and \textsc{$s$-SF Spanning Tree} problems. We demonstrate that these problems are NP-hard or APX-hard, even when restricted to the classes of cubic graphs and bipartite graphs.
    
    \item[2)] We explore the relations between the SF-dimensions of a graph and the maximum number of leaves in its spanning trees. The latter defines a well-studied combinatorial problem Maximum Leaf Spanning Tree. \cite{galbiati1994short,griggs1989spanning,lu1998approximating,reich2016complexity}, which seems to be the closest to our problem. Indeed, both problems aim to find a ``star-like'' spanning tree; furthermore, several reduction schemes from the previous section exploit this relation. Given these observations, it may seem reasonable to try to adopt algorithmic machinery developed for the Maximum Leaf Spanning Tree problem. We prove the sharp upper bound for the  $s$-metric of a tree in terms of its number of leafs and diameter which, in conjunction with previously known similar lower bounds, reinforce such connections.  On the other hand, we present a family of counter-examples demonstrating that in general the difference between the SF-dimensions of a graph and its max-leaf spanning trees could be arbitrarily large. 
    
    \item[3)] We study in detail SF-dimension of split graphs~--- well-known class of graphs extensively used in both theory and applications \cite{golumbic2004algorithmic,mahadev1995threshold}. In particular, a number of generally NP-hard problems become polynomially solvable when restricted to split graphs \cite{brandstadt1999graph}.  Here  we establish sharp lower and upper bounds on the second SF-dimension and characterize the extremal graphs with respect to them. These results also imply the problem NP-hardness for split graphs, but its polynomial solvability in its subclass of threshold graphs.
    
    \item[4)] On the practical side, we propose two Integer Linear Programming formulations and two fast heuristics for the \textsc{$s$-SF Spanning Tree} problem, and perform computational experiments to assess their performance using simulated graphs and experimental graphs constructed from genomic data used for viral outbreaks investigation. The latter results are used to demonstrate how the concept of scale-free spanning tree could be useful in computational epidemiology.
\end{itemize}

\section{Notations, definitions and preliminary results}
\label{scale-free:sec2}

In this paper, we consider only finite, undirected graphs without loops and multiple edges. Also all graphs are assumed to be connected. We use graph-theoretic terminology of Chartrand et al.~\cite{chartrand2010graphs} (unless noted otherwise), and computational complexity terminology of Garey and Johnson~\cite{garey2002computers}. For concepts related to approximability, we follow Ausiello et al.~\cite{ausiello1999approximability}.

Let $G$ be a graph. The vertex set and the edge set of $G$ are denoted by $V(G)$ and $E(G)$, respectively. We denote by $|G|$ the \emph{order} of $G$ (i.e., $|G| = |V(G)|$). A \emph{clique} of $G$ is a set of pairwise adjacent vertices and an \emph{independent set} of $G$ is a set of pairwise nonadjacent vertices. A graph $H$ is a \emph{subgraph} of the graph $G$ if $V(H) \subseteq V(G)$ and $E(H) \subseteq E(G)$. If $V(H) = V(G)$, then $H$ is a \emph{spanning subgraph} of $G$. If two distinct vertices $u, v \in V(G)$ are adjacent, then the edge connecting them will be denoted by $uv$. The vertices $u$ and $v$ are called the \emph{end-vertices} of the edge $uv$. As usual, $N_G(x)$ denotes the \emph{neighborhood} of a vertex $x \in V(G)$, i.e., the set of all vertices that are adjacent to $x$ in $G$. If $y \in N_G(x)$, then $y$ is called a \emph{neighbor} of $x$ in $G$. The \emph{degree} of $x$ is defined as $\deg_G x = |N_G(x)|$. If the graph $G$ is clear from the context, we often omit the subscript $G$. A vertex of degree $0$ is referred to as an \emph{isolated} vertex and a vertex of degree $|G| - 1$ is a \emph{universal} vertex. A \emph{leaf} is a vertex of degree $1$. An edge incident with a leaf is called a \emph{pendant edge}. The maximum degree among the vertices of $G$ is denoted by $\Delta(G)$. 

A \emph{tree} is a connected acyclic graph. A \emph{spanning tree} of a graph $G$ is a spanning subgraph of $G$ that is a tree. We denote by $\ell(G)$ the maximum number of leaves in a spanning tree of $G$. A graph $G$ is called \emph{split} if its vertex set $V(G)$ can be partitioned into sets $K$ and $I$ such that $K$ is a clique and $I$ is an independent set. The complete graph, the path and the cycle on $n$ vertices are denoted by $K_n$, $P_n$ and $C_n$, respectively. A \emph{star} $K_{1, n}$ is the complete bipartite graph with partition classes of cardinalities $1$ and $n$. A \emph{double star} $S_{m, n}$ is the tree obtained from two disjoint stars $K_{1, m}$ and $K_{1, n}$ with $m$ and $n$ leaves, respectively, by adding an edge joining the central vertices of the two stars. For the purposes of Section~\ref{scale-free:sec4}, we will need the notion of a \emph{null graph} $K_0$ (in the terminology of Tutte~\cite{tutte84}), i.e., the graph having no edges and no vertices. 


Let $T$ be a tree. For a pair $(u, v)$ of distinct vertices $u, v \in V(T)$, let $P_T(u, v)$ be a unique path connecting $u$ and $v$ in $T$. We will denote by $u^{+}$ and $v^{-}$ the neighbors of $u$ and $v$ on $P_T(u, v)$, respectively.

The \emph{complement} $\overline{G}$ of a graph $G$ is the graph whose vertex set is $V(G)$ and where $e$ is an edge of $\overline{G}$ if and only if $e$ is not an edge of $G$. The \emph{corona} $G_1 \circ G_2$ of two graphs $G_1$ and $G_2$ is the graph obtained by taking one copy of $G_1$ and $n$ copies of $G_2$ (where $n$ is the order of $G_1$), and by joining each vertex of the $i$th copy of $G_2$ to the $i$th vertex of $G_1$, $i = 1, 2, \ldots, n$.

The invariants \emph{$s$-metric}, \emph{$m$-metric}, \emph{first SF-dimension} and \emph{second SF-dimension} of a graph $G$ are defined by expressions~\eqref{eq:smetric}, \eqref{eq:mmetric} and~\eqref{eq:sdim}, respectively. 
By $T^{\mathrm{sopt}}$ and $T^{\mathrm{mopt}}$ we denote an \emph{$s$-optimal tree} and an \emph{$m$-optimal tree} of $G$, respectively. Thus, we have $s(T^{\mathrm{sopt}}) = \tau_2(G)$ and $m(T^{\mathrm{mopt}}) = \tau_1(G)$. 

It is possible to provide lower and upper bounds for both SF-dimensions of a graph in terms of its order only. They follow from the bounds on first \cite{das2003sharp,de1998upper} and second \cite{das2004some} Zagreb indices of $n$-vertex trees derived in prior studies: 

\begin{proposition}[\cite{das2004some,das2003sharp,de1998upper}]
\label{kkos-theorem1}
For any tree $T$ of order $n \ge 3$,
$$4n - 6 \le m(T) \le n(n - 1),\qquad
4n - 8 \le s(T) \le (n - 1)^2.$$
Lower bounds are achieved if and only if $T \cong P_n$, and upper bounds are achieved whenever $T\cong K_{1,n-1}$.
\end{proposition}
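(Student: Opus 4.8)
The plan is to verify the extremal values by direct substitution and then to prove optimality separately for the two indices, exploiting the fact that $m$ depends only on the degree sequence whereas $s$ does not. First I would record the values: for $P_n$ two vertices have degree $1$ and $n-2$ have degree $2$, giving $m(P_n)=2+4(n-2)=4n-6$ and $s(P_n)=4+4(n-3)=4n-8$; for $K_{1,n-1}$ the centre has degree $n-1$ and every other vertex is a leaf, giving $m(K_{1,n-1})=n(n-1)$ and $s(K_{1,n-1})=(n-1)^2$. (Note that for $n=3$ one has $P_3\cong K_{1,2}$, so the single tree attains all four bounds at once; this is why equality for the upper bounds is only asserted as a sufficient condition.)

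For the two bounds on $m$, I would use that $m(T)=\sum_{v\in V(T)}(\deg v)^2$ is a function of the degree sequence alone. Every tree of order $n$ has a degree sequence of positive integers summing to $2n-2$, so it suffices to optimise $\sum_i d_i^2$ over all integer sequences $d_1,\dots,d_n\ge 1$ with $\sum_i d_i=2n-2$ and then check that the optimal sequences are realised (uniquely) by the claimed trees. The optimisation is a standard smoothing argument: replacing a pair $(d_i,d_j)$ with $d_i\ge d_j\ge 2$ by $(d_i+1,d_j-1)$ strictly increases $\sum_i d_i^2$ (the change is $2(d_i-d_j)+2>0$), which drives every maximiser to $(n-1,1,\dots,1)$, the degree sequence of $K_{1,n-1}$; conversely, replacing a pair with $d_i\ge d_j+2$ by $(d_i-1,d_j+1)$ strictly decreases the sum, forcing every minimiser to have all degrees in $\{1,2\}$, and the constraint on the sum then pins the sequence to $(2,\dots,2,1,1)$, that of $P_n$. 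Since $K_{1,n-1}$ is the unique tree with a vertex of degree $n-1$ and $P_n$ the unique tree of maximum degree at most $2$, the equality cases follow.

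The bound $s(T)\le(n-1)^2$ is the part I expect to be genuinely delicate, because $s$ is not determined by the degree sequence and the obvious ``move a branch toward a vertex of maximum degree'' operation need not help: one can exhibit small trees on which detaching a leaf and reattaching it at a maximum-degree vertex strictly \emph{decreases} $s$. The transformation I would use instead collapses a whole edge: given an edge $uv$ with $\deg u=a$ and $\deg v=b$, reattach every neighbour of $v$ other than $u$ (together with its subtree) to $u$, turning $v$ into a leaf while keeping the order and tree structure intact. Writing $D_u,D_v$ for the sums of the neighbour-degrees of $u$ and $v$ and tracking the affected edges, the change works out to $s(T')-s(T)=(a+b-1-ab)+(a-1)(D_v-a)+(b-1)(D_u-b)$; using $D_v-a\ge b-1$ and $D_u-b\ge a-1$ this collapses to the clean estimate $s(T')-s(T)\ge(a-1)(b-1)$. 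Hence the move never decreases $s$, and it strictly increases $s$ whenever both endpoints have degree at least $2$. Since a non-star tree always contains such an edge (for $n\ge 3$ its internal vertices induce a subtree with at least two vertices once $T\not\cong K_{1,n-1}$) and each collapse turns one internal vertex into a leaf, iterating reaches $K_{1,n-1}$ while only increasing $s$; this gives both the bound and its equality case.

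For the lower bound $s(T)\ge 4n-8$ I would run the opposite transformation, which lengthens the tree. If $T\not\cong P_n$ then, taking a deepest branch vertex, it has a vertex $w$ of degree at least $3$ carrying two pendant paths; detaching one of them from $w$ and grafting its end onto the free end of the other reduces $\deg w$ by $1$ and removes one leaf, and the same bookkeeping as above shows this change is strictly negative (a short computation bounds it above by $-(\deg w-2)$ times a positive quantity). As the number of leaves strictly drops at each step and is bounded below by $2$, the process terminates at a tree with exactly two leaves, i.e.\ at $P_n$, so $P_n$ is the unique minimiser. The main obstacle throughout is isolating the single transformation that is provably monotone for $s$; once the identity for $s(T')-s(T)$ and the estimate $(a-1)(b-1)$ are in hand, everything else is bookkeeping.
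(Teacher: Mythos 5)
Your proposal is correct, and it supplies a proof that the paper itself does not contain: Proposition~\ref{kkos-theorem1} is imported from the literature on Zagreb indices (\cite{das2003sharp,de1998upper} for $m$ and \cite{das2004some} for $s$) with no argument given. Judged on its own terms, every step checks out. The degree-sequence smoothing for $m$ is sound, since tree degree sequences are exactly the positive integer sequences summing to $2n-2$, and the two smoothing moves pin the extremal sequences to $(n-1,1,\dots,1)$ and $(2,\dots,2,1,1)$, each realized by a unique tree. For the $s$ upper bound, your edge-collapse identity is correct: with $D_u-b=D_A$ and $D_v-a=D_B$ (degree sums over $N(u)\setminus\{v\}$ and $N(v)\setminus\{u\}$) one gets $s(T')-s(T)=(a+b-1-ab)+(b-1)D_A+(a-1)D_B\ge -(a-1)(b-1)+2(a-1)(b-1)=(a-1)(b-1)$, which is strictly positive on any edge joining two internal vertices; since the internal vertices of a non-star tree induce a subtree with at least two vertices, iteration reaches $K_{1,n-1}$ while increasing $s$. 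This move is in fact exactly the paper's ``total neighbor switch'' $\mathcal{S}_{v\to u}^{B}$ for adjacent $u,v$ (Section~\ref{switch:sec2.5}, Corollary~\ref{kkos-corollary1}), so your argument is a self-contained re-derivation of the cited bound using the very technique the authors develop for their later structural results; what you add beyond their toolkit is the clean quantitative estimate $(a-1)(b-1)$ and the path-lengthening move for the lower bound, whose case check (the decrease is at most $-(\deg w-2)$, hence strictly negative for $\deg w\ge 3$) I verified. As a bonus your argument gives the star as the \emph{unique} maximizer, which is stronger than the one-directional ``whenever'' claimed in the statement.
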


This proposition directly implies the following corollary:

\begin{corollary}\label{kkos-theorem3}
For any graph $G$ of order $n \ge 3$,
$$4n - 6 \le \tau_1(G) \le n(n - 1),\qquad
4n - 8 \le \tau_2(G) \le (n - 1)^2,$$
with equalities for the lower bounds if and only if $G$ is isomorphic to $P_n$ or $C_n$, and equalities for the upper bounds if and only if $G$ has a universal vertex.
\end{corollary}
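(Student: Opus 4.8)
The plan is to reduce everything to Proposition~\ref{kkos-theorem1} by exploiting that $\tau_1(G)$ and $\tau_2(G)$ are maxima of $m$ and $s$ over the finite nonempty family $\mathcal{T}(G)$ of spanning trees, each of which is a tree of order $n$.

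I would first dispose of the two inequality chains. For the upper bounds, every $T \in \mathcal{T}(G)$ satisfies $m(T) \le n(n-1)$ and $s(T) \le (n-1)^2$ by Proposition~\ref{kkos-theorem1}, so taking the maximum over $\mathcal{T}(G)$ yields $\tau_1(G) \le n(n-1)$ and $\tau_2(G) \le (n-1)^2$. For the lower bounds, every $T \in \mathcal{T}(G)$ satisfies $m(T) \ge 4n-6$ and $s(T) \ge 4n-8$; since $\tau_1(G)$ and $\tau_2(G)$ are maxima of quantities all bounded below by these values, we get $\tau_1(G) \ge 4n-6$ and $\tau_2(G) \ge 4n-8$ immediately.

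Next I would treat the equality cases for the upper bounds. If $G$ has a universal vertex $v$, the star centered at $v$ is a spanning tree isomorphic to $K_{1,n-1}$, which attains $m = n(n-1)$ and $s = (n-1)^2$ by Proposition~\ref{kkos-theorem1}; together with the upper bounds this forces $\tau_1(G) = n(n-1)$ and $\tau_2(G) = (n-1)^2$. For the converse I would invoke the fact (part of the cited extremal results; for $m$ it follows from a one-line majorization argument, as the degree sequence $(n-1,1,\dots,1)$ majorizes every tree degree sequence of order $n$ and $\sum_v (\deg v)^2$ is strictly Schur-convex) that $K_{1,n-1}$ is the \emph{unique} tree of order $n$ attaining each upper bound. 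Thus $\tau_1(G) = n(n-1)$ produces some $T \in \mathcal{T}(G)$ with $m(T) = n(n-1)$, whence $T \cong K_{1,n-1}$; its center is adjacent in $T \subseteq G$ to all other $n-1$ vertices and is therefore universal in $G$. The argument for $\tau_2$ is identical.

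Finally, the equality cases for the lower bounds, which I expect to be the main obstacle. The key observation is that, because every spanning tree satisfies $m(T) \ge 4n-6$, the equality $\tau_1(G) = 4n-6$ (a maximum equalling the global minimum of the admissible range) forces $m(T) = 4n-6$ for \emph{every} $T \in \mathcal{T}(G)$, and by Proposition~\ref{kkos-theorem1} this means every spanning tree of $G$ is isomorphic to $P_n$; the same reduction applies verbatim to $\tau_2(G) = 4n-8$. It then suffices to show that, for connected $G$ of order $n \ge 3$, every spanning tree is a path if and only if $G \cong P_n$ or $G \cong C_n$. The direction $\Leftarrow$ is clear. For $\Rightarrow$ I would argue the contrapositive: if $\Delta(G) \ge 3$, pick $v$ with $\deg_G v \ge 3$ and any spanning tree $T_0$; whenever $\deg_{T_0} v \le 2$ there is a non-tree edge $vw$, and adding it creates a unique fundamental cycle through $v$ which, having length at least three, contains an edge not incident to $v$, whose removal gives a spanning tree with $\deg v$ increased by one. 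Iterating yields a spanning tree in which $v$ has degree at least three, hence one that is not a path. Since a connected graph all of whose spanning trees are paths must have $\Delta(G) \le 2$, and a connected graph of order $n \ge 3$ with $\Delta(G) \le 2$ is precisely $P_n$ or $C_n$, the characterization follows.
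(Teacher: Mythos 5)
Your proof is correct and follows the route the paper intends: the corollary is stated as a direct consequence of Proposition~\ref{kkos-theorem1}, and your argument is exactly that reduction, carried out in full. You also supply two details the paper leaves implicit but which are genuinely needed --- the uniqueness of $K_{1,n-1}$ as the maximizer (Proposition~\ref{kkos-theorem1} only states the ``whenever'' direction for the upper bounds) and the characterization of connected graphs all of whose spanning trees are paths as exactly $P_n$ and $C_n$ --- and both are handled correctly.
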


In the remaining part of this section, we introduce major proof techniques employed in this paper and prove several preliminary results.

\subsection{Path counting}
\label{pathcount:sec2.5}

This technique allows for efficient calculation of $m$-metric and $s$-metric and comparison of their values for structurally similar graphs. It is used to establish complexity results presented in Section \ref{scale-free:sec5}. The technique is based on the  
following expressions for the $m$-metric and $s$-metric in terms of numbers of trails of lengths at most 3:

\begin{proposition}\label{kkos-lemma3}	
For any graph $G$,
$$m(G) = 2\gamma_2(G) + 2\gamma_1(G), \qquad s(G) = \gamma_3(G) + 2\gamma_2(G) + \gamma_1(G),$$
where $\gamma_{t}(G)$ is the number of trails in $G$ with $t$ edges.
\end{proposition}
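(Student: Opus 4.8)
The plan is to read off both $m(G)$ and $s(G)$ as counts of short trails by expanding the relevant degree products one vertex or one edge at a time. Throughout I count trails up to reversal, so that the length-$1$ trails are the edges, the length-$2$ trails are the two-edge paths, and the length-$3$ trails are the three-edge paths together with the triangles, a triangle being a closed trail of length $3$. One feature will be decisive below: as a length-$3$ trail up to reversal, a triangle admits \emph{three} distinct representatives, one for each choice of which of its edges is traversed in the middle.

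For the $m$-metric I would start from $m(G) = \sum_{u \in V(G)} (\deg u)^2$ and split each square as $(\deg u)^2 = \deg u + \deg u(\deg u - 1)$. Summing the first term gives $\sum_u \deg u = 2|E(G)| = 2\gamma_1(G)$, since every edge contributes its two end-vertices. Summing the second gives $\sum_u \deg u(\deg u - 1) = 2\sum_u \binom{\deg u}{2} = 2\gamma_2(G)$, because a length-$2$ trail is determined by its centre $u$ together with an unordered pair of distinct neighbours of $u$. Adding the two identities yields $m(G) = 2\gamma_2(G) + 2\gamma_1(G)$.

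For the $s$-metric I would expand the product on each edge as $\deg u \deg v = 1 + \bigl[(\deg u - 1) + (\deg v - 1)\bigr] + (\deg u - 1)(\deg v - 1)$ and sum over $uv \in E(G)$. The constant term contributes $|E(G)| = \gamma_1(G)$. The bracketed term contributes $\sum_{uv}(\deg u + \deg v) - 2|E(G)| = m(G) - 2\gamma_1(G) = 2\gamma_2(G)$, using the identity just proved. It then remains to show that the last term satisfies $\sum_{uv \in E(G)}(\deg u - 1)(\deg v - 1) = \gamma_3(G)$, which is the heart of the argument.

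For that final identity I would exhibit a bijection with the length-$3$ trails. Fixing an arbitrary labelling of the two endpoints of each edge, the summand $(\deg u - 1)(\deg v - 1)$ counts the pairs $(x,y)$ with $x \in N(u) \setminus \{v\}$ and $y \in N(v) \setminus \{u\}$, and I would send each such pair to the walk $x\,u\,v\,y$. The conditions $x \neq v$ and $y \neq u$ force the edges $xu$, $uv$, $vy$ to be pairwise distinct, so this walk is a genuine trail of length $3$ whose middle edge is $uv$. The middle edge is reversal-invariant, so trails attached to different edges are distinct (injectivity), and every length-$3$ trail, oriented so that its middle edge appears as labelled, arises from exactly one pair (surjectivity). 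The one point demanding care is the closed case: an open trail has a single middle edge and is counted once, whereas a triangle on $\{a,b,c\}$ is produced three times, once per edge serving as the middle edge, through the degenerate pairs $(c,c)$, $(a,a)$, $(b,b)$, and this is precisely the multiplicity with which a triangle occurs as a length-$3$ trail up to reversal. Confirming this triangle bookkeeping is the main obstacle; once it is settled, the three contributions combine to give $s(G) = \gamma_3(G) + 2\gamma_2(G) + \gamma_1(G)$.
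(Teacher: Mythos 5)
Your proof is correct, but it follows a genuinely different route from the paper's. The paper proves the $s$-identity by writing $s(G) = \tfrac{1}{2}\boldsymbol{d}^T A \boldsymbol{d} = \tfrac{1}{2}\boldsymbol{1}^T A^3 \boldsymbol{1}$, interpreting this as half the number of walks of length $3$, and then sorting those walks according to whether they are supported on a single edge, a $2$-path, or a $3$-path (allowing the closed case); the trail counts fall out of the multiplicities $2$, $4$, $2$ with which each such configuration generates $3$-walks. You instead avoid the adjacency matrix entirely and expand the degree products termwise: $(\deg u)^2 = \deg u + \deg u(\deg u - 1)$ for the $m$-metric, and $\deg u \deg v = 1 + (\deg u - 1) + (\deg v - 1) + (\deg u - 1)(\deg v - 1)$ for the $s$-metric, reducing everything to the identity $\sum_{uv \in E}(\deg u - 1)(\deg v - 1) = \gamma_3(G)$, which you establish by a middle-edge bijection. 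Your argument is more elementary and makes fully explicit the one delicate point --- that a triangle must be counted three times in $\gamma_3$, once per choice of middle edge --- which the paper only signals through the parenthetical ``with the possibility that $v_1 = v_4$.'' What the paper's approach buys is brevity and a uniform mechanism (classifying $3$-walks) that treats all three terms at once; what yours buys is transparency about the convention for closed trails and independence from spectral/matrix machinery. Both are complete and yield the same identities.
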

\begin{proof}
We prove only the second equality, the first one can be verified similarly.
Let $A$ be the adjacency matrix of $G$ and $\boldsymbol{d}$ be its degree vector. By the definition, $s(G) = \frac{1}{2} \boldsymbol{d}^T\cdot A \cdot \boldsymbol{d}$. For $\boldsymbol{d}$, in turn, we have $\boldsymbol{d} = A\cdot\boldsymbol{1}$, where $\boldsymbol{1} = (1,\ldots, 1)^T \in \mathbb{R}^n$. Therefore 
$$s(G) = \frac{1}{2} \boldsymbol{1}^T\cdot A^3 \cdot \boldsymbol{1} = \frac{1}{2}\sum\limits_{i = 1}^n \sum\limits_{j = 1}^n A^3_{i, j}.$$

It is well known, that $A^3_{i, j}$ is equal to the number of walks of length 3 between vertex $i$ and vertex $j$. Thus, $s(G)$ is equal to one-half of the total number of 3-walks in $G$.  An edge $\{v_1, v_2\}$ produces exactly two such walks: $W_1 = (v_1, v_2, v_1, v_2)$ and $W_2 = (v_2, v_1, v_2, v_1)$. Each 2-path $(v_1, v_2, v_3)$ produces four 3-walks: $W_1 = (v_1, v_2, v_3, v_2)$, $W_2 = (v_3, v_2, v_1, v_2)$, $W_3 = (v_2, v_1, v_2, v_3)$ and $W_4 = (v_2, v_3, v_2, v_1)$. Finally, each 3-path $(v_1, v_2, v_3, v_4)$ (with the possibility that $v_1 = v_4$) produces two 3-walks: $W_1 = (v_1, v_2, v_3, v_4)$ and $W_2 = (v_4, v_3, v_2, v_1)$. As every 3-walk of $G$ has one of these forms, the statement of the lemma follows. 
\end{proof}

\subsection{Neighbor switching}
\label{switch:sec2.5}

In this subsection we present a switching technique, introduced informally in~\cite{das2004some}, which is based on tree transformations and turned out to be a useful tool for obtaining structural and complexity results in our paper. 

Let $T$ be a tree and let $(u, v)$ be a pair of distinct vertices $u, v \in V(T)$ lying on the path $P_T(u, v)$, where $\deg_T u = p \ge 2$ and $\deg_T v = t \ge 2$. Let $A = N_T(u) \setminus \{u^{+}\} = \{a_1, \ldots, a_{p - 1}\}$, and the set $N_T(v) \setminus \{v^{-}\}$ is partitioned into two subsets $B = \{b_1, \ldots, b_q\}$ and $C = \{c_1, \ldots, c_r\}$, where $B \ne\emptyset$. Further, let $\deg_T u^{+} = \alpha$ and $\deg_T v^{-} = \beta$. Define numbers $D_A$, $D_B$ and $D_C$ as follows:
\begin{equation}\label{kkos-equ1}
D_A = \sum_{i = 1}^{p - 1} \deg_T a_i,\qquad	
D_B = \sum_{j = 1}^q \deg_T b_j,\qquad
D_C = \sum_{k = 1}^r \deg_T c_k.
\end{equation}


Now for the fixed pair $(u,v)$ we can perform the switching, i.e. a transformation producing a new tree $\widetilde{T}$ from $T$ as follows: we delete the edges
$vb_1, \ldots, vb_q$ and add new edges $ub_1, \ldots, ub_q$. In this case we say that $\widetilde{T}$ is produced
from the tree $T$ by the \emph{neighbor switch $\mathcal{S}_{v \rightarrow u}^B$} (or simply $\mathcal{S}_{v \rightarrow u}^B(T) = \widetilde{T}$).
The neighbor switch is illustrated in Fig.~\ref{kkos-figure2}. Note
that it changes only the degrees of the vertices $u$
and $v$, i.e.
$\deg_{\widetilde{T}} u = p + q$, $\deg_{\widetilde{T}} v = r + 1$, and
$\deg_{\widetilde{T}} x = \deg_T x$ for every vertex
$x \in V(T) \setminus \{u, v\}$.

\begin{figure}[h]
\centering
\includegraphics[width=11cm]{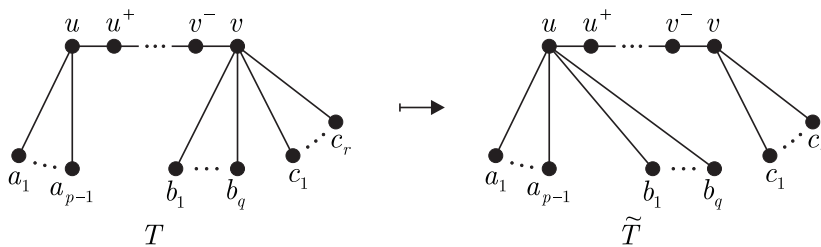}
\caption{An illustration of the neighbor switch}
\label{kkos-figure2}
\end{figure}

Taking into account definitions made above, we can prove the following lemma.

\begin{lemma}\label{kkos-lemma1}
Suppose that $\mathcal{S}_{v \rightarrow u}^B(T) = \widetilde{T}$. If $p \ge r + 1$, $D_A > D_C$ and additionally  $\alpha \ge \beta$, when $u$ and $v$ are not adjacent. 
Then $s(\widetilde{T}) > s(T)$.
\end{lemma}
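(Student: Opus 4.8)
The plan is to work directly from the definition $s(G)=\sum_{xy\in E(G)}\deg x\,\deg y$ and to evaluate the difference $s(\widetilde{T})-s(T)$ edge by edge. The switch $\mathcal{S}_{v \rightarrow u}^{B}$ alters only the degrees of $u$ and $v$, raising $\deg u$ from $p$ to $p+q$ and lowering $\deg v$ from $t=q+r+1$ to $r+1$, while leaving every other vertex degree untouched and replacing the $q$ edges $vb_j$ by the edges $ub_j$. Consequently the only edge contributions that change are those of the edges incident to $u$ (namely $ua_1,\dots,ua_{p-1}$ and the path edge $uu^{+}$), those incident to $v$ (namely $vc_1,\dots,vc_r$ and the path edge $vv^{-}$), and the swapped edges, where each $vb_j$ is removed and each $ub_j$ is added. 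First I would record these contributions, using $D_A$, $D_B$, $D_C$ to collect the terms coming from $A$, $B$, $C$ respectively, and substitute $t=q+r+1$ throughout.

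The computation naturally splits into two cases according to whether $u$ and $v$ are adjacent, since this determines how the path edge is handled. When $u$ and $v$ are non-adjacent, the path edges $uu^{+}$ and $vv^{-}$ are distinct and each has exactly one endpoint whose degree changes, the far endpoints contributing the fixed degrees $\alpha$ and $\beta$. Carrying out the bookkeeping, I expect the difference to collapse to
\[
 s(\widetilde{T})-s(T)=q\,(D_A-D_C)+q\,(\alpha-\beta)+(p-r-1)\,D_B .
\]
Each summand is then nonnegative under the hypotheses: $q\ge 1$ (as $B\ne\emptyset$) together with $D_A>D_C$ makes the first term strictly positive, $\alpha\ge\beta$ makes the second nonnegative, and $p\ge r+1$ together with $D_B\ge q\ge 1$ makes the third nonnegative; hence $s(\widetilde{T})>s(T)$.

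When $u$ and $v$ are adjacent, the single edge $uv$ plays the role of both $uu^{+}$ and $vv^{-}$, so both of its endpoints change degree simultaneously; here $u^{+}=v$ and $v^{-}=u$, i.e.\ $\alpha=t$ and $\beta=p$. Recomputing the contribution of $uv$ as $(p+q)(r+1)-p(q+r+1)$ and combining, I expect the difference to take the form
\[
 s(\widetilde{T})-s(T)=q\,(D_A-D_C)+(p-r-1)\,(D_B-q),
\]
in which the hypothesis $\alpha\ge\beta$ is no longer needed; positivity follows from $D_A>D_C$, from $p\ge r+1$, and from $D_B\ge q$ (as each $\deg_T b_j\ge 1$). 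This matches exactly the lemma's proviso that $\alpha\ge\beta$ is required only when $u$ and $v$ are not adjacent.

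The main obstacle I anticipate is the careful edge accounting rather than any deep idea: one must ensure that no edge is counted twice and that the path edge is attributed correctly. The delicate point is precisely the adjacent case, where $uv$ has two changing endpoints and must not be double-counted as both $uu^{+}$ and $vv^{-}$. A related subtlety is the non-adjacent subcase in which the path has length two and $u^{+}=v^{-}$, which is harmless since then $\alpha=\beta$ and the first displayed formula still applies verbatim. Finally, I would check that the edges joining each $b_j$ to its remaining neighbors are unaffected, because their endpoint degrees are unchanged, which closes the argument.
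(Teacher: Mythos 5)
Your proposal is correct and follows essentially the same route as the paper: isolate the edges incident to $u$ or $v$, compare their contributions before and after the switch, substitute $t=q+r+1$, and arrive at exactly the paper's identity $s(\widetilde{T})-s(T)=q(\alpha-\beta)+D_B(p-r-1)+q(D_A-D_C)$ in the non-adjacent case. You go slightly further than the paper by writing out the adjacent case explicitly (the paper dismisses it as "verified similarly"), and your formula $q(D_A-D_C)+(p-r-1)(D_B-q)$ for that case checks out.
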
	

\begin{proof}
We provide the proof for the case when $u$ and $v$ are not adjacent, i.e. $u \neq v^{-}$ and $v \neq u^{+}$ (the opposite case can be verified similarly). 
Define by $X$ (resp., $Y$) the set of edges of $T$ (resp., $\widetilde{T}$)
incident to $u$ or $v$. Let us denote by $\lambda(X)$ the
contribution to $s(T)$ from the edges of $X$. Similarly, let
$\widetilde{\lambda}(Y)$ denote the contribution to $s(\widetilde{T})$ from
the edges of $Y$. Then we have
\begin{equation}\label{kkos-equ2}
s(\widetilde{T}) - s(T) = \widetilde{\lambda}(Y) - \lambda(X).
\end{equation}

Using~\eqref{kkos-equ1} one can easily calculate
\begin{equation*}
\begin{split}
\lambda(X) &= \deg_T u\deg_T u^{+} + \deg_T v^{-}\deg_T v +
\sum\limits_{i = 1}^{p - 1} \deg_T u \deg_T a_i +
\sum\limits_{j = 1}^q \deg_T v \deg_T b_j  \\
&+ \sum\limits_{k = 1}^r \deg_T v \deg_T c_k 
= p\alpha + \beta t + pD_A + tD_B + tD_C.
\end{split}
\end{equation*}
After substituting $t = q + r + 1$, we obtain
\begin{equation}\label{kkos-equ3}
\lambda(X) = p\alpha+ \beta q + \beta(r + 1) + pD_A + qD_B + (r + 1)D_B + qD_C + (r + 1)D_C.
\end{equation}
Similarly, 
\begin{equation}\label{kkos-equ4}
\widetilde{\lambda}(Y) = p\alpha + q\alpha + \beta(r + 1) + pD_A + qD_A + pD_B + qD_B + (r + 1)D_C.
\end{equation}

Using equalities~\eqref{kkos-equ2}--\eqref{kkos-equ4} we obtain
\begin{equation*}
\begin{split}
s(\widetilde{T}) - s(T)
&= \widetilde{\lambda}(Y) - \lambda(X) = q\alpha + qD_A + pD_B - \beta q - (r + 1)D_B - qD_C \\
&= q(\alpha - \beta) + D_B(p - r - 1) + q(D_A - D_C).
\end{split}
\end{equation*}
Since $\alpha \ge \beta$ and $p \ge r + 1$, it follows that
$q(\alpha - \beta) + D_B(p - r - 1) \ge 0$. On the other hand, since $q \ge 1$ and $D_A > D_C$,
it follows that $q(D_A - D_C) > 0$ and so $q(\alpha - \beta) + D_B(p - r - 1) + q(D_A - D_C) > 0$.
Therefore, $s(\widetilde{T}) - s(T) > 0$ and so $s(\widetilde{T}) > s(T)$,
producing the desired inequality.

\end{proof}

In particular, if $B = N_T(v)\setminus \{v^{-}\}$, then the neighbor switch produces a 
tree $\widetilde{T}$ with $v$ being a leaf. In
this case the transformation $\mathcal{S}_{v \rightarrow u}^B$ will be referred to as \emph{total neighbor switch}. For such transformation, since $D_A \ge p - 1 \ge 1$ (recall $\deg_T u = p \ge 2$) and $D_C = r = 0$, we have $D_A > D_C$ and $p \ge r+1$. It implies the following corollary.

\begin{corollary}\label{kkos-corollary1}
If $\widetilde{T}$ is obtained from $T$ by a total neighbor switch $\mathcal{S}_{v \rightarrow u}^B$, and additionally $\alpha \ge \beta$ when $u$ and $v$ are not adjacent, then $s(\widetilde{T}) > s(T)$.
\end{corollary}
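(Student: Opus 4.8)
The plan is to derive this corollary as an immediate specialization of Lemma~\ref{kkos-lemma1}. Since the conclusion $s(\widetilde{T}) > s(T)$ is exactly the conclusion of that lemma, the entire task reduces to checking that the defining features of a \emph{total} neighbor switch force the three numerical hypotheses of Lemma~\ref{kkos-lemma1}---namely $p \ge r+1$, $D_A > D_C$, and (in the non-adjacent case) $\alpha \ge \beta$---to hold automatically, after which the lemma may be invoked verbatim.

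First I would unpack what ``total'' means. In a total neighbor switch we take $B = N_T(v) \setminus \{v^{-}\}$, i.e.\ $B$ exhausts the entire set that was being partitioned into $B$ and $C$. Hence $C = \emptyset$, which gives $r = |C| = 0$ and, via the definition in~\eqref{kkos-equ1}, $D_C = 0$. This single observation is the crux of the reduction, since both remaining inequalities to be verified hinge on $r = 0$.

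Next I would check the two degree-based inequalities. For $p \ge r+1$: because $r = 0$, this reduces to $p \ge 1$, and indeed $\deg_T u = p \ge 2$ holds by the standing assumption in the definition of the switch. For $D_A > D_C$: since $D_C = 0$, it suffices to show $D_A > 0$. The set $A = N_T(u) \setminus \{u^{+}\} = \{a_1,\dots,a_{p-1}\}$ has $p - 1 \ge 1$ elements (again using $p \ge 2$), and every vertex of a tree has degree at least $1$, so $D_A = \sum_{i=1}^{p-1} \deg_T a_i \ge p - 1 \ge 1 > 0$. This is precisely the chain $D_A \ge p - 1 \ge 1$ noted in the paragraph preceding the corollary. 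The third hypothesis, $\alpha \ge \beta$ in the case that $u$ and $v$ are non-adjacent, is not something to be proved here: it is carried over unchanged as an assumption in the corollary's statement, matching the corresponding hypothesis of the lemma.

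Having verified all the hypotheses, I would close by applying Lemma~\ref{kkos-lemma1} directly to conclude $s(\widetilde{T}) > s(T)$. I do not expect any genuine obstacle: the argument is a routine hypothesis check rather than a new calculation. The only point deserving a moment's care is making the implication $B = N_T(v)\setminus\{v^{-}\} \Rightarrow C = \emptyset \Rightarrow r = 0$ explicit, since it is this emptiness of $C$ that simultaneously trivializes $p \ge r+1$ and reduces $D_A > D_C$ to the easy bound $D_A \ge 1$.
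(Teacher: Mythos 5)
Your proposal is correct and matches the paper's own argument exactly: the paper likewise observes that a total switch forces $C=\emptyset$, hence $r=0$ and $D_C=0$, so that $D_A\ge p-1\ge 1>D_C$ and $p\ge 2\ge r+1$, and then invokes Lemma~\ref{kkos-lemma1}. No differences worth noting.
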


The same way we can compare trees $T$ and  $\widetilde{T} = \mathcal{S}_{v \rightarrow u}^B(T)$ in terms of $m$-metric. Since only degrees of vertices $u$ and $v$ were changed by the neighbor switch, $m(\widetilde{T}) - m(T) = \deg_{\widetilde{T}}^2 u + \deg_{\widetilde{T}}^2 v - \deg_{T}^2 u - \deg_{T}^2 v = 2q(p - r - 1)$ which proves the next lemma, since $q \ge 1$.

\begin{lemma}\label{kkos-lemma2}
Suppose that $\mathcal{S}_{v \rightarrow u}^B(T) = \widetilde{T}$ and $p > r + 1$, then $m(\widetilde{T}) > m(T)$.
\end{lemma}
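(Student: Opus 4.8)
The plan is to exploit the vertex form of the $m$-metric, namely $m(G) = \sum_{x \in V(G)} (\deg x)^2$ from~\eqref{eq:mmetric}, which makes this case considerably simpler than Lemma~\ref{kkos-lemma1}. The decisive observation is that a neighbor switch $\mathcal{S}_{v \rightarrow u}^B$ alters \emph{only} the degrees of $u$ and $v$, leaving $\deg x$ unchanged for every other vertex $x$. Consequently, all summands in $m(G)$ except the two indexed by $u$ and $v$ cancel in the difference $m(\widetilde{T}) - m(T)$, so I only need to track those two terms.

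First I would record the relevant degrees. Before the switch we have $\deg_T u = p$ and $\deg_T v = t = q + r + 1$, the latter because $N_T(v) \setminus \{v^{-}\}$ splits into $B$ (of size $q$) and $C$ (of size $r$), together with $v^{-}$. After the switch the $q$ neighbors in $B$ are moved from $v$ to $u$, giving $\deg_{\widetilde{T}} u = p + q$ and $\deg_{\widetilde{T}} v = r + 1$. Substituting these into the vertex form yields
\begin{equation*}
m(\widetilde{T}) - m(T) = (p + q)^2 + (r + 1)^2 - p^2 - (q + r + 1)^2.
\end{equation*}

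Expanding and collecting terms, the quadratic contributions cancel and the expression simplifies to $2q(p - r - 1)$. Since $B \neq \emptyset$ forces $q \geq 1$, and the hypothesis $p > r + 1$ gives $p - r - 1 \geq 1 > 0$, the product $2q(p - r - 1)$ is strictly positive, whence $m(\widetilde{T}) > m(T)$. I do not anticipate any genuine obstacle: unlike the $s$-metric, whose edge-product form forces one to account for the degrees $\alpha, \beta$ of the neighbors $u^{+}, v^{-}$ and hence to impose the extra hypotheses appearing in Lemma~\ref{kkos-lemma1}, the square-of-degree form localizes the entire change to $u$ and $v$, so a single algebraic simplification settles the claim.
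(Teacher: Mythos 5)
Your proposal is correct and follows exactly the paper's own argument: since the switch changes only the degrees of $u$ and $v$, the vertex form of the $m$-metric gives $m(\widetilde{T}) - m(T) = (p+q)^2 + (r+1)^2 - p^2 - (q+r+1)^2 = 2q(p-r-1) > 0$ under the hypotheses $q \ge 1$ and $p > r+1$. Nothing further is needed.
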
	

For further results we need weaker modifications of Lemmas \ref{kkos-lemma1} and \ref{kkos-lemma2} for the case $\deg_T u = p \ge 1$ (and therefore $D_A \ge 0$). Recall $\deg_T v = t \ge 2$ since we still require at least one vertex to switch.

\begin{lemma}\label{kkos-corollary2}
Suppose $\widetilde{T}$ is obtained from $T$ by a total neighbor switch $\mathcal{S}_{v \rightarrow u}^B$,  then the following propositions hold:
\begin{itemize}
\item[a)] $m(\widetilde{T}) \ge m(T)$;
\item[b)] $s(\widetilde{T}) \ge s(T)$, if additionally $\alpha \ge \beta$ when $u$  and $v$  are not adjacent.
\end{itemize}
\end{lemma}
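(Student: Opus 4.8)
The plan is to reuse the explicit difference formulas already established in the proofs of Lemmas~\ref{kkos-lemma1} and~\ref{kkos-lemma2}, now specialized to a \emph{total} neighbor switch (so that $C=\emptyset$, hence $r=0$ and $D_C=0$) and to verify that the weakened hypothesis $p\ge 1$ (with $D_A\ge 0$) still forces nonnegativity rather than strict positivity. Since $m$ depends only on the degree sequence, part a) should require no case distinction, whereas for part b) I expect the adjacent case to need a short fresh computation, because the formula of Lemma~\ref{kkos-lemma1} was derived under the assumption that $u$ and $v$ are not adjacent and treats $uu^{+}$ and $v^{-}v$ as distinct edges.

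For part a) I would start from the identity $m(\widetilde{T})-m(T)=2q(p-r-1)$ obtained just before Lemma~\ref{kkos-lemma2}. This identity records only the change in $\deg u$ and $\deg v$ and hence holds whether or not $u$ and $v$ are adjacent. Substituting $r=0$ gives $m(\widetilde{T})-m(T)=2q(p-1)$, which is nonnegative because $q\ge 1$ and now $p\ge 1$. This immediately yields $m(\widetilde{T})\ge m(T)$.

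For part b) in the non-adjacent case I would take the closing formula from the proof of Lemma~\ref{kkos-lemma1}, namely $s(\widetilde{T})-s(T)=q(\alpha-\beta)+D_B(p-r-1)+q(D_A-D_C)$, and substitute $r=0$, $D_C=0$ to obtain $s(\widetilde{T})-s(T)=q(\alpha-\beta)+D_B(p-1)+qD_A$. Each summand is nonnegative: the first by the hypothesis $\alpha\ge\beta$, the second since $p-1\ge 0$ and $D_B\ge 0$, and the third since $D_A\ge 0$ and $q\ge 1$. For the adjacent case ($u=v^{-}$, $v=u^{+}$) I would recompute $\lambda(X)$ and $\widetilde{\lambda}(Y)$, counting the single edge $uv$ once. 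A direct calculation then gives, for a total switch, $s(\widetilde{T})-s(T)=qD_A+(p-1)(D_B-q)$. Both terms are nonnegative: $qD_A\ge 0$, and $(p-1)(D_B-q)\ge 0$ because $p\ge 1$ and $D_B=\sum_{j}\deg_T b_j\ge q$. Thus $s(\widetilde{T})\ge s(T)$ holds with no condition on $\alpha,\beta$, exactly as the statement requires.

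The only genuine obstacle is this adjacent subcase of part b): one cannot simply set $\alpha=t$ and $\beta=p$ in the non-adjacent formula, since that double counts the shared edge $uv$ and introduces a spurious $q^2$ term. Once the adjacent case is computed honestly, the grouping $s(\widetilde{T})-s(T)=qD_A+(p-1)(D_B-q)$ makes the nonnegativity transparent and simultaneously explains structurally why the extra hypothesis $\alpha\ge\beta$ is needed only when $u$ and $v$ are not adjacent.
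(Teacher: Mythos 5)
Your proposal is correct and follows essentially the same route as the paper, which states this lemma without a separate proof precisely because it is the computation of Lemmas~\ref{kkos-lemma1} and~\ref{kkos-lemma2} rerun with $r=0$, $D_C=0$ and the weakened bounds $p\ge 1$, $D_A\ge 0$. Your explicit treatment of the adjacent case, yielding $s(\widetilde{T})-s(T)=qD_A+(p-1)(D_B-q)$ with $D_B\ge q$, correctly fills in the subcase the paper dismisses with ``can be verified similarly.''
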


\section{Complexity and approximability results}
\label{scale-free:sec5}

In this section we study computational complexity of \textsc{$m$-SF Spanning Tree} and \textsc{$s$-SF Spanning Tree} problems. First we establish APX-hardness and NP-hardness of \textsc{$m$-SF Spanning Tree} and \textsc{$s$-SF Spanning Tree} respectively for cubic graphs. The rest of the section is dedicated to proving NP-hardness of both problems for bipartite graphs.

The following known fact will be used:

\begin{theorem}[\cite{kleitman1991spanning}]
\label{thm:mindegleaves}
Any connected graph of order $n$ with minimum vertex degree at least $3$ has a spanning tree with at least $n/4 + 2$ leaves.
\end{theorem}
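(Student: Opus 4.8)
Since this is the classical Kleitman--West theorem (already cited as Theorem~\ref{thm:mindegleaves}), I would simply reproduce the standard \emph{leaf-expansion} argument, which I find the most transparent route to the exact constant $n/4+2$. The plan is to grow a subtree $T$ of $G$ one vertex at a time while tracking the potential $\Phi(T) = 4\ell(T) - |V(T)|$, where $\ell(T)$ denotes the number of leaves of the current tree $T$. The process terminates with a spanning tree: vertices are only ever added, so an already-expanded internal vertex keeps all of its neighbors inside $T$, and hence the only vertices with a neighbor outside $T$ are the \emph{alive} leaves (those still having a neighbor in $G$ outside $T$); by connectedness, once no alive leaves remain, $T$ must contain all of $V(G)$. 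It therefore suffices to start with $\Phi \ge 8$ and to show $\Phi$ never drops below $8$, since at the end $|V(T)| = n$ gives $4\ell(T) - n \ge 8$, that is, $\ell(T) \ge n/4 + 2$.

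The elementary move is an \emph{expansion}: to expand an alive leaf $x$, attach to $x$ all of its $G$-neighbors not yet in $T$, turning them into new leaves and turning $x$ into an internal vertex (no cycle is created, so $T$ stays a tree). To initialize, pick any vertex $v$ with $\deg_G v \ge 3$ and expand it; then $T$ has one internal vertex and at least three leaves, so $\Phi(T) \ge 4\cdot 3 - 4 = 8$. If an expansion brings in $k$ new vertices, then $\ell(T)$ changes by $k-1$ and $|V(T)|$ by $k$, so the net change is $\Delta\Phi = 3k-4$. Thus every \emph{fat} expansion with $k \ge 2$ strictly increases $\Phi$, and I would always give such expansions priority.

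The one dangerous move is a \emph{thin} expansion, $k = 1$, which costs $\Delta\Phi = -1$; the crux of the proof is to show that thin expansions can always be amortized against a subsequent fat one using the minimum-degree-$3$ hypothesis. Such a move is forced only when every alive leaf has exactly one neighbor outside $T$. In that situation I would expand such a leaf $x$ to pull in its unique outside neighbor $y$, and then immediately expand $y$: because $\deg_G y \ge 3$, the vertex $y$ has at least two further neighbors, and analyzing how many of them already lie in $T$ shows that this forced two-step block has nonnegative net effect on $\Phi$. Making this local accounting airtight --- in particular ruling out or otherwise compensating the degenerate sub-cases in which all of $y$'s extra neighbors already lie in $T$ (so that $y$ becomes a dead leaf) --- is the genuine technical heart of the argument and the step I expect to be the main obstacle. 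It is precisely here that the degree-$3$ condition is indispensable and that the sharp constant $1/4$ is pinned down. Combining the invariant $\Phi \ge 8$ maintained across these grouped moves with the terminal identity $|V(T)| = n$ yields the claimed bound.
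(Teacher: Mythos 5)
The paper does not actually prove this statement; it imports it verbatim from Kleitman and West \cite{kleitman1991spanning}, so there is no internal proof to compare against --- only the classical argument you are trying to reconstruct. Your reconstruction has the right skeleton (grow $T$ by leaf expansions, track a linear potential in the number of leaves and vertices), but it stops exactly at the step that carries all the difficulty, and you say so yourself. That admitted gap is a real one, not a routine verification: with the potential $\Phi = 4\ell(T) - |V(T)|$ the ``thin expansion followed by a fat expansion'' amortization genuinely fails. After you pull in the unique outside neighbor $y$ of a live leaf $x$ (costing $\Delta\Phi = -1$), the hypothesis $\deg_G y \ge 3$ only tells you $y$ has at least two further neighbors; if all of them already lie in $T$, then $y$ is a dead leaf and there is no subsequent fat expansion to charge the $-1$ to, and if exactly one lies outside $T$, the only available follow-up at $y$ is another thin expansion costing a further $-1$. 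So $\Phi$ can decrease without bound along a chain of such forced moves, and the invariant $\Phi \ge 8$ is simply false for this potential.

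The missing idea is that a \emph{dead} leaf (one with no neighbors outside $T$) is guaranteed to remain a leaf of the final spanning tree, so it should be worth more than a live leaf. Kleitman and West's accounting amounts to using a potential of the form $\Phi = 4\ell_{\mathrm{dead}} + 3\ell_{\mathrm{live}} - |V(T)|$ (which coincides with $4\ell - |V(T)|$ once the tree is spanning and every leaf is dead). Under this potential the bad cases turn benign: in your degenerate sub-case, the $j \ge 2$ neighbors of $y$ inside $T$ other than $x$ must themselves be live leaves whose unique outside neighbor is $y$, so adding $y$ converts each of them from live to dead, gaining $+1$ apiece, and $y$ enters as a dead leaf worth $4$; the move nets $-3 + 4 - 1 + j \ge +2$. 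A similar check shows every other expansion type is non-decreasing, and a slightly more careful look at the initial and terminal steps recovers the additive constant $2$. Without this re-weighting (or an equivalent device), the argument as you have set it up does not close.
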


Further let $G$ be a cubic graph on $n$ vertices and $T$ be a spanning tree with $\ell = \ell(T)$ leaves and $n_i = n_i(T)$ vertices of degree $i$, $i \in \{2, 3\}$. Then  
\begin{equation}
\label{eq:mcubic}
m(T) = \ell + 4n_2 + 9n_3,
\end{equation}
with the numbers $n_i$ satisfying the equalities $\ell + n_2 + n_3 = n$ and $\ell + 2n_2 + 3n_3 = 2(n-1)$.

Deriving $n_2$ and $n_3$ from these equalities gives us
\begin{equation}\label{eq:degdistrcubic}
n_2 = n + 2 - 2\ell,\qquad n_3 = \ell - 2. 
\end{equation}
After substituting these expressions into~\eqref{eq:mcubic} we get 
\begin{equation}\label{eq:mmetriccubic}
m(T) = 2\ell + 4n - 10. 
\end{equation}
Thus, finding a spanning tree with maximum $m$-metric in this case is equivalent to finding the spanning tree with the maximum number of leaves which is a known NP-hard Maximum Leaf Spanning Tree problem \cite{garey2002computers}, abbreviated as \textsc{MaxLeaf}.

\textsc{MaxLeaf}

\emph{Given}:  A connected graph $G$.

\emph{Find}: A spanning tree $T$ of $G$ with the maximum number of leaves $\ell(T)$.
\medskip

The \textsc{MaxLeaf} problem has been extensively studied. The main  results include its NP-hardness in a number of graph classes and approximability within a constant factor in general (see e.g. \cite{galbiati1994short,griggs1989spanning,lu1998approximating,reich2016complexity}). For cubic graphs this problem is known to be APX-hard~\cite{bonsma2012max}, which we exploit to prove APX-hardness of \textsc{$m$-SF Spanning Tree} by providing an L-reduction  \cite{papadimitriou91} from \textsc{MaxLeaf}. 

Given an optimization problem $P$ and an instance $I$ of this problem, we use $opt_P(I)$ to denote the optimum value of $I$, and $val_P(I, S)$ to denote the value of a feasible solution $S$ of instance $I$. Let $A$ and $B$ be two optimization problems.  Then $A$ is said to be L-reducible to $B$ if there exist polynomial-time computable functions $f$, $g$ and two constants $\alpha, \beta > 0$ such that 

\begin{itemize}
\item[(L1)] $f$ maps an instance $I$ of $A$ to an instance $f(I)$ of $B$ such that $opt_B (f(I)) \le \alpha \cdot opt_A(I)$ for all instances $I$ of $A$;
\item[(L2)] $g$ maps for any instance $I$ of $A$ a solution $S'$ for instance $f(I)$ of $B$ to a solution $S$ for $I$ such that $|val_A (I, S) - opt_A (I)| \le \beta \cdot |val_B (f (I), S') - opt_B (f(I))| $.
\end{itemize}

Let $T^{mopt}$ be an $m$-optimal spanning tree of $G$ and $\ell^*$ be the maximum number of leaves in spanning trees of $G$. Note $\ell^* \ge n/4 + 2$ by Theorem \ref{thm:mindegleaves} and therefore $n \le 4\ell^* - 8$. Then using~\eqref{eq:mmetriccubic} we get
$$
\tau_1(G) = m(T^{mopt}) \le 2 \ell (T^{mopt}) + 4n - 10 \le 2 \ell^* + 16\ell^* - 32 \le 18 \ell^*.
$$
Moreover, for every spanning tree $T$ of $G$ we have $\frac{1}{2} |m(T) - m(T^{mopt})| = |\ell(T) - \ell^*|$. As a result,~\eqref{eq:mmetriccubic} implies an L-reduction with identity mappings $f$ and $g$ and constants $\alpha = 18$ and $\beta = \frac{1}{2}$, proving the next theorem. 

\begin{theorem}\label{thm:apxhard}
The \textsc{$m$-SF Spanning Tree} problem is $\mathrm{APX}$-hard for cubic graphs.
\end{theorem}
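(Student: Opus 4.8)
The plan is to prove APX-hardness through an L-reduction from \textsc{MaxLeaf} restricted to cubic graphs, which is APX-hard by~\cite{bonsma2012max}. The structural fact that makes this work is already recorded in~\eqref{eq:mmetriccubic}: for a cubic graph $G$ on $n$ vertices, every spanning tree $T$ satisfies $m(T) = 2\ell(T) + 4n - 10$, an affine function of the leaf count $\ell(T)$ with a constant term that does not depend on $T$. Consequently, maximizing $m(T)$ and maximizing $\ell(T)$ over the spanning trees of a fixed cubic $G$ are literally the same optimization problem, so I would take both reduction maps $f$ and $g$ to be the identity: an instance $G$ of \textsc{MaxLeaf} maps to the same graph $G$ viewed as an instance of \textsc{$m$-SF Spanning Tree}, and a spanning tree returned for one problem is returned unchanged for the other. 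Both maps are trivially polynomial-time computable.

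With $f$ the identity, condition~(L1) asks for a constant $\alpha$ with $\tau_1(G) \le \alpha\,\ell^*$, where $\ell^*$ is the maximum number of leaves over spanning trees of $G$. Here the cubic (minimum-degree-$3$) hypothesis is essential: Theorem~\ref{thm:mindegleaves} gives $\ell^* \ge n/4 + 2$, i.e.\ $n \le 4\ell^* - 8$. Since the $m$-optimal tree $T^{mopt}$ has at most $\ell^*$ leaves, \eqref{eq:mmetriccubic} yields $\tau_1(G) = m(T^{mopt}) \le 2\ell^* + 4n - 10$, and substituting the leaf bound gives $\tau_1(G) \le 18\ell^*$, so~(L1) holds with $\alpha = 18$. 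I expect this to be the only genuinely content-bearing step, and thus the main (if mild) obstacle: without the Kleitman-type guarantee of Theorem~\ref{thm:mindegleaves} there would be no way to control the order $n$ linearly in terms of $\ell^*$, and the additive $4n$ term in the $m$-metric would block any constant-factor bound.

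For condition~(L2), the common constant term $4n - 10$ in $m(T) = 2\ell(T) + 4n - 10$ cancels in differences, so $|m(T) - m(T^{mopt})| = 2\,|\ell(T) - \ell^*|$, equivalently $|\ell(T) - \ell^*| = \tfrac{1}{2}\,|m(T) - m(T^{mopt})|$. Because $g$ is the identity, the solution $S'$ for the \textsc{$m$-SF} instance is the same tree returned as the solution $S$ for the \textsc{MaxLeaf} instance, so this equality is exactly~(L2) with $\beta = \tfrac{1}{2}$ (in fact with equality rather than mere inequality). Having verified~(L1) and~(L2) with polynomial-time maps and constants $\alpha = 18$, $\beta = \tfrac{1}{2}$, I would conclude that \textsc{MaxLeaf} on cubic graphs L-reduces to \textsc{$m$-SF Spanning Tree} on cubic graphs, which transfers APX-hardness and establishes the theorem.
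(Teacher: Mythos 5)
Your proposal is correct and follows essentially the same route as the paper: the identity L-reduction from \textsc{MaxLeaf} on cubic graphs, using the affine relation $m(T) = 2\ell(T) + 4n - 10$ together with the Kleitman-type bound $\ell^* \ge n/4 + 2$ to obtain $\alpha = 18$, and the cancellation of the constant term to obtain $\beta = \tfrac{1}{2}$. The constants and the identification of Theorem~\ref{thm:mindegleaves} as the essential ingredient match the paper's proof exactly.
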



 
Next we consider the \textsc{$s$-SF Spanning Tree} problem for cubic graphs. As above, let $G$ be a cubic graph on $n$ vertices and $T$ be a spanning tree of $G$.
\begin{theorem}
The \textsc{$s$-SF Spanning Tree} problem is $\mathrm{NP}$-hard for cubic graphs.
\end{theorem}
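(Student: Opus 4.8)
The plan is to reduce from \textsc{MaxLeaf} on cubic graphs, which is NP-hard (indeed APX-hard~\cite{bonsma2012max}). The first step is to obtain, in analogy with~\eqref{eq:mmetriccubic}, a usable closed form for $s(T)$. By Proposition~\ref{kkos-lemma3}, $s(T)=\gamma_3(T)+2\gamma_2(T)+\gamma_1(T)$; since $T$ is acyclic every trail is a path, so $\gamma_1(T)=n-1$ and $\gamma_2(T)=\sum_{v}\binom{\deg_T v}{2}$. Substituting the degree distribution~\eqref{eq:degdistrcubic} yields $\gamma_2(T)=n+\ell-4$, and hence
\begin{equation*}
s(T)=\gamma_3(T)+2\ell(T)+3n-9,\qquad \gamma_3(T)=\sum_{uv\in E(T)}(\deg_T u-1)(\deg_T v-1),
\end{equation*}
where $\gamma_3(T)$ is the number of paths with three edges (each counted once via its middle edge).

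The essential difficulty, and the reason the clean L-reduction used for \textsc{$m$-SF Spanning Tree} does not carry over, is already visible: whereas $m(T)=2\ell(T)+4n-10$ is a function of the leaf count alone, $s(T)$ carries the extra term $\gamma_3(T)$, which depends on how the internal vertices of degrees $2$ and $3$ are \emph{arranged}, not merely on their number. Thus maximizing $s$ is a joint optimization of $\ell(T)$ and $\gamma_3(T)$, and these are genuinely in tension: producing more leaves forces the $n-1-\ell$ internal edges to shrink, leaving fewer edges available to form $3$-paths. In particular an $s$-optimal tree need not be a maximum-leaf tree, so the identity map cannot serve as the reduction.

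To overcome this I would not reduce from the identity instance but transform a cubic \textsc{MaxLeaf} instance $G$ into a cubic graph $G'$ in which the $3$-path contribution is neutralized. Concretely, I would attach to $G$ small cubic gadgets engineered so that, after normalization, every spanning tree routes its degree-$3$ vertices through a rigid backbone, and each leaf-creating choice in $G$ contributes exactly one leaf \emph{together with a fixed number $c$ of $3$-paths}. Then $\gamma_3(T)=\gamma_3^{\mathrm{base}}+c\cdot k$ and $\ell(T)=\ell^{\mathrm{base}}+k$, where $k$ is the number of active choices, so $s(T)=\mathrm{const}+(c+2)\,k$ becomes an affine increasing function of $k$. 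Maximizing $s$ then coincides with maximizing $k$, which encodes the maximum leaf number of $G$, giving the back-map $g$ and the L-reduction constants. Corollary~\ref{kkos-corollary1} and Lemma~\ref{kkos-lemma1} would be invoked to show that an $s$-optimal tree of $G'$ may be assumed to be in this canonical form without decreasing $s$, so that $k$ (hence a maximum-leaf tree of $G$) can be read off from it.

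The main obstacle is precisely this control of $\gamma_3$ while preserving cubicity. Because $2\ell$ and $\gamma_3$ are both $\Theta(n)$, one cannot make the leaf term dominate by sheer magnitude; the gadgets must instead \emph{lock} each additional leaf to a fixed $3$-path contribution, so that the optimal trade-off in the $s$-objective is attained only at spanning trees of maximum leaf number. Verifying this alignment---ruling out trees that sacrifice a few leaves to gain many $3$-paths---is the technical heart of the proof, and is exactly where the neighbor-switching inequalities of Section~\ref{switch:sec2.5} are expected to do the decisive work.
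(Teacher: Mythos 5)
Your closed form $s(T)=\gamma_3(T)+2\ell(T)+3n-9$ for spanning trees of cubic graphs is correct, and you have accurately diagnosed why the L-reduction used for the $m$-metric does not transfer. But the proof itself is missing: everything after that diagnosis is a plan, not an argument. You never construct the cubic gadgets, never define the map $f$, and never prove the central structural claim that every leaf-creating choice can be ``locked'' to a fixed $3$-path contribution $c$ so that $s(T)=\mathrm{const}+(c+2)k$. You yourself identify ruling out trees that trade leaves for $3$-paths as ``the technical heart of the proof,'' and that heart is absent. Note also that attaching gadgets to a cubic graph while keeping it cubic is delicate (you cannot simply hang a gadget off an existing degree-$3$ vertex), so even the feasibility of the construction is unestablished. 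As written, this would not be accepted as a proof of NP-hardness.

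The paper avoids your difficulty entirely by reducing from a different NP-complete problem: Lemke's problem of deciding whether a connected cubic graph has a spanning tree with \emph{no vertices of degree $2$}. The key step (Claim~\ref{claim:cubic2}) is that for $n\ge 4$ even, every spanning tree $T$ of a cubic graph satisfies $s(T)\le 6n-15$, with equality if and only if $T$ has no degree-$2$ vertices; the inequality is proved by total neighbor switches (Corollary~\ref{kkos-corollary1}) that merge pairs of degree-$2$ vertices while strictly increasing $s$. Hence $\tau_2(G)\ge 6n-15$ decides Lemke's problem, and no gadgets or leaf/$3$-path bookkeeping are needed. Your own formula confirms the threshold: a tree with no degree-$2$ vertices has $\ell=(n+2)/2$ and $\gamma_3=2n-8$, giving exactly $6n-15$. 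If you want to salvage your approach, the lesson is to target a decision problem whose ``yes'' instances correspond to a single extremal value of $s$ rather than to an optimization over a genuinely two-dimensional trade-off.
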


\begin{proof}
For the reduction, we will use the following problem proved to be NP-complete in~\cite{lemke1988maximum}:

\emph{Instance:} A connected cubic graph $G$.

\emph{Question:} Is there a spanning tree of $G$ without vertices of degree 2?
\medskip

According to~\eqref{eq:degdistrcubic}, $n_2 = n_2(T) = n + 2 - 2\ell(T)$. Thus the answer for the problem's question is negative if $n$ is odd. Hence we will concentrate only on the case when $n \ge 4$ is even, in which case $n_2$ is also even. We will show that among all $n$-vertex trees $T$ ($n \geq 4$ is even) with $\Delta(T) \leq 3$ the trees without vertices of degree 2 have the highest $s$-metric. Indeed,  the following claim holds:

\begin{claim}\label{claim:cubic2}
If $\Delta(T) \leq 3$ and $n \ge 4$ is even, then $s(T)\leq 6n - 15$. The equality holds if and only if $T$ has no vertices of degree $2$. 
\end{claim}
\begin{proof}
If $T$ has no vertices of degree 2, then~\eqref{eq:degdistrcubic} implies that $\ell = \ell(T) = \frac{n+2}{2}$. Furthermore, $s(T) = 3m_1 + 9m_3$, where $m_1$ is the number of pendant edges and $m_3$ is the number of edges with both ends of degree 3. Obviously, $m_1 = \ell$ and $m_3 = n - 1 - \ell$, thus yielding $s(T) = 6n - 15$.

Now suppose that $T$ has $n_2\geq 2$ vertices of degree $2$. Let $u$ and $v$ be two vertices of degree 2 lying on a path $P_T(u,v)$. Without loss of generality we may assume $\deg_T u^{+} \ge \deg_T v^{-}$. By iteratively repeating a total neighbor switch $\mathcal{S}_{v \rightarrow u}^B$ for all pairs of vertices $u$ and $v$ of degree 2, we will obtain a tree with higher $s$-metric (due to Corollary \ref{kkos-corollary1}) and without vertices of degree 2. This proves the claim.
\end{proof}

According to Claim~\ref{claim:cubic2}, $\tau_2(G) \leq 6n - 15$ for $n \ge 4$ is even, holds if and only if $G$ has a spanning tree without vertices of degree 2. This observation concludes the proof.
\end{proof}

Note that for cubic graphs, \textsc{$m$-SF Spanning Tree} and \textsc{$s$-SF Spanning Tree} problems are obviously approximable within a constant factor. The above claims allow to refine the approximation factors. In particular, the upper bound from Claim \ref{claim:cubic2} and the lower bound from Corollary \ref{kkos-theorem3} imply the existence of $\frac{3}{2}$-approximation for the \textsc{$s$-SF Spanning Tree} problem.

We proceed by proving that the scale-free spanning tree problems are NP-hard for bipartite graphs. We present a polynomial-time reduction from the \textsc{3-Dimensional Matching} problem, abbreviated as \textsc{3-DM}~\cite{garey2002computers}.

\textsc{3-DM}

\emph{Instance}: Pairwise disjoint sets $X$, $Y$, $Z$ each of cardinality $n$, and a collection $\mathcal{M}$ of $m$ three-element sets, where each member of $\mathcal{M}$ includes exactly one element from each of $X$, $Y$, and $Z$.

\emph{Question}: Is there a set of pairwise disjoint members of
$\mathcal{M}$, whose union is $X \cup Y \cup Z$?
\medskip

A set of pairwise disjoint members of $\mathcal{M}$, whose union is $X \cup Y \cup Z$, will be called a \emph{perfect $3$-dimensional matching}. Let $Q = (X, Y, Z, \mathcal{M})$ be an instance of 3-DM. For this instance we will construct a graph $G = G_Q$ on $3n + m + 1$
vertices as follows. The vertex set of $G$
consists of the disjoint union $\{r\} \cup A \cup B$ with the special root vertex
$r$, $A = \mathcal{M}$, and $B = X \cup Y \cup Z$. We introduce all the
edges $ra$ with $a \in A$ as well as, for each $a = M \in A$, the three
edges $ax$, $ay$, and $az$ where $M = \{x, y, z\}$. It is clear if $G$ is not
connected, then $\mathcal{M}$ contains no perfect $3$-dimensional matching. Therefore further we assume that $G$ is connected. Note also that $G$ is
bipartite graph with the parts $A$ and $\{r\} \cup B$. An example construction of $G$ is shown in Fig.~\ref{kkos-figure4}.

\begin{figure}[h]
\centering
\includegraphics[width=9cm]{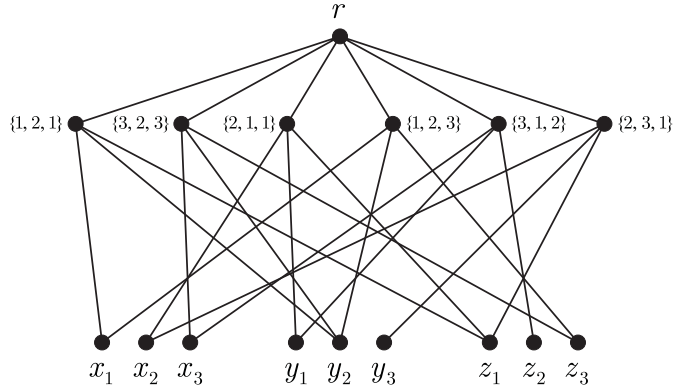}
\caption{An example of the graph $G$ for $n = 3$, $X = \{x_1, x_2, x_3\}$,
$Y = \{y_1, y_2, y_3\}$, $Z = \{z_1, z_2, z_3\}$, and
$\mathcal{M} = \{\{x_1, y_2, z_1\}, \{x_3, y_2, z_3\}, \{x_2, y_1, z_1\},
\{x_1, y_2, z_3\},\{x_3, y_1, z_2\}$, $\{x_2, y_3, z_1\}\}$. Here each vertex labelled $\{p, q, r\}$ represents a set $\{x_p, y_q, z_r\}$.}
\label{kkos-figure4}
\end{figure}

For a vertex $v$ of $G$ and a subset $W \subseteq V(G)$ let us
denote by $(v : W)$ the set of all edges connecting $v$ to vertices in $W$. 

\begin{lemma}\label{kkos-claim5}
There are a spanning trees $T_1$ and $T_2$ in $G$, both containing all edges of $(r : A)$, with $m(T_1) = \tau_1(G)$ and $s(T_2) = \tau_2(G)$.
\end{lemma}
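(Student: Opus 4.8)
The plan is to prove the stronger statement that \emph{every} spanning tree $T$ of $G$ can be converted into a spanning tree that contains all edges of $(r:A)$ without decreasing either $m$ or $s$. Granting this, one obtains $T_1$ by applying the conversion to an $m$-optimal tree $T^{\mathrm{mopt}}$ and $T_2$ by applying it to an $s$-optimal tree $T^{\mathrm{sopt}}$: the relevant metric never decreases along the conversion yet cannot exceed $\tau_1(G)$ (resp.\ $\tau_2(G)$), so the produced trees remain optimal while acquiring all edges of $(r:A)$.

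To describe the conversion, root $T$ at $r$. Since $G$ is bipartite with parts $A$ and $\{r\}\cup B$, every vertex of $A$ occurs at an odd depth and every vertex of $B$ at an even depth at least $2$. An edge $ra$ lies in $T$ precisely when $a$ has depth $1$, so $T$ contains all of $(r:A)$ if and only if no vertex of $A$ has depth $\ge 3$. As long as some $a\in A$ has depth $\ge 3$, the path $P_T(r,a)$ passes through a vertex $b\in B$ of depth $2$ that has at least one child; I then perform the total neighbor switch from $b$ onto $r$, deleting every tree edge joining $b$ to its children and reattaching those children to $r$. Each such child lies in $A$ and is therefore adjacent to $r$ in $G$, so every added edge belongs to $G$ and the result is again a spanning tree of $G$.

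It remains to see that such a switch moves neither metric in the wrong direction. Part~(a) of Lemma~\ref{kkos-corollary2} gives $m(\widetilde T)\ge m(T)$ with no extra hypothesis. For the $s$-metric I use part~(b): here $u=r$ and $v=b$ are non-adjacent (they lie in different parts of the bipartition), so the hypothesis to check is $\alpha\ge\beta$, that is $\deg_T r^{+}\ge \deg_T b^{-}$. This is exactly why I insist that $b$ have depth $2$: then the neighbor of $r$ on $P_T(r,b)$ and the parent of $b$ both coincide with the unique depth-$1$ vertex $a'$ on that path, whence $r^{+}=b^{-}=a'$ and $\alpha=\beta$. Consequently $s(\widetilde T)\ge s(T)$ as well, and both metrics are non-decreasing at every step.

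Finally, the procedure terminates: a switch moves the children of $b$ (at least one, of depth $3$) together with their subtrees up by two levels, turns $b$ into a depth-$2$ leaf, and lowers the depth of no vertex, so the sum of the depths of the vertices of $A$ strictly decreases. Being a non-negative integer, this sum can drop only finitely often, and when the procedure halts no vertex of $A$ has depth $\ge 3$, i.e.\ the current tree contains all of $(r:A)$. The only genuinely delicate point is meeting the hypothesis $\alpha\ge\beta$ of Lemma~\ref{kkos-corollary2}(b); confining the switches to depth-$2$ vertices $b$ forces $\alpha=\beta$ and lets a single conversion serve both the $m$- and the $s$-metric at once.
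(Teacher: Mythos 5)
Your proof is correct and uses essentially the same mechanism as the paper: a total neighbor switch moving the children of a depth-$2$ vertex $b\in B$ onto $r$, with Lemma~\ref{kkos-corollary2} applicable because $r^{+}=b^{-}$ forces $\alpha=\beta$. The only difference is packaging — you iterate with a depth-sum potential to show termination, whereas the paper takes an optimal tree maximizing the number of $(r:A)$-edges and derives a one-step contradiction; also note the slip ``lowers the depth of no vertex'' should read ``raises the depth of no vertex.''
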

	
\begin{proof}
We provide proof for $s(T_2) = \tau_2(G)$ only. The equality $m(T_1) = \tau_1(G)$ can be shown similarly. Among the spanning trees $T$ of $G$ with $s(T) = \tau_2(G)$, let $T_2^*$ be one that has the maximum number of edges from $(r : A)$. We claim
that $T_2^*$ contains all edges from $(r : A)$.
		
Suppose for a contradiction that the set $C \subseteq A$ of all vertices
that are adjacent to $r$ in $T_2^*$ is not equal to $A$. Then there would be a
vertex $b \in B$ adjacent in $T_2^*$ to some vertex $c$ in $C$, for which the
set $D$ of neighbors of $b$ in $T_2^*$ that are contained in $A \setminus C$
is non-empty.  By Lemma \ref{kkos-corollary2}, since $\deg_{T_2^*} r^{+} = \deg_{T_2^*} b^{-} = \deg_{T_2^*} c$ and $\deg_{T_2^*} r \ge 1$,  we can construct a spanning tree $T_2'$ from $T_2^*$ applying total neighbor switch $\mathcal{S}_{b \rightarrow r}^B$ with $s(T_2') \ge s(T_2^*)$ and the root $r$ having more neighbors in $T_2'$ than it has in $T_2^*$. 


\end{proof}
		
Any spanning tree $T$ of $G$ containing all edges of $(r : A)$ has $m + 3n$
paths of length one, $3n(m - 1)$ paths of length three (each of
the $3n$ edges of the tree connecting $A$ and $B$ induces exactly $m - 1$
such paths), and $m(m - 1)/2 + 3n$ paths of length two that are not formed by a pair of edges between $A$ and $B$. There are $3\delta_4 + \delta_3$
remaining paths of length two, where $\delta_i$ is the number of vertices in
$A$ that have degree $i$ in the tree. Indeed, a vertex $v\in A$ with
$j \in \{0, 1, 2, 3\}$ neighbors from $B$ in the tree contributes no such
path in case of $j \in \{0, 1\}$, one such path in case of $j = 2$, and
three such paths in case of $j = 3$. Thus by Proposition \ref{kkos-lemma3}
$$m(G) = m^2 + m + 12n + 6\delta_4 + 2\delta_3, \qquad s(T) = m^2 + 3mn + 6n + 6\delta_4 + 2\delta_3.$$
Since $|B| = 3n$, we have $3\delta_4 + 2\delta_3 \le 3n$ and $6\delta_4 + 2\delta_3 \le 6\delta_4 + 4\delta_3 \le 6n$. Hence,
$6\delta_4 + 2\delta_3 \le 6n$ with equality holding if and only if $\delta_3 = 0$ and $\delta_4 = n$.
		
A perfect 3-dimensional matching $\mathcal{M^*} = \{M_1, \ldots, M_n\}$ induces a spanning tree
$T_{\mathcal{M^*}}$ that contains all edges from $(r : A)$ and edges $ax, ay, az$ for each
$a = \{x, y, z\} \in \mathcal{M^*}$.
Fig.~\ref{kkos-figure4}). 
For this tree we have  $\delta_4 = n$ and
$$m(T) = m^2 + m + 18n := t_1(n, m), \qquad s(T) = m^2 + 3mn + 12n =: t_2(n, m).$$


Conversely, every spanning tree of $G$ that contains all edges from
$(r : A)$ and has $m$-metric equal to $t_1(n, m)$ or $s$-metric equal to $t_2(n, m)$ (and thus $\delta_4 = n$) arises from a
perfect 3-dimensional matching.
	
By Lemma~\ref{kkos-claim5}, the graph $G$ satisfies
$\tau_1(G) \ge t_1(n, m)$ (resp. $\tau_2(G) \ge t_2(n, m)$) if and only if there is a spanning tree $T$ of $G$
that contains all edges from $(r : A)$ and whose $m$-metric (resp. $s$-metric) is equal to 
$t_1(n, m)$ (resp. $t_2(n, m))$. The latter is true if and only if the instance $Q$ of \textsc{3-DM} has a perfect 3-dimensional matching. We have
established the following hardness result:

\begin{theorem}
\label{kkos-theorem7}	
The \textsc{$m$-SF Spanning Tree} and \textsc{$s$-SF Spanning Tree} problems are $\mathrm{NP}$-hard for bipartite graphs.
\end{theorem}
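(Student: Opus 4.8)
The plan is to establish NP-hardness by a single polynomial-time reduction from \textsc{3-DM} that serves both problems at once, using the bipartite graph $G = G_Q$ already constructed. First I would confirm that the reduction is polynomial: $G_Q$ has $3n + m + 1$ vertices and $O(n + m)$ edges, all directly read off from the instance $Q$, and it is bipartite by construction with parts $A$ and $\{r\} \cup B$. So it suffices to show that $Q$ admits a perfect $3$-dimensional matching if and only if $\tau_1(G) \ge t_1(n,m)$ (resp.\ $\tau_2(G) \ge t_2(n,m)$), with the threshold values $t_1, t_2$ computed below.

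Next I would restrict attention to spanning trees containing all edges of $(r:A)$, which is justified by Lemma~\ref{kkos-claim5}: it guarantees that both $\tau_1(G)$ and $\tau_2(G)$ are attained by such trees, so the extremal value of each metric over all spanning trees coincides with its extremal value over this restricted family. For any tree $T$ in the restricted family every vertex of $A$ is adjacent to $r$, and the only freedom is which edges between $A$ and $B$ are selected. I would then apply the trail-counting identities of Proposition~\ref{kkos-lemma3}, expressing $m(T)$ and $s(T)$ as linear functions of $\delta_3$ and $\delta_4$, the numbers of degree-$3$ and degree-$4$ vertices in $A$. The counts of trails of lengths one, two, and three are all fixed except for the term $6\delta_4 + 2\delta_3$, which is maximized precisely when $\delta_3 = 0$ and $\delta_4 = n$; this pins down the thresholds $t_1(n,m)$ and $t_2(n,m)$.

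Finally I would close the equivalence. In the forward direction, a perfect matching $\mathcal{M}^* = \{M_1, \ldots, M_n\}$ yields a spanning tree consisting of all edges of $(r:A)$ together with the three edges $ax, ay, az$ for each $a = \{x,y,z\} \in \mathcal{M}^*$; each such $a$ then has degree $4$, so $\delta_4 = n$ and the metric hits the threshold. For the converse, a tree in the restricted family attaining the threshold must have $\delta_4 = n$; since $|B| = 3n$ and these $n$ degree-$4$ vertices each contribute exactly three edges into $B$, spanningness forces those edges to cover $B$ without overlap, so the corresponding sets are pairwise disjoint and cover $X \cup Y \cup Z$—a perfect $3$-dimensional matching. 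The main obstacle is the reduction to trees containing all of $(r:A)$: without Lemma~\ref{kkos-claim5} one would have to analyze arbitrary spanning trees, where vertices of $B$ could serve as internal branch points and the clean degree bookkeeping on $A$ would break down. The neighbor-switching argument behind that lemma, which moves $B$-incident edges up to the root without decreasing either metric, is exactly what makes the restricted analysis valid.
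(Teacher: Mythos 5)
Your proposal is correct and follows essentially the same route as the paper: restrict to spanning trees containing all of $(r:A)$ via Lemma~\ref{kkos-claim5}, count trails with Proposition~\ref{kkos-lemma3} to reduce both metrics to the term $6\delta_4 + 2\delta_3 \le 6n$, and observe that equality (i.e.\ $\delta_4 = n$, $\delta_3 = 0$) holds exactly when the tree arises from a perfect $3$-dimensional matching. No substantive differences to report.
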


\section{Relations with maximum-leaf spanning trees}
\label{scale-free:sec3}

In this section we explore the relations between SF-spanning trees and maximum-leaf spanning trees of a graph. This is a direct continuation of the analysis from the previous section, where several reduction schemes exploit these relations.  The major result is the establishment of bounds for the $m$- and $s$-metrics of a tree depending on its number of nodes, number of leaves and diameter. 

In light of Proposition~\ref{kkos-theorem1} and the reduction scheme used to prove Theorem \ref{thm:apxhard}, one might think that an optimal tree should have a maximum or almost maximum possible number of leaves since intuitively a structure of an optimal tree should be ``star-like''. However, this simple intuition turns out to be somewhat misleading. In fact, the difference $\tau_2(G) - \max_{T \in ML(G)} s(T)$, where the maximum is taken over the set $ML(G)$ of all spanning trees of $G$ with the maximum number of leaves, can be arbitrarily large, as illustrated by the following example. For an integer $k \ge 2$, let $G_k$ be the graph of order $|G_k| = 2k + 4$ shown in Fig.~\ref{kkos-figure1} together with two of its spanning trees $T'$ (left) and $T''$ (right). The edges of $G_k$ not belonging to the corresponding spanning tree are dashed. It is easy to see that $T'$ is the only spanning tree of $G_k$ with the maximum number of leaves. One can show that $s(T') = (k + 2)|G_k|$ and $s(T'') = (k + 2)|G_k| + k$. Therefore, for every integer $k \ge 2$ we have $$\tau_2(G_k)\,\, - \max_{T \in ML(G_k)} s(T) \ge k.$$

\begin{figure}[h]
\centering
\includegraphics[width=9cm]{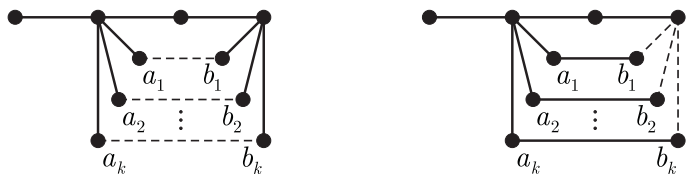}
\caption{Example of graph $G_k$ together with two of its spanning trees $T'$ (left) and $T''$ (right)}
\label{kkos-figure1}
\end{figure}

Nevertheless, within the class of trees $T$ there is a relation between the parameters $m(T)$, $s(T)$ and $\ell(T)$, which we will explore in the rest part of this section. The lower bounds for both Zagreb indices of a tree in terms of its number of leaves have been obtained previously and are summarized in the following theorem:

\begin{theorem}[\cite{goubko2014minimizing}]
For any tree $T$ with $\ell = \ell(T)$ leaves, the following statements hold:
\begin{itemize}
\item[a)] $m(T) \geq 9\ell - 16$;
\item[b)] if $\ell \geq 8$, then $s(T) \geq 11\ell - 27$.
\end{itemize}
Both bounds are sharp.
\end{theorem}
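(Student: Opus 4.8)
The plan is to treat the two inequalities separately, since (a) is governed by the degree sequence of $T$ alone whereas (b) genuinely sees the adjacency structure. For (a) I would argue with the degree counts $n_i=|\{v:\deg v=i\}|$. A tree on $n$ vertices with $\ell$ leaves satisfies $n_1=\ell$, $\sum_i n_i=n$ and $\sum_i i\,n_i=2(n-1)$; eliminating $n$ yields the excess-degree identity $\sum_{i\ge 3}(i-2)n_i=\ell-2$. Since $m(T)=\sum_i i^2 n_i=\ell+4n_2+\sum_{i\ge 3}i^2 n_i$ and $i^2\ge 8(i-2)$ for every $i$ (this is just $(i-4)^2\ge 0$), I would discard the nonnegative term $4n_2$ and bound the degree-$i$ contributions with $i\ge 3$ termwise, obtaining
\[
m(T)\ \ge\ \ell+8\sum_{i\ge 3}(i-2)n_i\ =\ \ell+8(\ell-2)\ =\ 9\ell-16 .
\]
Equality forces $n_2=0$ and every non-leaf to have degree exactly $4$; such trees exist for every even $\ell$ (e.g.\ a caterpillar whose spine vertices all have degree $4$), which gives sharpness.

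For (b) the degree sequence no longer determines $s(T)$, so I would combine the path-counting identity of Proposition~\ref{kkos-lemma3} with the neighbor-switching lemmas. From $s=\gamma_3+2\gamma_2+\gamma_1$, $m=2\gamma_2+2\gamma_1$ and $\gamma_1=n-1$ one gets the reformulation
\[
s(T)\ =\ m(T)+\gamma_3(T)-(n-1),\qquad \gamma_3(T)=\sum_{uv\in E(T)}(\deg u-1)(\deg v-1),
\]
which packages the whole adjacency dependence of $s$ into the number $\gamma_3$ of $3$-edge paths. The plan is to feed the bound $m(T)\ge 9\ell-16$ from part (a) into this identity and to control $\gamma_3(T)-(n-1)$ from below in terms of $\ell$ alone. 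To do so I would first reduce to a canonical minimizer: Lemma~\ref{kkos-lemma1} and Corollary~\ref{kkos-corollary1} quantify exactly how $s$ changes when neighbors are transferred between two vertices, so by applying such moves in the $s$-decreasing direction (spreading the large degrees apart rather than concentrating them) one can argue that a tree of minimum $s$ for a fixed number of leaves has all non-leaf degrees confined to a small range and no two large-degree vertices adjacent. On such a restricted family both $m$ and $\gamma_3$ become explicit functions of a few parameters and the minimum can be read off.

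The hardest part will be this extremal-structure reduction. Lemma~\ref{kkos-lemma1} compares two trees in one direction under degree hypotheses, whereas here I need a global minimizer, and the optimal non-leaf degrees turn out to straddle $3$ and $4$: a balanced mixture of degree-$3$ and degree-$4$ internal vertices beats both the uniform degree-$3$ tree and the uniform degree-$4$ tree. Matching this mixed optimum against the switching inequalities, which only compare degrees rather than prescribe them, is the subtle point, and keeping the number of leaves fixed while switching (a total neighbor switch turns $v$ into a leaf) requires bookkeeping. This is also the source of the hypothesis $\ell\ge 8$: for few leaves the minimizer lies in a different ``few-hub'' regime~--- a small number of high-degree vertices, sometimes separated by degree-$2$ vertices~--- so the linear estimate only becomes valid and sharp once $\ell$ is large enough. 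I would therefore finish by verifying the remaining small configurations directly, which is what fixes the slope $11$, the constant $-27$, and the extremal tree attaining equality.
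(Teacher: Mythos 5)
The paper offers no proof of this theorem~--- it is imported verbatim from the cited reference~--- so there is nothing in the text to compare your proposal against; it has to stand on its own. Part (a) does: the excess-degree identity $\sum_{i\ge 3}(i-2)n_i=\ell-2$, the pointwise bound $i^2\ge 8(i-2)$ (i.e.\ $(i-4)^2\ge 0$), and the discarding of $4n_2$ correctly give $m(T)\ge \ell+8(\ell-2)=9\ell-16$, with equality exactly for trees all of whose internal vertices have degree $4$ (e.g.\ $K_{1,4}$, or the degree-$4$ caterpillars you describe), so sharpness is also settled. This half is a complete and rather elegant argument.

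Part (b) is not a proof but a plan, and the step you defer is the entire content of the claim. The identity $s(T)=m(T)+\gamma_3(T)-(n-1)$ with $\gamma_3(T)=\sum_{uv\in E(T)}(\deg u-1)(\deg v-1)$ is correct, but the two ingredients you propose to combine cannot by themselves reach the stated slope: without structural information the best one can say is that each of the $\ell$ pendant edges contributes $-1$ and every other edge contributes at least $0$ to $\sum_{uv\in E(T)}\bigl[(\deg u-1)(\deg v-1)-1\bigr]$, so $\gamma_3(T)-(n-1)\ge -\ell$, and adding part (a) yields only $s(T)\ge 8\ell-16$, which is strictly weaker than $11\ell-27$ for all $\ell\ge 4$. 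Closing that gap requires exactly the extremal-structure argument you sketch but do not carry out: identifying the minimizer of $s$ among trees with $\ell$ leaves fixed. The tools you invoke (Lemma~\ref{kkos-lemma1}, Corollary~\ref{kkos-corollary1}) only certify that certain switches \emph{increase} $s$ under one-sided degree hypotheses; to pin down a minimizer you need $s$-decreasing transformations that \emph{preserve} the leaf count, and a total neighbor switch does not (it creates a new leaf). None of the bookkeeping that would produce the constants $11$ and $-27$, identify the mixed degree-$3$/degree-$4$ extremal trees, or explain the threshold $\ell\ge 8$ is actually performed, so part (b) must be regarded as unproven as written.
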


It is known that $m(T)\leq \frac{n}{n-1}s(T)$ holds~\cite{vukivcevic2007comparing}. Thus we have $\tau_1(G)\leq \frac{n}{n-1}\tau_2(G)$. In light of this fact, in the following we will establish upper bound in terms of the number of leaves just for the $s$-metric of a tree. We will use the following auxiliary definitions and properties. Let $T$ be a tree of diameter $d = \mathrm{diam}(T)$ and with $\ell = \ell(T)$ leaves.  A \emph{$2$-path} in $T$ is a maximal path with at least one internal node, all of whom  have  degree $2$. Among all 2-paths, we distinguish the paths with one  end vertex being a leaf. Such paths will be further referred to as \emph{pendant $2$-paths}, and the number of such paths will be denoted by $p_2$.

\begin{lemma}\label{lemma:auxleaf}
The following properties of a tree $T$ hold:
\begin{itemize}
\item[A1)] For each vertex $v$ in $T$, $\deg v \leq \ell$; and for every pair of vertices $v_1$ and $v_2$ in $T$, $\deg v_1 + \deg v_2 \leq \ell + 2$. 
\item[A2)] $p_2 \leq \ell$.
\item[A3)] Let $v$ be a leaf of $T$ adjacent to the vertex $u$, and $f(u) = \sum_{w\in N(u)} \deg w$. Then
\begin{equation}\label{eq:mnoleaf}
m(T) = m(T-v) + 2\deg u,
\end{equation}
\begin{equation}\label{eq:snoleaf}
s(T) = s(T-v) + f(u) + \deg u - 1.
\end{equation}
\item[A4)] Let $P^1$ and $P^2$ be $2$-paths in $T$. Then $|P^1| + |P^2| \leq d + 2$ and $|P^i| \leq d +1$, $i=1,2$.
\end{itemize}
\end{lemma}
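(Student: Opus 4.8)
The plan is to treat the four items separately: A1 and A2 are leaf-counting arguments, A3 is a term-by-term computation from the definitions of the two metrics, and A4 is a short case analysis on how two $2$-paths can intersect inside a tree. Throughout I read $|P|$ as the number of vertices of $P$, and I use that any path in $T$ is the unique path between its endpoints, so its length is $\mathrm{dist}_T(a,b) \le d$ and it has at most $d+1$ vertices.

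For A1, I would root $T$ at $v$ and note that deleting $v$ splits $T$ into $\deg v$ branches, one hanging from each neighbor; each branch, being a tree, contains a vertex that is a leaf of all of $T$ (the vertex of the branch farthest from $v$ has degree $1$ in $T$). These leaves lie in pairwise distinct branches, so $\ell \ge \deg v$. For the second inequality I would fix distinct $v_1,v_2$, let $v_1^{+}$ and $v_2^{-}$ be their neighbors on $P_T(v_1,v_2)$, and count the $\deg v_1 - 1$ branches hanging off $v_1$ away from that path together with the $\deg v_2 - 1$ branches hanging off $v_2$ away from it. Each branch supplies its own leaf of $T$, and the $v_1$-side and $v_2$-side families are disjoint, since in a tree the path joining any two of their vertices must run through the whole of $P_T(v_1,v_2)$; hence $\ell \ge (\deg v_1 - 1) + (\deg v_2 - 1)$. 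The point to watch is exactly this disjointness of the two leaf families.

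A2 follows by exhibiting an injection from the pendant $2$-paths into the leaf set: each pendant $2$-path has, by definition, one leaf as an endpoint, and conversely a leaf $u$, having a single incident edge, determines at most one maximal degree-$2$ path emanating from it, so $p_2 \le \ell$. For A3 I would compare $T$ and $T-v$ term by term. Deleting the leaf $v$ lowers $\deg u$ by one and fixes every other degree, so from $m(T)=\sum_x (\deg x)^2$ we get $m(T)-m(T-v) = 1 + \big((\deg u)^2 - (\deg u - 1)^2\big) = 2\deg u$; and from $s(T)=\sum_{xy\in E}\deg x\deg y$ the same observation isolates the edge $uv$ (contributing $\deg u$) and the edges $uw$ with $w\in N(u)\setminus\{v\}$ (each dropping by $\deg w$), giving $s(T)-s(T-v) = \deg u + \sum_{w \in N(u)\setminus\{v\}}\deg w = \deg u + f(u) - 1$, since $\deg v = 1$.

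The main work is A4, and it is the step I expect to be delicate. The key structural fact is that internal vertices of a $2$-path have degree $2$ and hence lie on a unique maximal degree-$2$ path, so two distinct $2$-paths $P^1,P^2$ can meet only at endpoints (an endpoint has degree $\ne 2$ and so cannot be internal to the other path), and they cannot share both endpoints without creating a cycle. This leaves two cases. If $P^1$ and $P^2$ share a single endpoint $c$, then since $\deg c \ge 3$ the two paths leave $c$ by different edges, so their union is a single path on $|P^1|+|P^2|-1$ vertices, which has at most $d+1$ vertices; hence $|P^1|+|P^2|\le d+2$. If they are vertex-disjoint, the connecting path in $T$ must attach to each $P^i$ at an endpoint (again because internal vertices have degree $2$), so $P^1$, the connecting path, and $P^2$ splice into one path on at least $|P^1|+|P^2|$ vertices, whence $|P^1|+|P^2|\le d+1$. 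The bound $|P^i|\le d+1$ is immediate from the vertex-count remark. The subtle point in A4 is justifying that these pieces genuinely splice into a repetition-free path, and this is precisely where the degree-$2$ condition on the internal vertices is invoked.
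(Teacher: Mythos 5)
Your proof is correct and takes essentially the same route as the paper's (which only sketches these observations): disjoint leaf-bearing branches for A1, the injection from pendant $2$-paths to leaves for A2, the direct degree bookkeeping for A3, and for A4 the case split on whether the two $2$-paths are disjoint or share a single endpoint. Your write-up simply supplies the details---in particular the justification that distinct $2$-paths can meet only in an endpoint and that the resulting unions splice into genuine paths---that the paper leaves implicit.
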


\begin{proof}
The first part of statements A1) is implied by the following two facts: (i) every maximal path that starts at a neighbor of $v$ ends with a leaf; (ii) the paths that start at different neighbors of $v$ and do not contain $v$ are disjoint. The second part similarly follows from the following observations. Recall that $v_1^{+}$ and $v_2^{-}$ are the neighbors of $v_1$ and $v_2$ on the path $P_T(v_1, v_2)$. Then (i) every maximal path that starts at a vertex from the set $A = (N(v_1) \cup N(v_2)) \setminus \{v_1^{+},v_2^{-}\}$ ends with a leaf and (ii) the paths that start at different vertices of $A$ and and do not contain $v_1$ and $v_2$ are disjoint.



Statement A2) is implied by the fact that every pendant 2-path contains at least
one leaf and a leaf can be contained in at most one such path. Statement A3) could be directly verified using the definitions of $s(T)$ and $m(T)$. Finally,  statement A4) follows from the observation that any pair of 2-paths either do not intersect or have a common source vertex.
\end{proof}

\begin{theorem}\label{thm:maxleaf}
Let $T$ be a tree of order $n \ge 3$ having diameter $d$ and containing $\ell$ leaves. Then $s(T) \leq (d - 1)\ell^2$.
\end{theorem}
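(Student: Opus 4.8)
The plan is to reduce the theorem to a purely degree-theoretic inequality by exploiting the crude but sharp degree bound from property A1) of Lemma~\ref{lemma:auxleaf}. Since every edge $uv$ of $T$ satisfies $\max(\deg u,\deg v)\le\ell$, we have $\deg u\deg v=\max(\deg u,\deg v)\cdot\min(\deg u,\deg v)\le\ell\cdot\min(\deg u,\deg v)$, and summing over all edges gives
\[
s(T)=\sum_{uv\in E(T)}\deg u\deg v\;\le\;\ell\sum_{uv\in E(T)}\min(\deg u,\deg v).
\]
Hence it suffices to prove the auxiliary inequality $\sum_{uv\in E(T)}\min(\deg u,\deg v)\le(d-1)\ell$. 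This reduction is not wasteful at the extremes: for the star $K_{1,n-1}$ every edge already has $\max(\deg u,\deg v)=\ell$, and the same holds along a path, so both inequalities are simultaneously tight exactly in the cases (stars and paths) where Proposition~\ref{kkos-theorem1} predicts equality in $s(T)\le(d-1)\ell^2$.

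The core of the argument is then the auxiliary inequality, which I would establish by induction on the number of leaves $\ell$. The base case $\ell=2$ is the path $P_n$, where $d=n-1$ and a direct count gives $\sum_{uv}\min(\deg u,\deg v)=2d-2=(d-1)\ell$. For the inductive step I would fix a diametral path, let $b$ be the branch vertex (degree $\ge 3$) closest to one of its leaf endpoints $v$, and delete the pendant path $Q$ joining $v$ to $b$ but not $b$ itself. The resulting tree $T'$ loses exactly one leaf, so $\ell(T')=\ell-1$, and since $T'$ still contains the remainder of the diametral path we have $d-1\le\mathrm{diam}(T')\le d$. Writing $S(\cdot)=\sum_{uv}\min(\deg u,\deg v)$, the increment $S(T)-S(T')$ is the sum of the $\min$-contributions of the edges of $Q$ (equal to $2|Q|-1$ when $Q$ is a pendant $2$-path, from the degree pattern $1,2,\dots,2$ along $Q$) plus the loss on the edges surviving at $b$ when its degree drops by one; the latter loss equals the number of remaining neighbours of $b$ whose degree is at least $\deg_T b$. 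The inductive hypothesis $S(T')\le(\mathrm{diam}(T')-1)(\ell-1)$ must then be combined with an upper bound on this increment.

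The main obstacle is the diameter bookkeeping in the inductive step. When $\mathrm{diam}(T')=d$, the available budget $(d-1)\ell-(d-1)(\ell-1)=d-1$ is tight, so one needs $S(T)-S(T')\le d-1$; here property A4), bounding the combined length of any two $2$-paths by the diameter, is exactly what controls the length $|Q|$ of the deleted pendant $2$-path against the length of the surviving branch at $b$ that forces $\mathrm{diam}(T')=d$. When instead $\mathrm{diam}(T')<d$, each unit of diameter drop frees an extra $\ell-1$ of budget, comfortably absorbing a longer $Q$. Making this dichotomy quantitative—showing that the removed length plus the degree-drop loss never exceeds $(d-1)+(d-\mathrm{diam}(T'))(\ell-1)$—is the delicate point. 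An alternative that avoids per-step diameter tracking is a global estimate: split each term as $\min(\deg u,\deg v)=\min(\deg u,\deg v,2)+(\min(\deg u,\deg v)-2)^{+}$, decompose $T$ into its branches, and bound the total contribution of the pendant and internal $2$-paths using property A2) ($p_2\le\ell$) together with A4). I expect this structural route to reproduce the same $(d-1)\ell$ bound, with the diameter again entering solely through the pairwise $2$-path length estimate of A4).
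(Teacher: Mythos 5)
Your opening reduction is sound: by property A1) every edge satisfies $\deg u\,\deg v\le\ell\cdot\min(\deg u,\deg v)$, so the theorem would indeed follow from the auxiliary inequality $S(T):=\sum_{uv\in E(T)}\min(\deg u,\deg v)\le(d-1)\ell$, which is tight for paths, stars and balanced spiders. The gap is that your induction on $\ell$ does not establish this inequality. First, the assertion $d-1\le\mathrm{diam}(T')$ is simply false: deleting the pendant $2$-path $Q$ can lower the diameter by up to $|Q|$, not by $1$. More seriously, in the critical case $\mathrm{diam}(T')=d$ the available budget is exactly $(d-1)\ell-(d-1)(\ell-1)=d-1$, while the increment is $S(T)-S(T')=(2|Q|-1)+\bigl|\{w\in N(b)\setminus V(Q):\deg w\ge\deg b\}\bigr|$, and this can exceed the budget. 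Concretely, for $k\ge 3$ let $b$ have degree $3$, carrying a pendant path of length $k$ ending at a leaf $v$, and two further neighbours $w_1,w_2$, each of degree $3$ and each carrying two pendant paths of length $k-1$. Then $\ell=5$, $d=2k$, the vertex $v$ is an endpoint of a diametral path with $b$ the nearest branch vertex; deleting $Q$ gives $\mathrm{diam}(T')=2k=d$ and $\ell(T')=4$, the increment is $(2k-1)+2=2k+1$, but the budget is only $2k-1$. (For $k=3$ one has $S(T)=23\le 25=(d-1)\ell$, so the target inequality survives, but $S(T')=16$ sits strictly below the inductive bound $20$ --- slack your step silently relies on.) Property A4) does not repair this: it controls $|Q|$ against the surviving long branch, but not the extra loss term coming from the degree drop at $b$ when $b$ has high-degree neighbours. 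So either the deletion rule must be chosen adaptively (peeling a length-$(k-1)$ path off $w_1$ instead does fit the budget in this example), or a stronger invariant is needed; as written the inductive step fails.

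For comparison, the paper never passes through $S(T)$: it settles $d\le 4$ by direct computation and then inducts on the pair $(d,n)$, distinguishing whether some diametral path avoids all pendant $2$-paths (then the two end-leaves of that path are removed at a cost of at most $3\ell+2$, via A1) and A3)) or every diametral path contains one (then \emph{all} pendant $2$-paths are stripped simultaneously, which forces the diameter to decrease, the total cost being bounded through A2) and A4)). If you wish to salvage your route, you must prove $S(T)\le(d-1)\ell$ by an argument insensitive to the choice of diametral endpoint; be aware that this auxiliary inequality, although consistent with every example I tested, is not established anywhere in the paper and would need a complete proof of its own.
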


\begin{proof}
By Proposition \ref{kkos-theorem1}, the statement is true when $T \cong K_{1, n - 1}$ and $T \cong P_n$. If $d = 3$ then $T$ is isomorphic to either $P_4$ or a double star $S_{\ell_1, \ell_2}$ with $\ell = \ell_1 + \ell_2 \geq 3$. In this case it is easy to see that $$s(T) = \ell_1(\ell_1 + 1) + \ell_2(\ell_2 + 1) + (\ell_1 + 1)(\ell_2 + 1) \leq (\ell + 1)^2 \leq 2\ell^2 = (d-1)l^2.$$ 

If $d = 4$, then consider the central vertex $v$ of $T$ (i.e. the distance between $v$ and any other vertex of $T$ is at most 2). Suppose that this vertex is adjacent to $q$ leaves and $r$ non-leaf vertices, that are adjacent to $k_1, \ldots, k_r$ leaves, respectively. Then we have $$s(T) = q(q + r) + (q + r)\sum_{i = 1}^r (k_i + 1) + \sum_{i = 1}^r k_i(k_i + 1) = (q + r)^2 + (q + r + 1)\sum_{i = 1}^r k_i + \sum_{i = 1}^r k_i^2.$$

Suppose first that $k_i = 1$ for all $i = 1, \ldots, r$. Then $s(T) = (q + r)^2 + (q + r + 1)r + r$ and $\ell = q + r$. Given that $q + r \geq 2$, it is easy to see that $s(T) \leq 3(q + r)^2$, i.e. the statement of the theorem holds. Now assume that $\sum_{i = 1}^r k_i > 1$. In this case $q + r +1 \leq \ell$ and $\sum_{i = 1}^r k_i \leq \ell$. Then we have $s(T) \leq (q + r)^2 + (q + r + 1)\sum_{i = 1}^r k_i + (\sum_{i = 1}^r k_i)^2 \leq 3\ell^2$, i.e. the desired property holds again. 

So, further we assume that $5 \leq d \leq n - 2$ and $\ell \geq 3$.  For such trees we will prove the theorem using induction on the ordered pair $(d, n)$. Consider the following two cases.

1) \emph{There exists a path $P = (v_1, v_2, \ldots, v_{d + 1})$ of length $d$ which does not contain a pendant $2$-path.}

We have $\deg v_2 \geq 3$, $\deg v_d \geq 3$. The fact that $P$ has the maximum length implies that $\deg v_1 = \deg v_{d + 1} = 1$ and all neighbors of $v_2$ and $v_d$ are leaves with the exception of the vertices $v_3$, $v_{d - 1}$. Let $T' = T - \{v_1, v_{d + 1}\}$. The properties A1) and A3) imply that
\begin{equation}
\label{ineq:remleaf}
s(T) = s(T') + 2(\deg v_2 - 1) + \deg v_3 + 2(\deg v_d - 1) + \deg v_{d - 1} \leq  s(T') +  3\ell + 2.
\end{equation}
Furthermore, $\ell(T') = \ell - 2$, $|T'| = n - 2$ and $\mathrm{diam}(T') \leq d$. By utilizing the inductive hypothesis, we get
\begin{equation}
s(T) \leq (d - 1)(\ell - 2)^2 + 3\ell + 2 \leq (d - 1)\ell^2.
\end{equation}

2) \emph{All maximum paths of $T$ contain pendant $2$-paths.}

Suppose that $P = (v_1, v_2, \ldots, v_k, w)$ is a pendant 2-path, with $v_1$ being a leaf. Since $G$ is not isomorphic to $P_n$, we have $\deg w \geq 3$. By iteratively removing vertices $v_1, \ldots, v_{k - 1}$ and applying~\eqref{eq:snoleaf}, we get that $$s(T) = s(T - \{v_1, \ldots, v_{k - 1}\}) + 4(k - 2) + \deg w + 2.$$

Let $P^1, \ldots, P^{p_2}$ be the pendant 2-paths of $T$ ordered in decreasing order of their lengths. Note that by the property A4) $|P^i|\leq \frac{d}{2} + 1$ for all $i \geq 2$. Denote by $T'$ the tree obtained from $T$ by removal of vertices of these 2-paths, as described above. Let $w_i$ be the non-leaf starting vertex of the $i$th path. Using the properties A1), A4), we get 
\begin{equation}
\label{ineq:snoallpaths}
s(T) \leq s(T') + 4\sum_{i = 1}^{p_2} (|P^i| - 3) + 2p_2 + \sum_{i = 1}^{p_2} \deg w_i \leq s(T') + \rho(T), 
\end{equation}
where 
\begin{equation}\label{eq:rho}
\rho(T) = 
\begin{cases}
	4(d + 1) - 10 + \ell, \quad \text{if}\,\, p_2 = 1; \\
	4(d + 2)p_2/2 - 10p_2 + \ell p_2, \quad \text{if}\,\, p_2\,\, \text{is even}; \\
	4(d + 2)(p_2-1)/2 + d/2 + 1 - 10p_2 + \ell p_2, \quad \text{if}\,\, p_2 \geq 3 \,\, \text{is odd}.
\end{cases} 
\end{equation}

For the tree $T'$, there are no pendant 2-paths, $\ell(T') = \ell$, $|T'| \le n - 1$ and $\mathrm{diam}(T') \leq d - 1$. As in the case 1), consider the longest path $P = (v_1, v_2, \ldots, v_{d})$ in $T'$. The same reasoning as above yields 
\begin{equation}
\label{ineq:remleaf1}
s(T') \leq s(T'') +  3\ell + 2,
\end{equation}
where $T'' = T'-\{v_1, v_{d}\}$.
Furthermore, $\ell(T'') = \ell - 2$, $|T''| \le n-3 $ and $\mathrm{diam}(T'') \leq d - 1$. 

Consider the case when $p_2$ is even (other cases can be handled similarly). Using simple arithmetic transformations and the property A2) we get that $\rho(T) \leq (2d + \ell - 6) \ell$. By utilizing the inductive hypothesis and using~\eqref{ineq:snoallpaths},~\eqref{ineq:remleaf1} we get
\begin{equation*}
s(T) \leq (d - 2)(\ell - 2)^2 + 3\ell + 2 + \rho(T) \leq (d - 2)(\ell - 2)^2 + 3\ell + 2 + (2d + \ell -6) \ell.
\end{equation*}

\noindent
Given that $d\geq 5$ and $\ell \geq 3$, the right-hand side of this inequality does not exceed $(d - 1)\ell^2$. This proves the theorem.
\end{proof}

Note that the upper bound provided by Theorem~\ref{thm:maxleaf} is sharp, as it holds with equality for both $T = K_{1, n - 1}$ and $T = P_n$.

\section{Split graphs}
\label{scale-free:sec4}

In this section, we study structural properties of optimal trees of a split
graph. Based on these properties, for any split graph $G$, we establish
sharp lower and upper bounds for the second SF-dimension of $G$, characterize the extremal graphs with respect to them and
establish the computational complexity of \textsc{$s$-SF Spanning Tree} and \textsc{$m$-SF Spanning Tree} problems in the class of split graphs. Recall that a graph $G$ is called a \emph{split graph} if its vertex set $V(G)$ can be
partitioned into sets $K$ and $I$ such that $K$ is a clique and $I$ is an
independent set, where $(K, I)$ is called a \emph{split partition} of $G$.
A typical subclass of split graphs is the class of threshold graphs. A split
graph $G$ with a split partition $(K, I)$ is called a \emph{threshold graph} if
there exists an ordering $x_1, x_2, \ldots, x_{|I|}$ of the vertices in $I$
such that $N(x_1) \subseteq N(x_2) \subseteq \ldots \subseteq N(x_{|I|})$. The classes of split graphs and threshold graphs were introduced,
respectively, by F{\"o}ldes and Hammer~\cite{foldes1977split}, and Chv\'{a}tal
and Hammer~\cite{chvatal1977aggregations}, and have been extensively studied \cite{golumbic2004algorithmic,mahadev1995threshold}.

We say that a family $\mathcal{F}$ of graphs is \emph{closed under the
adjunction of universal} (resp., \emph{isolated}) \emph{vertices} if for every graph $G$ in $\mathcal{F}$, adjoining a new vertex
adjacent to all (resp., no) old vertices in $G$ produces another graph in
$\mathcal{F}$. Split graphs and threshold graphs are closed under the
adjunction of both universal and isolated vertices.

A well-known structural characterization of threshold graphs due to
Chv\'{a}tal and Hammer~\cite{chvatal1977aggregations} is the following: $G$ is a threshold graph if and only if $G$ can be built from the null graph $K_0$ by a
sequence of adjunctions of universal or isolated vertices. Consequently, in
a connected threshold graph $G$ there always exists at least one universal 
vertex and hence $\tau_2(G) = (n - 1)^2$ by Corollary~\ref{kkos-theorem3}.

In order to establish bounds for the second SF-dimensions of a split graph (i.e., Theorem~\ref{kkos-theorem5}), we first study the structural properties of $s$-optimal trees of split graphs. Thus, we let $G$ be a connected split graph with a split partition $(K, I)$ and $T^{\mathrm{sopt}}$ be an $s$-optimal tree of $G$, i.e., $\tau_2(G) = s(T^{\mathrm{sopt}})$. If $|K| = 1$, then $G$ is $K_{1, n - 1}$ and $\tau_2(G) = (n - 1)^2$. 
We see $\tau_2(K_n) = (n - 1)^2$, and without loss of generality, we may assume that $I$ is a non-empty and $K$ is a maximal clique. If $|K| = 2$, then $G$ is isomorphic to a double star $S_{m, n}$. An easy direct check shows that $\tau_2(S_{m, n}) = (m + n + 1)^2 - mn$. Therefore, we may further assume that $|K| \ge 3$.

We proceed with a series of claims. In the following proofs we are referring to the case of (total) neighbor switch with respect to a pair of adjacent vertices.  

\begin{claim}\label{kkos-claim1}
All vertices in $I$ are leaves of $T^{\mathrm{sopt}}$.
\end{claim}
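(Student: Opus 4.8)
The plan is to argue by contradiction using the total neighbor switch of Corollary~\ref{kkos-corollary1}. Suppose some vertex $v \in I$ is not a leaf of $T^{\mathrm{sopt}}$, so that $\deg_{T^{\mathrm{sopt}}} v \ge 2$. Since $I$ is an independent set, all neighbors of $v$ in $T^{\mathrm{sopt}}$ already lie in $K$. The idea is to relocate all but one of these neighbors onto a single clique vertex $u$, thereby turning $v$ into a leaf while strictly increasing the $s$-metric, which contradicts $s(T^{\mathrm{sopt}}) = \tau_2(G)$.

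First I would look for a neighbor $u$ of $v$ in $T^{\mathrm{sopt}}$ with $\deg_{T^{\mathrm{sopt}}} u \ge 2$. Given such a $u$, I set $B = N_{T^{\mathrm{sopt}}}(v) \setminus \{u\}$ and perform the total neighbor switch $\mathcal{S}_{v \to u}^{B}$ to obtain a tree $\widetilde{T}$ in which $v$ has become a leaf. Two points must be verified. First, $\widetilde{T}$ is again a \emph{spanning tree of $G$}: the only new edges are $ub$ with $b \in B \subseteq K$ and $u \in K$, and since $K$ is a clique every such edge belongs to $E(G)$. Second, the hypotheses of Corollary~\ref{kkos-corollary1} are met: $u$ and $v$ are adjacent, so the side condition $\alpha \ge \beta$ is vacuous, and $\deg_{T^{\mathrm{sopt}}} u \ge 2$ is precisely the condition that makes the strict inequality available. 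Hence $s(\widetilde{T}) > s(T^{\mathrm{sopt}})$, the desired contradiction.

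It then remains to rule out the degenerate situation in which every neighbor of $v$ in $T^{\mathrm{sopt}}$ is a leaf, since there no target $u$ of degree at least $2$ exists. In that case the neighbors of $v$ are exactly its leaf-neighbors, so the connected component of $v$ is the star centered at $v$; as $T^{\mathrm{sopt}}$ is a connected spanning tree, $T^{\mathrm{sopt}}$ must be that star and $V(G) = \{v\} \cup N_{T^{\mathrm{sopt}}}(v)$ with $N_{T^{\mathrm{sopt}}}(v) = K$. But then $v$ is adjacent in $G$ to every vertex of $K$, so $K \cup \{v\}$ is a clique properly containing $K$, contradicting the maximality of $K$. This disposes of the degenerate case and completes the proof.

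I expect the main obstacle to be precisely the bookkeeping that keeps $\widetilde{T}$ a legitimate spanning tree of $G$ rather than an abstract tree: the whole argument hinges on $K$ being a clique, so that the relocated edges actually exist in $G$, together with the ability to pick a degree-$\ge 2$ target $u$, which is why the maximality of $K$ is needed to eliminate the star case.
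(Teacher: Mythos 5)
Your proof is correct and follows essentially the same route as the paper's: assume a non-leaf $v\in I$, pick a neighbor $u$ of degree at least $2$ in $T^{\mathrm{sopt}}$, apply the total neighbor switch (legal since all relocated endpoints lie in the clique $K$), and invoke Corollary~\ref{kkos-corollary1} in the adjacent case to contradict optimality. The only cosmetic difference is in ruling out the degenerate star case: the paper deduces $t<|K|$ directly from the maximality of $K$ and then cites connectivity, while you derive the same contradiction by observing that the star would force $v$ to be adjacent to all of $K$ — the two arguments are interchangeable.
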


\begin{proof}
Suppose that the statement is false. Then there exists some vertex $v \in I$
such that $\deg_{T^{\mathrm{sopt}}} v = t \ge 2$. Denote the neighbors of $v$
in $T^{\mathrm{sopt}}$ by $u, b_1, \ldots, b_{t - 1}$. Since $K$ is maximal
clique, it follows that $t < |K|$. This fact together with the connectivity
of $T^{\mathrm{sopt}}$ implies that at least one of the vertices
$u, b_1, \ldots, b_{t - 1}$ must have degree at least 2 in
$T^{\mathrm{sopt}}$. We may assume, without loss of generality, that
$\deg_{T^{\mathrm{sopt}}} u \ge 2$. Let tree $\widetilde{T}$ be obtained from
$T^{\mathrm{sopt}}$ by the total neighbor switch (i.e., by deleting the edges $vb_1, \ldots, vb_{t - 1}$ and
adding the edges $ub_1, \ldots, ub_{t - 1}$). Since $ub_i \in E(G)$ for
$i = 1, \ldots, t - 1$, it follows that $\widetilde{T}$ is a spanning tree
of $G$ and so $s(\widetilde{T}) > s(T^{\mathrm{sopt}})$ due to
Corollary~\ref{kkos-corollary1}. This, however, contradicts the optimality
of $T^{\mathrm{sopt}}$.
\end{proof}

Denote by $T^*$ the subtree obtained from $T^{\mathrm{sopt}}$ by deleting all
the leaves in $I$. For a vertex $x \in V(T^*)$, we will use $S_x$ to denote
the set of all vertices from $I$ which are adjacent to $x$ in
$T^{\mathrm{sopt}}$. By Claim~\ref{kkos-claim1}, $S_x \cap S_y = \emptyset$
for any two vertices $x$ and $y$ of $T^*$.

\begin{claim}\label{kkos-claim2}
The tree $T^*$ is a star.
\end{claim}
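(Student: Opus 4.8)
The plan is to argue by contradiction, exhibiting a strictly $s$-increasing neighbor switch of the kind analyzed in Lemma~\ref{kkos-lemma1} whenever $T^*$ fails to be a star, thereby contradicting the $s$-optimality of $T^{\mathrm{sopt}}$.

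First I would record the shape of $T^*$. By Claim~\ref{kkos-claim1} every vertex of $I$ is a leaf of $T^{\mathrm{sopt}}$, so deleting all leaves of $I$ removes exactly $I$ and leaves a tree on $V(T^*) = K$; thus $T^*$ is a spanning tree of the clique $K$ and every edge of $T^*$ joins two vertices of $K$. A tree is a star precisely when it has at most one internal (degree $\ge 2$) vertex, equivalently when its diameter is at most $2$. Hence, if $T^*$ is not a star, it has at least two internal vertices; since the internal vertices of any tree induce a connected subtree (the path between two internal vertices avoids leaves), there is an edge of $T^*$ both of whose ends are internal. I pick such an edge and label its ends $u,v$ so that $\deg_{T^{\mathrm{sopt}}} u \ge \deg_{T^{\mathrm{sopt}}} v$; note $\deg_{T^*} u, \deg_{T^*} v \ge 2$.

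The key point — and the step I expect to be the main obstacle — is choosing a switch that is simultaneously valid in $G$ and strictly $s$-increasing. The natural move, a total neighbor switch making $v$ a leaf, would reattach all neighbors of $v$ to $u$, including the $I$-leaves $S_v$; but $u \in K$ need not be adjacent in $G$ to vertices of $S_v \subseteq I$, so those new edges may fail to exist. To sidestep this I would apply the non-total switch $\mathcal{S}_{v\to u}^B$ from Section~\ref{switch:sec2.5} with $B = N_{T^*}(v)\setminus\{u\}$ and $C = S_v$: since $u$ and $v$ are adjacent we have $v^{-}=u$, the moved vertices $B$ all lie in the clique $K$, so every new edge $u b_i$ exists in $G$, and $\widetilde T := \mathcal{S}_{v\to u}^B(T^{\mathrm{sopt}})$ is again a spanning tree of $G$.

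It then remains to verify the hypotheses of Lemma~\ref{kkos-lemma1} for adjacent $u,v$, namely $p \ge r+1$ and $D_A > D_C$ (the condition $\alpha \ge \beta$ being required only in the non-adjacent case). Writing $p = \deg_{T^{\mathrm{sopt}}} u$, $t = \deg_{T^{\mathrm{sopt}}} v$, $q = |B| = \deg_{T^*} v - 1 \ge 1$ and $r = |C| = |S_v|$, we have $t = q + r + 1$ and, by the choice of labels, $p \ge t = q + r + 1 \ge r + 2$, giving $p \ge r+1$. Moreover each vertex of $C = S_v$ is a leaf, so $D_C = r$, while $A = N_{T^{\mathrm{sopt}}}(u)\setminus\{v\}$ consists of $p-1$ vertices of degree at least $1$, whence $D_A \ge p - 1 \ge r+1 > r = D_C$. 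Lemma~\ref{kkos-lemma1} then yields $s(\widetilde T) > s(T^{\mathrm{sopt}})$, contradicting optimality, so $T^*$ must be a star. The only routine work left is this degree bookkeeping, which I would not spell out in detail.
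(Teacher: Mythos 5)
Your proof is correct and follows essentially the same route as the paper: assume $T^*$ is not a star, pick an edge $uv$ of $T^*$ with both ends internal, move $B = N_{T^*}(v)\setminus\{u\}$ onto $u$ (legal since $B\subseteq K$), and invoke Lemma~\ref{kkos-lemma1} in the adjacent case. The only cosmetic difference is the normalization (you order by $\deg_{T^{\mathrm{sopt}}}$ and bound $D_A \ge p-1$, whereas the paper orders by $|S_u|\ge|S_v|$ and bounds $D_A \ge m + |S_u|$); both verifications of $p\ge r+1$ and $D_A > D_C$ are valid.
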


\begin{proof}
Assume, to the contrary, that $T^*$ is not a star. Then there is some edge
$uv$ of $T^*$ that joins two vertices $u$ and $v$ for which
$\deg_{T^*} u = p \ge 2$ and $\deg_{T^*} v = t \ge 2$. Without loss of generality,
we may assume that $|S_u| \ge |S_v|$. Also we let
$N_{T^*}(u) \setminus \{v\} = \{a_1, \ldots, a_m\}$ and
$S_u = \{a_{m + 1}, \ldots, a_{p - 1}\}$,
where $p = \deg_{T^{\mathrm{sopt}}} u$, and we note that $m \ge 1$. Partition
the set $N_{T^{\mathrm{sopt}}}(v) \setminus \{u\}$ into two subsets
$B = N_{T^*}(v) \setminus \{u\} = \{b_1, \ldots, b_q\}$ and
$S_v = \{c_1, \ldots, c_r\}$. Note that $q \ge 1$ and $r \ge 0$. Then for
the numbers $D_A$ and $D_C$ associated with $T^{\mathrm{sopt}}$ and defined
by~\eqref{kkos-equ1} we have
$$D_A = \sum\limits_{i = 1}^m \deg_{T^{\mathrm{sopt}}} a_i +
\sum\limits_{i = m + 1}^{p - 1} \deg_{T^{\mathrm{sopt}}} a_i \ge
m + |S_u| \ge 1 + |S_u| > |S_v| =
\sum\limits_{k = 1}^r \deg_{T^{\mathrm{sopt}}} c_k = D_C,$$
i.e., $D_A > D_C$. On the other hand, since $|S_u| = p - 1 - m$, $|S_v| = r$ and
$|S_u| \ge |S_v|$, it follows that $p \ge r + 1 + m$ and so $p > r + 1$
(since $m \ge 1$). Thus, all the conditions of Lemma~\ref{kkos-lemma1} hold,
implying the existence of a spanning tree $\widetilde{T}$ of $G$ such that
$s(\widetilde{T}) > s(T^{\mathrm{sopt}})$; the tree
$\widetilde{T}$ is obtained from $T^{\mathrm{sopt}}$ by the neighbor switch $\mathcal{S}_{v \rightarrow u}^B$, i.e., by deleting the edges
$vb_1, \ldots, vb_q$ and adding the edges $ub_1, \ldots, ub_q$ (notice that
$ub_i \in E(G)$ for $i = 1, \ldots, q$). This, however, contradicts the
optimality of $T^{\mathrm{sopt}}$. Thus, as claimed, $T^*$ is a star.
\end{proof}

\begin{claim}\label{kkos-claim3}
The central vertex of $T^*$ has the maximum number of neighbors from $I$ in
$T^{\mathrm{sopt}}$.
\end{claim}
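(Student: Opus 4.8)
The plan is to argue by contradiction using the neighbor switch of Lemma~\ref{kkos-lemma1}, exploiting the freedom to re-root the star $T^*$ anywhere inside the clique $K$. Write $k = |K| \ge 3$, let $c$ be the central vertex of the star $T^*$ provided by Claim~\ref{kkos-claim2}, and for $x \in K$ abbreviate $s_x = |S_x|$. By Claim~\ref{kkos-claim1} every vertex of $I$ is a leaf, so in $T^{\mathrm{sopt}}$ we have $\deg_{T^{\mathrm{sopt}}} c = (k-1) + s_c$, while every other $x \in K$ is a leaf of the star $T^*$ and hence satisfies $N_{T^{\mathrm{sopt}}}(x) = \{c\} \cup S_x$ and $\deg_{T^{\mathrm{sopt}}} x = 1 + s_x$. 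Suppose for contradiction that $c$ does not maximise the number of $I$-neighbours, i.e.\ there is a vertex $v \in K \setminus \{c\}$ with $s_v > s_c$; note $s_v \ge 1$.

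The key idea is to move the \emph{other} clique vertices off the centre $c$ and onto $v$, thereby turning $v$ into the new centre of the star. Concretely, I would apply the neighbor switch $\mathcal{S}_{c \rightarrow v}^{B}$ with $B = N_{T^*}(c) \setminus \{v\} = K \setminus \{c, v\}$. Because $K$ is a clique, every added edge $vx$ with $x \in B$ lies in $E(G)$, so the resulting tree $\widetilde{T}$ is again a spanning tree of $G$. Crucially, $c$ and $v$ are adjacent in $T^{\mathrm{sopt}}$, which will make the extra hypothesis $\alpha \ge \beta$ of Lemma~\ref{kkos-lemma1} vacuous.

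It then remains to check the hypotheses of Lemma~\ref{kkos-lemma1} in the notation of Section~\ref{switch:sec2.5}, with $v$ playing the role of the receiving vertex $u$ and $c$ the role of the donating vertex. Here $A = N_{T^{\mathrm{sopt}}}(v) \setminus \{c\} = S_v$, so $p = 1 + s_v \ge 2$ and $D_A = s_v$ (all members of $S_v$ are leaves). The donor's remaining neighbours split as $B = K \setminus \{c,v\}$ and $C = S_c$, giving $r = s_c$ and $D_C = s_c$. The assumption $s_v > s_c$ yields at once $D_A > D_C$ and $p = 1 + s_v \ge 2 + s_c > r + 1$, while $B \ne \emptyset$ since $k \ge 3$; adjacency of $c$ and $v$ removes the need for $\alpha \ge \beta$. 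Lemma~\ref{kkos-lemma1} therefore gives $s(\widetilde{T}) > s(T^{\mathrm{sopt}})$, contradicting optimality. Hence $s_c \ge s_x$ for all $x \in K$, which is exactly the assertion of the claim.

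I expect the only genuinely delicate point to be the bookkeeping that matches this ``re-centring'' operation to the exact $\mathcal{S}^{B}_{v \to u}$ template of Lemma~\ref{kkos-lemma1}: one must recognise that $v$ (not $c$) takes the role of the vertex that \emph{gains} neighbours, and that relocating the clique leaves is legal precisely because $K$ is a clique. Everything else is a short degree count, and the strict inequality $s_v > s_c$ between integers is exactly what simultaneously supplies $D_A > D_C$ and upgrades $p \ge r+1$, so no further case analysis is needed.
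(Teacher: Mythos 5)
Your argument is correct and coincides with the paper's own proof of this claim: the paper likewise assumes some $u\in V(T^*)$ has $|S_u|>|S_v|$ for the centre $v$, applies the neighbor switch that relocates the remaining clique vertices $B=N_{T^*}(v)\setminus\{u\}$ from the centre onto $u$, and verifies $D_A=|S_u|>|S_v|=D_C$ and $p>r+1$ exactly as you do (your $c$ and $v$ are the paper's $v$ and $u$). The only cosmetic difference is that the paper does not explicitly invoke adjacency to discharge the $\alpha\ge\beta$ hypothesis, whereas you do; both write-ups are otherwise the same computation.
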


\begin{proof}
By Claim~\ref{kkos-claim2}, the tree $T^*$ is a star. Since $|T^*| = |K|$
and $|K| \ge 3$, we let $v$ be the unique central vertex of $T^*$. Assume,
to the contrary, that there exists a vertex $u$ of $T^*$ distinct from $v$
such that $|S_u| > |S_v|$ hold. Then
$p = \deg_{T^{\mathrm{sopt}}} u \ge 2$. As in Claim~\ref{kkos-claim2}, we let
$N_{T^\mathrm{sopt}}(u) \setminus \{v\} = \{a_1, \ldots, a_{p - 1}\}$ and
partition the set $N_{T^{\mathrm{sopt}}}(v) \setminus \{u\}$ into two subsets
$B = N_{T^*}(v) \setminus \{u\} = \{b_1, \ldots, b_q\}$ and
$S_v = \{c_1, \ldots, c_r\}$. Note that $q \ge 1$, since $v$ is the central
vertex of the star $T^*$ and $|T^*| \ge 3$. Now we have
$$D_A = \sum\limits_{i = 1}^{p - 1} \deg_{T^{\mathrm{sopt}}} a_i = |S_u| >
|S_v| = \sum\limits_{k = 1}^r \deg_{T^{\mathrm{sopt}}} c_k = D_C,$$
i.e., $D_A > D_C$. On the other hand, since $|S_u| = p - 1$, $|S_v| = r$ and
$|S_u| > |S_v|$, it follows that $p > r + 1$. Thus, all the conditions of
Lemma~\ref{kkos-lemma1} are satisfied, implying (as in
Claim~\ref{kkos-claim2}) the existence of a spanning tree $\widetilde{T}$ of
$G$ such that $s(\widetilde{T}) > s(T^{\mathrm{sopt}})$, which is
impossible. Therefore, $|S_v| \ge |S_u|$ for each vertex $u$ of $T^*$.
\end{proof}

The central vertex of $T^*$ will be called the \emph{source} vertex of $T^{\mathrm{sopt}}$.

\begin{claim}\label{kkos-claim4}
The source vertex of $T^{\mathrm{sopt}}$ has degree $\Delta(G)$ in $T^{\mathrm{sopt}}$.
\end{claim}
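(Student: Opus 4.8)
The plan is to prove the two inequalities $\deg_{T^{\mathrm{sopt}}} v \le \Delta(G)$ and $\deg_{T^{\mathrm{sopt}}} v \ge \Delta(G)$ separately, where $v$ denotes the source. The first is immediate, since $T^{\mathrm{sopt}}$ is a spanning subgraph of $G$, so $\deg_{T^{\mathrm{sopt}}} v \le \deg_G v \le \Delta(G)$. Before attacking the reverse inequality I would record the structure already available: by Claims~\ref{kkos-claim1}--\ref{kkos-claim3} the tree $T^{\mathrm{sopt}}$ is a ``broom'', i.e. $v$ is adjacent in $T^{\mathrm{sopt}}$ to every other vertex of $K$ and to a set $S_v \subseteq I$ of leaves, every remaining clique vertex $x$ carries a pendant set $S_x \subseteq I$, the sets $S_x$ partition $I$, and $|S_v| \ge |S_x|$ for all $x$. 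I would also note that $\Delta(G)$ is attained at a clique vertex: since $K$ is a maximal clique, each vertex of $I$ has at most $|K|-1$ neighbours, while each vertex of $K$ has at least $|K|-1$ neighbours, whence $\Delta(G) = (|K|-1) + \max_{x \in K} |N_I(x)|$.

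The first main step is to show that the source uses \emph{all} of its neighbours, i.e. $S_v = N_I(v)$ and hence $\deg_{T^{\mathrm{sopt}}} v = \deg_G v$. Suppose some $b \in N_I(v)$ is attached in $T^{\mathrm{sopt}}$ to a clique vertex $x \ne v$. As $v$ and $x$ are adjacent in $T^{\mathrm{sopt}}$, moving $b$ from $x$ to $v$ is an (adjacent) neighbour switch $\mathcal{S}_{x \rightarrow v}^{\{b\}}$; using $|K| \ge 3$ together with $|S_v| \ge |S_x|$ one checks that the hypotheses $p > r+1$ and $D_A > D_C$ of Lemma~\ref{kkos-lemma1} hold, so $s$ strictly increases, contradicting optimality. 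Thus $N_I(v) \subseteq S_v$, and the reverse inclusion is trivial.

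It remains to prove that $v$ is a maximum-degree vertex, i.e. $|N_I(v)| = \max_{x \in K} |N_I(x)|$; this is the step I expect to be the main obstacle, because it cannot be reached by a single neighbour switch. My approach is to pass to a closed form for the $s$-metric of a broom. Writing $k = |K|$, $s_x = |S_x|$ and using the path-counting identity of Proposition~\ref{kkos-lemma3} (or a direct computation), every broom tree with centre $c$ satisfies
\begin{equation*}
s(T) = \big[(k-1)(k-1+|I|) + |I|\big] + s_c\,(k+|I|-2) + \sum_{x \ne c} s_x^2,
\end{equation*}
where the bracketed constant depends only on $k$ and $|I|$. Assuming for contradiction that $|N_I(v)| < |N_I(z)|$ for a clique vertex $z$, I would build a competing broom centred at $z$ that grabs all of $N_I(z)$, keeps $v$'s private neighbours $N_I(v)\setminus N_I(z)$ on $v$, and disturbs the pendant sets of the other clique vertices as little as possible, and compare its value with $s(T^{\mathrm{sopt}})$ through the displayed formula. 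The crux is that the centre's leaf-count carries the weight $k+|I|-2$, which strictly exceeds $|I| \ge |N_I(z)\setminus N_I(v)| + |N_I(v)\setminus N_I(z)|$ (the two differences being disjoint subsets of $I$); this linear gain should dominate the quadratic change caused by the redistributed leaves, giving $s > s(T^{\mathrm{sopt}})$ and the desired contradiction.

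The genuinely delicate point — and the main obstacle — is controlling the term $\sum_{x \ne c} s_x^2$ under re-centring, since re-rooting at $z$ may force $z$ to reclaim neighbours scattered over several clique vertices, shrinking their squared contributions, so the improvement is not visible from one monotone move. I would address this by choosing the competitor carefully (deciding, for each scattered neighbour of $z$, whether $z$ should reclaim it), exploiting the elementary inequality $|S_y| \le |I|/2$ for the pendant vertices of $T^{\mathrm{sopt}}$ (which follows from $|S_v| \ge |S_y|$ and $|S_v| + |S_y| \le |I|$, and makes relocation of a leaf onto the centre non-decreasing in $s$), so that the whole estimate reduces to the clean two-vertex exchange between $v$ and $z$ governed by the weight $k+|I|-2$ and the disjointness bound above.
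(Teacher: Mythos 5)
Your first two steps are correct, and step (ii) --- that $S_v=N_I(v)$ and hence $\deg_{T^{\mathrm{sopt}}}v=\deg_G v$ --- is exactly the paper's argument (delete $yz$ and add $vz$ for any $z\in S_y$ with $vz\in E(G)$, $y\ne v$, and invoke Lemma~\ref{kkos-lemma1}). The problem is your third step, and you have put your finger on precisely the right place: nothing so far shows that the source is a \emph{maximum-degree} vertex of $G$, and your proposed re-centring argument does not close this gap. The inequality you rely on --- that the centre's weight $W=k+|I|-2$ per leaf beats the quadratic losses incurred when the new centre must reclaim neighbours scattered over several clique vertices --- is false. Demoting the old centre $v$ already costs $s_v(W-s_v)$ (its $s_v$ pendant edges drop from contributing $s_vW$ to contributing $s_v^2$), while each leaf subsequently pulled onto the new centre from a pendant vertex $y$ recoups only $W-2s_y+1$, which is as small as $k-1$ when $s_y$ is near $|I|/2$; with only $|N_I(z)|$ such moves available, the books need not balance, and no choice of competitor repairs this.

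Indeed, the step cannot be completed because the assertion itself fails. Take $K=\{c,u,x_1,x_2\}$ and $I=C_0\cup B_1\cup B_2$ with $|C_0|=|B_1|=|B_2|=6$; join $c$ to $C_0$, join $x_i$ to $B_i$, and join $u$ to a set $U$ consisting of four vertices of $B_1$ and three of $B_2$. Then $\Delta(G)=\deg_G u=10$ while $\deg_G c=\deg_G x_1=\deg_G x_2=9$. By Claims~\ref{kkos-claim1}--\ref{kkos-claim3} every $s$-optimal tree is a ``broom'' with value $81+20\,s_\gamma+\sum_{y\ne\gamma}s_y^2$ for centre $\gamma$ (here $k=4$, $|I|=18$, $W=20$); optimizing the assignment for each possible centre gives $273$ for centre $c$, $x_1$ or $x_2$ (each clique vertex keeping its own six $I$-neighbours and $u$ being a leaf) versus at most $270$ for centre $u$ (attained by $S_u=U$). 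Hence the source of every $s$-optimal tree has degree $9<\Delta(G)$. Be aware that the paper's own proof in fact establishes only your step (ii), i.e.\ $\deg_{T^{\mathrm{sopt}}}x^*=\deg_G x^*$, and then silently identifies $\deg_G x^*$ with $\Delta(G)$; so your instinct that this identification is the real obstacle was sound, but the statement as written is not provable, and any use of it downstream (e.g.\ in Theorem~\ref{kkos-theorem5}) needs to be re-examined.
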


\begin{proof}
Let $x^* \in V(T^*)$ be
the source vertex of $T^{\mathrm{sopt}}$. From Claim~\ref{kkos-claim3} we
know that $|S_{x^*}| \ge |S_x|$ for each vertex $x \in V(T^*)$. If we
assume that there exist vertices $y \in V(T^*) \setminus \{x^*\}$ and
$z \in S_y$ such that $x^*z \in E(G)$, then we can again apply
Lemma~\ref{kkos-lemma1} to construct a spanning tree $\widetilde{T}$ of $G$
such that $s(\widetilde{T}) > s(T^{\mathrm{sopt}})$ by deleting
the edge $yz$ of $T^{\mathrm{sopt}}$ and adding the edge $x^*z$. This
contradiction leads to the conclusion that $x^*$ has degree $\Delta(G)$ in
$T^{\mathrm{sopt}}$.
\end{proof}

Recall that we have assumed, without loss of generality, that $K$ is a
maximal clique of a split graph $G$, i.e., $I$ does not contain a vertex adjacent to all vertices of $K$.
This means that the split partition $(K, I)$ of $G$ is chosen to maximize
$|K|$, and consequently, $|K| = \omega(G)$, where $\omega(G)$ is the \emph{clique
number} of the graph $G$, i.e., the cardinality of a maximum clique of $G$.

We are now in a position to prove the main result of this section.

\begin{theorem}\label{kkos-theorem5}
If $G$ is a split graph of order $n$ having maximum degree
$\Delta(G) = \Delta$ and clique number $\omega(G) = \omega$, then
$$
\max\{4n - 8,\, 2n + (\Delta - 1)^2 - 3\} \le \tau_2(G) \le \min\{(n - 1)^2,\,
(\Delta - \omega + 2)(n + \Delta(\omega - 1) - 1) - \Delta\}.
$$
\end{theorem}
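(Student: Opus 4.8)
The plan is to leverage the structural description of $T^{\mathrm{sopt}}$ furnished by Claims~\ref{kkos-claim1}--\ref{kkos-claim4} to obtain an exact expression for $\tau_2(G)$, and then read both pairs of bounds off this expression. After disposing of the cases $|K| = 1$ and $|K| = 2$ directly (as already done above), I would assume $|K| = \omega \ge 3$. By Claim~\ref{kkos-claim1} every vertex of $I$ is a leaf, by Claim~\ref{kkos-claim2} the clique part $T^*$ is a star, and by Claims~\ref{kkos-claim3}--\ref{kkos-claim4} its centre $x^*$ has degree $\Delta$ and carries the largest number $s_0 = \Delta - \omega + 1$ of leaves from $I$. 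Writing $s_1, \dots, s_{\omega - 1}$ for the numbers of $I$-leaves attached to the remaining clique vertices $y_1, \dots, y_{\omega-1}$ (so $\deg_{T^{\mathrm{sopt}}} y_i = 1 + s_i$), I would first record the two side conditions $\sum_{i=1}^{\omega-1} s_i = n - \Delta - 1$ and $0 \le s_i \le s_0 = \Delta - \omega + 1$, the latter coming from Claim~\ref{kkos-claim3}.

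The first real step is to compute $s(T^{\mathrm{sopt}})$ edge by edge. Grouping the $\Delta$ edges incident to $x^*$ and the $n-\Delta-1$ pendant edges at the $y_i$, a short calculation collapses all the $\Delta$-terms and yields the exact formula
\begin{equation*}
\tau_2(G) = \Delta(n-1) + (n - \Delta - 1) + \sum_{i=1}^{\omega-1} s_i^2.
\end{equation*}
Every bound then follows from estimating the single quantity $\sum s_i^2$. For the lower bounds I would use $\sum s_i^2 \ge 0$ to get $\tau_2(G) \ge \Delta(n-1) + (n - \Delta - 1)$, and then note that the difference between this and $2n + (\Delta-1)^2 - 3$ factors as $(\Delta - 1)(n - \Delta - 1) \ge 0$; together with $\tau_2(G) \ge 4n - 8$ from Corollary~\ref{kkos-theorem3} this gives the stated maximum. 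For the upper bounds I would use $s_i \le \Delta - \omega + 1$ to obtain $\sum s_i^2 \le (\Delta - \omega + 1)\sum s_i = (\Delta - \omega + 1)(n - \Delta - 1)$, hence $\tau_2(G) \le \Delta(n-1) + (n - \Delta - 1)(\Delta - \omega + 2)$; combined with $\tau_2(G) \le (n-1)^2$ from Corollary~\ref{kkos-theorem3} this will give the stated minimum after a rewriting step.

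The one ingredient that is not pure bookkeeping, and which I expect to be the main obstacle, is matching the last estimate to the closed form $(\Delta - \omega + 2)(n + \Delta(\omega - 1) - 1) - \Delta$ appearing in the theorem. A direct expansion shows that this closed form minus $\Delta(n-1) + (n-\Delta-1)(\Delta-\omega+2)$ equals $\Delta\bigl((\Delta - \omega + 2)\omega - n\bigr)$, so it suffices to prove the auxiliary inequality $(\Delta - \omega + 2)\omega \ge n$ for every connected split graph. I would establish this from connectivity by a counting argument: each of the $n - \omega$ vertices of $I$ sends at least one edge into $K$, so some clique vertex receives at least $(n-\omega)/\omega$ such edges and thus has degree at least $(\omega - 1) + (n-\omega)/\omega$; since this is at most $\Delta$, rearranging yields exactly $(\Delta-\omega+2)\omega \ge n$. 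Finally, tracing the equality conditions ($\sum s_i^2 = 0$ for the lower bound; all $s_i$ equal to $\Delta-\omega+1$ together with $(\Delta-\omega+2)\omega = n$ for the upper bound; and the universal-vertex case for $(n-1)^2$) identifies the extremal graphs and shows that the bounds are sharp.
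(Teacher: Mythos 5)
Your proof is correct, and it reorganizes the paper's argument in a way that is worth noting. Both proofs rest on the same structural foundation (Claims~\ref{kkos-claim1}--\ref{kkos-claim4}), but where the paper bounds three groups of edge contributions separately from above and below (inequalities~\eqref{kkos-equ8}--\eqref{kkos-equ10}) and simply sums them, you first collapse everything into the exact identity $s(T^{\mathrm{sopt}}) = \Delta(n-1) + (n-\Delta-1) + \sum_{i=1}^{\omega-1} s_i^2$ and then estimate the single term $\sum s_i^2$ via $0 \le s_i \le \Delta-\omega+1$ and $\sum s_i = n-\Delta-1$. I checked the identity and both of your factorizations: the lower-bound gap is indeed $(\Delta-1)(n-\Delta-1) \ge 0$, and the upper-bound gap against the theorem's closed form is indeed $\Delta\bigl((\Delta-\omega+2)\omega - n\bigr)$. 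The price of your tighter intermediate bounds is precisely that last gap: you must separately prove $(\Delta-\omega+2)\omega \ge n$, an inequality the paper never needs because its groupwise estimates land directly on the stated closed form. Your pigeonhole argument for it (each of the $n-\omega$ vertices of $I$ sends an edge into $K$, so some clique vertex has degree at least $(\omega-1)+(n-\omega)/\omega \le \Delta$) is correct and is a genuine extra ingredient. What your route buys is the sharper intermediate bounds $\Delta(n-1)+(n-\Delta-1) \le \tau_2(G) \le \Delta(n-1)+(\Delta-\omega+2)(n-\Delta-1)$ and a transparent view of when equality holds, which essentially hands you Theorem~\ref{kkos-theorem6} for free; what the paper's route buys is that it avoids the auxiliary connectivity inequality altogether. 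One small caveat: like the paper, you should confirm that the $|K|\le 2$ cases dispatched before the claims actually satisfy the stated inequalities, since the claims (and hence your identity) are only available for $|K|\ge 3$.
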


\begin{proof}
Let $T^{\mathrm{sopt}}$ be an $s$-optimal tree of $G$ and let
$x^* \in V(T^*)$ be the source vertex of $T^{\mathrm{sopt}}$. By
Claims~\ref{kkos-claim2} and~\ref{kkos-claim4}, the vertex $x^*$ has exactly
$\Delta - \omega + 1$ neighbors from $I$ in $T^{\mathrm{sopt}}$.
Since by Claim~\ref{kkos-claim1} all the vertices of $I$ are leaves in
$T^{\mathrm{sopt}}$, it follows that
\begin{equation}\label{kkos-equ8}
\sum\limits_{x^*y\, \in\, E(T^{\mathrm{sopt}})} \deg_{T^{\mathrm{sopt}}} x^*
\deg_{T^{\mathrm{sopt}}} y = \Delta(\Delta - \omega + 1),
\end{equation}
where the vertex $y$ in the subscript of the sum runs over the set
$S_{x^*}$.

Let $z$ be any of the remaining
$|I| - (\Delta - \omega + 1) = n - \Delta - 1$ leaves of
$T^{\mathrm{sopt}}$ in $I$ and let $z \in S_x$ for some vertex
$x \in V(T^*) \setminus \{x^*\}$. Obviously, the degree of $x$ is at least 2
in $T^{\mathrm{sopt}}$. On the other hand, this degree does not exceed
$\Delta - \omega + 2$, since otherwise $|S_x| > |S_{x^*}|$, which is
impossible by Claim~\ref{kkos-claim3}. Hence,
\begin{equation}\label{kkos-equ9}
2(n - \Delta - 1) \le \sum\limits_{xz\, \in\, E(T^{\mathrm{sopt}})}
\deg_{T^{\mathrm{sopt}}} x \deg_{T^{\mathrm{sopt}}} z \le
(\Delta - \omega + 2)(n - \Delta - 1),
\end{equation}
where the vertices $x$ and $z$ in the subscript of the sum run over the sets
$V(T^*) \setminus \{x^*\}$ and $S_x$ respectively.

Now let $x$ be any of the $\omega - 1$ vertices in
$V(T^*) \setminus \{x^*\}$. Note that $x^*x \in E(T^{\mathrm{sopt}})$, since
$T^*$ is a star due to Claim~\ref{kkos-claim2}. Thus, the degree of $x$ is
at least 1 in $T^{\mathrm{sopt}}$. On the other hand, as we saw above, this
degree does not exceed $\Delta - \omega + 2$. Hence,
\begin{equation}\label{kkos-equ10}
\Delta(\omega - 1) \le \sum\limits_{x^*x\, \in\, E(T^{\mathrm{sopt}})}
\deg_{T^{\mathrm{sopt}}} x^* \deg_{T^{\mathrm{sopt}}} x \le
\Delta(\Delta - \omega + 2)(\omega - 1),
\end{equation}
where the vertex $x$ in the subscript of the sum runs over the set
$V(T^*) \setminus \{x^*\}$.

Summation of~\eqref{kkos-equ8},~\eqref{kkos-equ9} and~\eqref{kkos-equ10},
upon little simplification, yields the following inequalities for $\tau_2(G)$:
$$
2n + (\Delta - 1)^2 - 3 \le \tau_2(G) \le
(\Delta - \omega + 2)(n + \Delta(\omega - 1) - 1) - \Delta.
$$
The final result now follows by applying Corollary~\ref{kkos-theorem3}.
\end{proof}

The following result characterizes connected split graphs for which the
upper and lower bounds for $\tau_2(G)$ in Theorem~\ref{kkos-theorem5} are
achieved.

\begin{theorem}\label{kkos-theorem6}
Let $G$ be a connected split graph of order $n$ having maximum degree
$\Delta(G) = \Delta$ and clique number $\omega(G) = \omega$. Then
\begin{itemize}
\item[$(\mathrm{i})$] $\tau_2(G) = \min\{(n - 1)^2,\,
(\Delta - \omega + 2)(n + \Delta(\omega - 1) - 1) -
\Delta\}$ if and only if one of the following conditions holds:
\begin{itemize}
\item[$(\mathrm{a})$] $n = \omega(t + 1)$ and
$G = K_{\omega} \circ \overline{K}_t$ for some integers $\omega \ge 1$ and
$t \ge 0$;
\item[$(\mathrm{b})$] $G$ has a universal vertex.
\end{itemize}
\item[$(\mathrm{ii})$] $\tau_2(G) = \max\{4n - 8,\, 2n + (\Delta - 1)^2 - 3\}$
if and only one of the following conditions holds:
\begin{itemize}
\item[$(\mathrm{c})$] $G = P_4$;
\item[$(\mathrm{d})$] $G$ has a universal vertex.
\end{itemize}
\end{itemize}
\end{theorem}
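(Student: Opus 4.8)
The plan is to read both characterisations directly off the three inequalities \eqref{kkos-equ8}, \eqref{kkos-equ9}, \eqref{kkos-equ10} from the proof of Theorem~\ref{kkos-theorem5}, combined with Corollary~\ref{kkos-theorem3}. Writing $x^*$ for the source vertex and $t:=\Delta-\omega+1$ for its number of neighbours in $I$, the edge set of $T^{\mathrm{sopt}}$ partitions into the $t$ edges from $x^*$ to its leaves (the fixed quantity \eqref{kkos-equ8}), the edges from the remaining clique vertices to their leaves (the sum bounded in \eqref{kkos-equ9}), and the $\omega-1$ star edges $x^*x$ (the sum bounded in \eqref{kkos-equ10}). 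Hence $s(T^{\mathrm{sopt}})$ equals the upper (resp.\ lower) bound of Theorem~\ref{kkos-theorem5} exactly when the two sums in \eqref{kkos-equ9} and \eqref{kkos-equ10} simultaneously hit their upper (resp.\ lower) ends, and I will translate each equality regime into a structural statement about $G$. Throughout I assume $|K|=\omega\ge 3$ so that Claims~\ref{kkos-claim1}--\ref{kkos-claim4} apply; the cases $\omega\in\{1,2\}$ give a star or a double star and are settled by the explicit values $\tau_2(K_{1,n-1})=(n-1)^2$ and $\tau_2(S_{m,n})=(m+n+1)^2-mn$ recorded above.

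For the upper bound in part $(\mathrm{i})$, if $\tau_2(G)=(n-1)^2$ then Corollary~\ref{kkos-theorem3} forces a universal vertex and $(\mathrm{b})$ holds. Otherwise $\tau_2(G)<(n-1)^2$, so the minimum is attained at $U:=(\Delta-\omega+2)(n+\Delta(\omega-1)-1)-\Delta$ with $\tau_2(G)=U$, whence both sums sit at their upper ends. The upper end of \eqref{kkos-equ10} forces $\deg_{T^{\mathrm{sopt}}}x=\Delta-\omega+2$ for every $x\ne x^*$, i.e.\ $|S_x|=t$; together with $|S_{x^*}|=t$ from Claims~\ref{kkos-claim2} and~\ref{kkos-claim4} this gives $|S_x|=t$ for all $x\in V(T^*)$ and $n=\omega(t+1)$. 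The essential step is to lift this tree-level information to $G$: as $x^*$ realises $\Delta(G)$ we have $\deg_G x^*=\Delta$, so $|N_G(x^*)\cap I|=t$; and for any other clique vertex $x$, $(\omega-1)+|N_G(x)\cap I|=\deg_G x\le\Delta$ yields $|N_G(x)\cap I|\le t$, while $S_x\subseteq N_G(x)\cap I$ gives $|N_G(x)\cap I|\ge t$, so $N_G(x)\cap I=S_x$. Since the sets $S_x$ partition $I$ by Claim~\ref{kkos-claim1}, every vertex of $I$ has a unique neighbour in $K$, which is precisely $G=K_\omega\circ\overline{K}_t$, i.e.\ $(\mathrm{a})$. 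Conversely $(\mathrm{b})$ gives $\tau_2(G)=(n-1)^2$, equal to the minimum because $\tau_2(G)\le\min\{(n-1)^2,U\}\le(n-1)^2$; and for $(\mathrm{a})$ one takes the spanning tree formed by the star on $K$ centred at a clique vertex together with all $n-\omega$ pendant edges, computes $s(T)=U$ by a direct edge count, and checks $U\le(n-1)^2$, so $\tau_2(G)=U=\min\{(n-1)^2,U\}$.

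For the lower bound in part $(\mathrm{ii})$, set $L:=2n+(\Delta-1)^2-3$. If $\tau_2(G)=4n-8$ then Corollary~\ref{kkos-theorem3} gives $G\cong P_n$ or $G\cong C_n$; since $P_n$ $(n\ge 5)$ and $C_n$ $(n\ge 4)$ are not split, the only connected split possibilities are $P_3$, $C_3=K_3$ and $P_4$, where the first two have a universal vertex (case $(\mathrm{d})$) and $P_4$ is case $(\mathrm{c})$. Otherwise $\tau_2(G)=L>4n-8$, so both sums are at their lower ends; the lower end of \eqref{kkos-equ10} forces $\deg_{T^{\mathrm{sopt}}}x=1$, hence $S_x=\emptyset$, for all $x\ne x^*$, so every vertex of $I$ is attached to $x^*$ and $\deg_{T^{\mathrm{sopt}}}x^*=(\omega-1)+(n-\omega)=n-1$, i.e.\ $x^*$ is universal and $(\mathrm{d})$ holds. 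Conversely $(\mathrm{c})$ is immediate since $P_4$ has the unique spanning tree $P_4$ with $s(P_4)=8=\max\{4n-8,L\}$; and for $(\mathrm{d})$ we have $\Delta=n-1$, so $L=(n-1)^2$ and $\tau_2(G)=(n-1)^2=\max\{4n-8,(n-1)^2\}$ because $(n-1)^2-(4n-8)=(n-3)^2\ge 0$.

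The step I expect to be the main obstacle is the forward direction of $(\mathrm{i})(\mathrm{a})$: the inequalities only certify that the optimal \emph{tree} is corona-like, and upgrading this to an exact description of $G$ needs the degree bookkeeping in $G$ above, where the maximality of $K$ and the fact that $x^*$ attains $\Delta(G)$ are used to exclude any extra $K$--$I$ edge. A minor but necessary point is that conditions $(\mathrm{a})$--$(\mathrm{d})$ overlap---for instance $K_\omega\circ\overline{K}_0=K_\omega$ and $K_1\circ\overline{K}_t=K_{1,t}$ both have universal vertices, and $P_4=K_2\circ\overline{K}_1$ meets both $(\mathrm{a})$ and $(\mathrm{c})$---so the statements must be read as ``at least one of'' rather than as a partition.
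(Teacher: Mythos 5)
Your proposal is correct and follows essentially the same route as the paper: both arguments extract the extremal structure from the equality conditions in inequalities~\eqref{kkos-equ8}--\eqref{kkos-equ10} of Theorem~\ref{kkos-theorem5}, invoke Corollary~\ref{kkos-theorem3} for the $(n-1)^2$ and $4n-8$ branches, and lift the tree-level conclusion $|S_x|=\Delta-\omega+1$ to $G=K_\omega\circ\overline{K}_t$ by the same degree bookkeeping (no $K$--$I$ edge can leave $S_x$ without exceeding $\Delta$). The only cosmetic difference is in part~(ii), where you read universality of $x^*$ directly off equality in the lower end of~\eqref{kkos-equ10} (all other clique vertices are leaves of $T^{\mathrm{sopt}}$), whereas the paper phrases the same inequality as a contradiction yielding an extra $+\Delta$; both treatments leave the $\omega=2$ double-star verification equally implicit.
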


\begin{proof}
(i) The sufficiency part follows immediately by an easy direct calculation
of $\tau_2(G)$ for the graphs that satisfy the conditions (a) or (b).

Now we prove the necessity part of (i). If the minimum is $(n - 1)^2$, i.e.,
$\tau_2(G) = (n - 1)^2$, then by Corollary~\ref{kkos-theorem3}, $G$ contains a
universal vertex. Thus, $\Delta = n - 1$ and taking into account that
$n \ge \omega$ and $\omega \ge 1$, we have
$$(\Delta - \omega + 2)(n + \Delta(\omega - 1) - 1) - \Delta =
(n - 1)^2 + (n - 1)(\omega - 1)(n - \omega) \ge (n - 1)^2,$$
which is correct in the case when the minimum is equal to $(n - 1)^2$.
Therefore, the condition (b) holds.

Let the minimum is equal to
$(\Delta - \omega + 2)(n + \Delta(\omega - 1) - 1) - \Delta$, i.e.,
$$\tau_2(G) = (\Delta - \omega + 2)(n + \Delta(\omega - 1) - 1) - \Delta.$$
We may assume, without loss of generality, that $G$ contains no universal 
vertices and $|K| \ge 3$ (since if $|K| = 1$ or 2, then $G$ is a star
$K_{1, n - 1}$ or a double star $K_2 \circ\overline{K}_t$, where
$t = (n - 2)/2$, respectively). Let $T^{\mathrm{sopt}}$ be an $s$-optimal
tree of $G$ and let $x^* \in K$ (note also that $K = V(T^*)$)
be the source vertex of $T^{\mathrm{sopt}}$. By Claim~\ref{kkos-claim4},
$\deg_{T^\mathrm{sopt}} x^* = \deg_G x^* = \Delta$, and consequently
$|S_{x^*}| = \Delta - \omega + 1$. Moreover, from the proof of
Theorem~\ref{kkos-theorem5} we infer that $|S_x| = \Delta - \omega + 1$ for
each vertex $x \in K\setminus\{x^*\}$. Hence,
$\deg_G x = |S_x| + \omega - 1 = \Delta$ for each such vertex. Besides,
$S_x \cap S_y = \emptyset$ for any two vertices $x$ and $y$ in $K$, since
all vertices in $I$ are leaves of $T^{\mathrm{sopt}}$ by
Claim~\ref{kkos-claim1}. Therefore, there exists a partition
$\cup_{x \in K} S_x$ of $I$ such that $|S_x| = \Delta - \omega + 1$ for
each vertex $x \in K$. Note that there is no edge of $G$ connecting a vertex
$x$ in $K$ to a vertex in $S_y$ for any two distinct vertices $x, y \in K$,
since otherwise $\deg_G x > \Delta$, which is impossible. Thus, we have
$G = K_{\omega} \circ \overline{K}_t$, where $t = \Delta - \omega + 1$ and
$n = \omega(t + 1)$, and $G$ satisfies the condition (a).

(ii) As above, the sufficiency part follows immediately by an easy direct
calculation of $\tau$ for the graphs that satisfy the conditions (c) or (d).

Let us prove the necessity part of (ii). If the maximum is $4n - 8$, i.e.,
$\tau_2(G) = 4n - 8$, then by Corollary~\ref{kkos-theorem3}, $G$ is either $P_n$
or $C_n$ for $n \ge 3$. Since $G$ is a split graph, $G \in \{P_4, C_3\}$ and
so the conditions (c) or (d) hold. At the same time, since $\Delta = 2$ and
$n \ge 3$, we have $$2n + (\Delta - 1)^2 - 3 = 2n - 2 \le 4n - 8,$$ which is
correct in the case when the maximum is equal to $4n - 8$.

Now let the maximum is equal to $2n + (\Delta - 1)^2 - 3$, i.e.,
$\tau_2(G) = 2n + (\Delta - 1)^2 - 3$. We may assume, without loss of
generality, that $\Delta > 0$, since if $\Delta = 0$, then $\tau_2(G) = 0$ and
$G$ is $K_1$ which satisfies the condition (d). In the same manner we can
assume that $n \ge 3$. Let $T^{\mathrm{sopt}}$ be an $s$-optimal tree of
$G$ and let $x^* \in K$ be the source vertex of
$T^{\mathrm{sopt}}$. By Claim~\ref{kkos-claim4},
$\deg_{T^\mathrm{sopt}} x^* = \deg_G x^* = \Delta$. We show that $x^*$ is a
universal vertex of $G$. Assume, to the contrary, that $x^*$ is not a
universal vertex. Then there is a vertex $x \in K\setminus \{x^*\}$ that is
adjacent to some vertex $z \in S_x$ and $x^*z \not\in E(G)$. But then from
the proof of Theorem~\ref{kkos-theorem5} the left hand-side of~\eqref{kkos-equ10} would be $2\Delta + (\omega - 2)\Delta$ and the sum of~\eqref{kkos-equ8}--~\eqref{kkos-equ10} would imply
\begin{equation*}
\begin{split}
\tau_2(G)
&\ge \Delta(\Delta - \omega + 1) + 2(n - \Delta - 1) + 2\Delta +
(\omega - 2)\Delta \\
&= 2n + (\Delta - 1)^2 - 3 + \Delta > 2n + (\Delta - 1)^2 - 3,
\end{split}
\end{equation*}
which is contradiction. Thus, $x^*$ is a universal vertex of graph
$G$. Consequently, $G$ satisfies the condition (d). For completeness we
note that in this case
$$2n + (\Delta - 1)^2 - 3 = (n - 1)^2 \ge 4n - 8,$$
since $\Delta = n - 1$ and $n \ge 3$.
\end{proof}

The obtained structural characterization can be used to establish the complexities of \textsc{$s$-SF Spanning Tree} and \textsc{$m$-SF Spanning Tree} problems when restricted to split graphs. First note that proofs of Claims \ref{kkos-claim1}-\ref{kkos-claim4} rely on a neighbor switch, satisfying $p > r + 1$ in each particular case. Therefore Lemma \ref{kkos-lemma2} implies the following corollary. 
\begin{corollary}\label{kkos-cor3}
Claims \ref{kkos-claim1}~--~\ref{kkos-claim4} similarly hold for an $m$-optimal tree of a split graph $G$.
\end{corollary}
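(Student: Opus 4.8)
The plan is to re-run the arguments of Claims~\ref{kkos-claim1}--\ref{kkos-claim4} essentially verbatim, working with an $m$-optimal tree $T^{\mathrm{mopt}}$ in place of the $s$-optimal tree $T^{\mathrm{sopt}}$ and replacing each appeal to Lemma~\ref{kkos-lemma1} or Corollary~\ref{kkos-corollary1} by an appeal to Lemma~\ref{kkos-lemma2}. All the structural objects used there---the subtree $T^*$ obtained by deleting the leaves lying in $I$, the sets $S_x$, and the central (source) vertex of $T^*$---are defined solely in terms of the optimal tree and the split partition $(K,I)$, so they carry over to $T^{\mathrm{mopt}}$ once the analogue of Claim~\ref{kkos-claim1} is in hand. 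Accordingly I would establish the four statements in the same order, each relying on the structure secured by its predecessors.

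The crux is that every neighbor switch $\mathcal{S}_{v\to u}^{B}$ used in the proofs of Claims~\ref{kkos-claim1}--\ref{kkos-claim4} satisfies the strict inequality $p>r+1$, which is exactly the hypothesis of Lemma~\ref{kkos-lemma2}. For Claim~\ref{kkos-claim1} the switch is a total neighbor switch, so $r=0$ while $p=\deg u\ge 2$; for Claims~\ref{kkos-claim2} and~\ref{kkos-claim3} the inequality $p>r+1$ is already derived explicitly from the degree accounting on $T^*$. For Claim~\ref{kkos-claim4}, the switch moves a vertex $z$ from a leaf $y$ of the star $T^*$ to its centre $x^*$, so $p=\deg x^*$ and $r=\deg y-2$; thus $p>r+1$ amounts to $\deg x^*\ge\deg y$, which holds because $\deg_{T^*}x^*=\omega-1\ge 2$ (as $|K|=\omega\ge 3$) while $\deg_{T^*}y=1$, and $|S_{x^*}|\ge|S_y|$ by the analogue of Claim~\ref{kkos-claim3}.

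Since in each case $p>r+1$, Lemma~\ref{kkos-lemma2} gives $m(\widetilde{T})>m(T^{\mathrm{mopt}})$ for the switched tree $\widetilde{T}$, which is a spanning tree of $G$ because the newly added edges lie in $E(G)$ exactly as in the $s$-optimal arguments; this contradicts the $m$-optimality of $T^{\mathrm{mopt}}$ and proves each claim. I note that Lemma~\ref{kkos-lemma2} requires only $p>r+1$, and none of the auxiliary hypotheses $D_A>D_C$ or $\alpha\ge\beta$ on which Lemma~\ref{kkos-lemma1} relies, so the $m$-metric arguments are if anything shorter than their $s$-metric counterparts. The one point needing genuine care is confirming the strict inequality $p>r+1$ in Claim~\ref{kkos-claim4}: there the $s$-optimal proof invoked Lemma~\ref{kkos-lemma1} and so a priori needed only $p\ge r+1$, and one must check that the source vertex of a star $T^*$ with $\omega\ge 3$ indeed forces the strict gap $\deg x^*>\deg y$, as indicated above.
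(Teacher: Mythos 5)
Your proposal is correct and matches the paper's own argument: the authors likewise justify the corollary by observing that every neighbor switch in the proofs of Claims~\ref{kkos-claim1}--\ref{kkos-claim4} satisfies $p>r+1$, so Lemma~\ref{kkos-lemma2} can replace Lemma~\ref{kkos-lemma1} and Corollary~\ref{kkos-corollary1} throughout. Your case-by-case verification of $p>r+1$ (in particular the check for Claim~\ref{kkos-claim4}, where the original argument only needed $p\ge r+1$) is exactly the detail the paper leaves implicit.
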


\begin{theorem}\label{kkos-theorem8}	
The \textsc{$s$-SF Spanning Tree} and \textsc{$m$-SF Spanning Tree} problems are $\mathrm{NP}$-hard for split graphs.
\end{theorem}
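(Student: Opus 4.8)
The plan is to reduce from \textsc{3-DM}, re-using the gadget from the proof of Theorem~\ref{kkos-theorem7} but turning it into a split graph. Given an instance $Q = (X, Y, Z, \mathcal{M})$ with $|\mathcal{M}| = m$ and $|X| = |Y| = |Z| = n$, I would build $G' = G'_Q$ on the vertex set $\{r\} \cup A \cup B$ with $A = \mathcal{M}$ and $B = X \cup Y \cup Z$, keeping the edges $ax, ay, az$ for each $a = \{x,y,z\} \in A$, but now declaring $K = A \cup \{r\}$ to be a clique (adding all missing edges inside it) and $I = B$ to be an independent set. Then $G'$ is a split graph with split partition $(K, I)$; it is connected once every element lies in some triple (which may be assumed after trivial preprocessing), $K = A \cup \{r\}$ is a maximal clique of size $\omega = m+1$ (no vertex of $B$ is adjacent to $r$, so none can extend it), and $\Delta = m+3$ is attained exactly by the vertices of $A$ (each adjacent to the $m$ vertices of $K \setminus \{a\}$ and to its three elements), while $r$ and every $b \in B$ have degree at most $m$. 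Hence the capacity parameter is $\Delta - \omega + 1 = 3$.

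Next I would invoke the structural description of optimal trees. By Claim~\ref{kkos-claim1}, every vertex of $I = B$ is a leaf of $T^{\mathrm{sopt}}$, attached to a single clique vertex $x$ with the corresponding edge present in $G'$, i.e.\ to a triple containing it; by Claim~\ref{kkos-claim2} the clique part $T^*$ is a star centred at the source $x^*$; and by Claim~\ref{kkos-claim4} the source satisfies $\deg_{T^{\mathrm{sopt}}} x^* = \Delta$, which forces $x^* \in A$ and $|S_{x^*}| = \Delta - (\omega - 1) = 3$ (note $r$ cannot be the source, its tree-degree being at most $m$). Writing $S_x$ for the set of $B$-leaves attached to $x$, the $\{S_x\}_{x \in K}$ partition $B$, each $|S_x| \le 3$ by adjacency, and $\sum_{x \ne x^*}|S_x| = 3n-3$. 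An edge-by-edge evaluation of the $s$-metric of such a tree then collapses to $s(T^{\mathrm{sopt}}) = c + \sum_{x \in K \setminus \{x^*\}} |S_x|^2$ for an explicit constant $c = c(n,m)$: the star edges and the edges from $x^*$ to its leaves contribute fixed amounts once $|S_{x^*}| = 3$ and $\sum_{x \ne x^*}|S_x|$ are fixed, and only the leaf edges at the remaining clique vertices produce the variable quadratic term. By Corollary~\ref{kkos-cor3} (together with Lemma~\ref{kkos-lemma2}) the identical structure, and an analogous collapse $m(T^{\mathrm{mopt}}) = c' + \sum_{x \ne x^*}|S_x|^2$, holds for the $m$-optimal tree, so a single construction settles both problems simultaneously.

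The heart of the argument is then the purely arithmetic claim that $\max \sum_{x \ne x^*} |S_x|^2 = 9(n-1)$, attained if and only if $Q$ admits a perfect $3$-dimensional matching. The upper bound follows from $|S_x|^2 \le 3|S_x|$ (as each $|S_x| \le 3$), giving $\sum_{x \ne x^*}|S_x|^2 \le 3(3n-3) = 9(n-1)$, with equality precisely when every $|S_x| \in \{0,3\}$; combined with $|S_{x^*}| = 3$ this means $B$ is partitioned into $n$ pairwise disjoint full triples, that is, a perfect matching (of which $x^*$ is one triple). Conversely, a perfect matching yields exactly such an all-or-nothing assignment and hence the value $9(n-1)$. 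Consequently $\tau_2(G') = c + 9(n-1)$ (resp.\ $\tau_1(G') = c' + 9(n-1)$) if and only if $Q$ is a yes-instance, and both thresholds are computable in polynomial time, which proves NP-hardness for split graphs.

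I expect the main obstacle to be the careful bookkeeping in the second paragraph: verifying that the source is necessarily a triple-vertex of degree $\Delta$, that $K = A \cup \{r\}$ is indeed a maximal clique with $\omega = m+1$ so that the capacity $3$ and the constant $c$ come out exactly, and that the edge contributions genuinely reduce to $\sum_{x \ne x^*}|S_x|^2$ up to a fixed additive constant for \emph{both} metrics. Once that reduction is secured, the sum-of-squares characterization of perfect matchings is routine.
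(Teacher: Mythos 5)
Your proposal is correct and follows essentially the same route as the paper: the identical split graph $H_Q$ obtained by completing $\{r\}\cup A$ into a clique, the same appeal to Claims~\ref{kkos-claim1}--\ref{kkos-claim4} and Corollary~\ref{kkos-cor3} to pin down the star structure with an $A$-vertex of degree $\Delta=m+3$ as source, and the same conclusion that the optimal value is attained precisely when the leaf sets form a perfect $3$-dimensional matching. The only cosmetic difference is that you evaluate $s(T)$ edge by edge and isolate the variable term as $\sum_{x\ne x^*}|S_x|^2$, whereas the paper uses its path-counting formula (Proposition~\ref{kkos-lemma3}) to express the same quantity as a constant plus $6\delta_4+2\delta_3$; the two bookkeepings coincide since $\sum|S_x|^2=(3n-3)+6\delta_4+2\delta_3$ under the constraint $\sum_{x\ne x^*}|S_x|=3n-3$.
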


\begin{proof}
We will utilize the construction used to prove Theorem \ref{kkos-theorem7}.
We obtain a graph $H = H_Q$ by adding all edges $a_ia_j$ with $a_i,a_j \in A$,
$i \ne j$, to the graph $G_Q$ constructed from an instance $Q$ of \textsc{3-DM}. It can be easily observed that the vertex set of the resulting graph $H$ can be partitioned into the clique $K = \{r\} \cup A$ and
the independent set $I = B$, i.e., $H$ is a split graph. Thus we can exploit
results on the structure of its $s$-optimal tree $T^{\mathrm{sopt}}$ (resp. $m$-optimal tree $T^{\mathrm{mopt}}$). In particular, due to Claim~\ref{kkos-claim4}
one of the vertices $a_i$ is a source vertex of $T^{\mathrm{sopt}}$ (resp. $T^{\mathrm{mopt}}$), since the condition
$\deg_H v = \Delta(H) = m + 3$ holds only for vertices from $A$, and all
vertices in $B$ are leaves of $T^{\mathrm{sopt}}$ (resp. $T^{\mathrm{mopt}}$) due to Claim~\ref{kkos-claim1}.
	
Any $s$-optimal tree $T^{\mathrm{sopt}}$ (resp. $m$-optimal tree $T^{\mathrm{mopt}}$) of the constructed split graph $H$ clearly has $m + 3n$ paths of length one. Each of $3n - 3$ edges connecting $A$ and $B$ except for three edges incident to the source vertex, induces $m + 2$ paths of
length three. Additionally there exist $(m + 3)(m + 2)/2 - 3 + 3n - 3$ paths
of length two that do not consist of two edges connecting $A$ and $B$. There are $3\delta_4 + \delta_3$ remaining paths of length
two, where $\delta_i$ is again the number of vertices in $A$ that have degree $i$ in the tree $T^{\mathrm{sopt}}$ (resp. $T^{\mathrm{mopt}}$). Thus, due to
$3\delta_4 + 2\delta_3 \le 3n$ (with $|B| = 3n $) and Proposition~\ref{kkos-lemma3} we have

\begin{equation*}
\begin{split}
s(T^{\mathrm{sopt}}) & = m^2 + 3mn + 3m + 15n - 12 + 6\delta_4 + 2\delta_3 \le
m^2 + 3mn + 3m + 21n - 12, \\ 
m (T^{\mathrm{mopt}}) & = m^2 + 7m + 12n - 6 + 6\delta_4 + 2\delta_3 \le
m^2 + 7m + 18n - 6
\end{split}
\end{equation*}
with equality if and only if $\delta_4 = n$ (since  $\delta_3 = 0$ and $6\delta_4 + 2\delta_3 = 6\delta_4 + 4\delta_3 = 6n$), i.e., if and only if the tree
$T^{\mathrm{sopt}}$ (resp. $T^{\mathrm{mopt}}$) arises from a perfect 3-dimensional matching. This yields
the NP-completeness of \textsc{$s$-SF Spanning Tree} and \textsc{$m$-SF Spanning Tree} problems for split graphs.
\end{proof}

It should be noted that Corollary~\ref{kkos-theorem3} implies that both \textsc{$m$-SF Spanning Tree} and \textsc{$s$-SF Spanning Tree}
problems are polynomially solvable for threshold graphs.

Finally, regarding the relations with the max-leaf spanning tree problem, we show that the difference between $\tau_2(G)$ and 
$\max_{T\,\in\, ML(G)} s(T)$ can be arbitrarily large, even within the 
class of split graphs. For an integer $\omega \ge 4$ we construct a split
graph $G_{\omega}$ of order $|G_{\omega}| = 3\omega - 2$ with split
partition $(K, I)$, where $K = \{c_1, c_2, \ldots, c_{\omega}\}$ and 
$I = \{b_1, b_2, \ldots, b_{\omega-1}, b_{\omega}, b_{\omega + 1},
\ldots, b_{2\omega-2}\}$. Each vertex $c_i$, $i = 1, 2, \ldots, \omega - 1$,
is adjacent to the vertices $b_i$ and $b_{i + \omega - 1 }$ and,
additionally, $N_{G_{\omega}}(c_{\omega}) = \{b_{\omega}, b_{\omega + 1},
\ldots, b_{2\omega-2}\}$ (see Fig.~\ref{new-fig7}).
	
\begin{figure}[h]
\centering
\includegraphics[width=6cm]{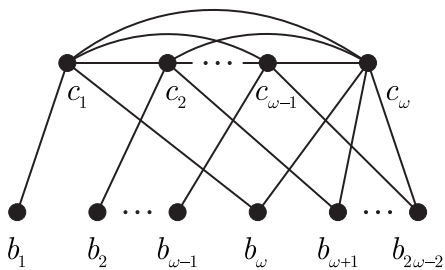}
\caption{The graph $G_{\omega}$}
\label{new-fig7}
\end{figure}

We observe that a minimum connected dominating set of $G_{\omega}$ consists of
vertices $c_1, c_2, \ldots, c_{\omega - 1}$. Firstly each vertex $c_i$,
$i = 1,..., \omega - 1$,  must be included in any minimum connected dominating set
as the only neighbor of $b_i$ (which is not included in minimum connected
dominating set, because of its minimality). And secondly, the set 
$\{c_1, c_2, \ldots, c_{\omega - 1}\}$ of vertices is a connected dominating set of $G_{\omega}$.
This in particular means $c_{\omega}$ is a leaf for any max-leaf spanning tree and
consequently none of the edges $c_{\omega}b_{\omega}, \ldots, c_{\omega}b_{2\omega - 2}$
are in any max-leaf spanning tree. Moreover $\ell(G_\omega) = 2\omega - 1$.
	
Now we produce a new split graph $H_{\omega}$ from $G_{\omega}$ by deleting edges
$c_{\omega}b_{\omega}, \ldots, c_{\omega}b_{2\omega - 2}$. The graph $H_{\omega}$ has
split partition $(K\setminus \{c_{\omega}\}, I \cup \{c_{\omega}\})$. Moreover, every
spanning tree of $G_{\omega}$ with the maximum number of leaves appears to be a spanning
tree of $H_{\omega}$. According to the previous claims, an $s$-optimal tree of $H_{\omega}$
has one of vertices $c_1, c_2, \ldots, c_{\omega - 1}$ as source and vertices 
$b_1, \ldots, b_{2\omega-2}$ and $c_{\omega}$ as leaves. The $s$-optimal tree of
$H_{\omega}$ with source vertex $c_1$ denoted by $T^\mathrm{sopt}_H$ is depicted in
Fig.~\ref{new-fig8} (left). It can be calculated that $s(T^\mathrm{sopt}_H) = 3\omega^2 + 6\omega - 15$ holds.
Since all $s$-optimal trees of $H_{\omega}$ have $2\omega - 1$ leaves and they are clearly
spanning trees of $G_{\omega}$ as well, we have
$\max_{T \in ML(G_\omega)} s(T) = s(T^\mathrm{sopt}_H)$.

On the other hand the $s$-optimal tree $T^\mathrm{sopt}_G$  of $G_\omega$, illustrated in
Fig.~\ref{new-fig8} (right), has source vertex $c_{\omega}$ (due to $\omega \ge 4$, $c_{\omega}$ has
maximum degree in $G_\omega$), $2\omega - 2$ leaves $b_1, \ldots, b_{2\omega-2}$ and $s$-metric
$s(T^\mathrm{sopt}_G) = 6\omega^2 - 10\omega + 4$.

\begin{figure}[h]
\centering
\includegraphics[width=13.5cm]{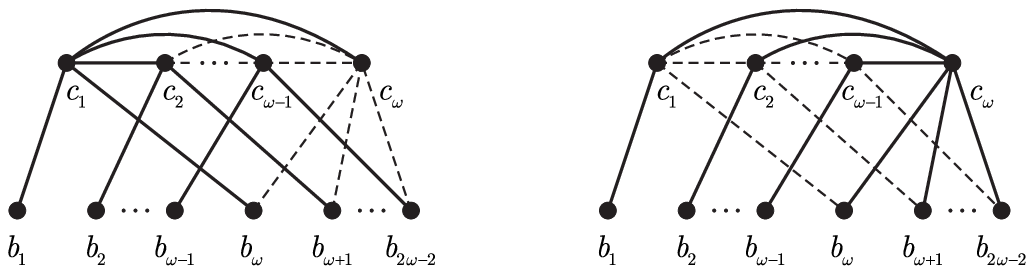}
\caption{The trees $T^\mathrm{sopt}_H$ (left) and $T^\mathrm{sopt}_G$ (right) of $H_{\omega}$ and $G_{\omega}$, respectively}
\label{new-fig8}
\end{figure}

Therefore for each integer $\omega \ge 4$ we have
\begin{equation*}
\tau_2(G_\omega)\,\, - \max_{T \in ML(G_\omega)} s(T) = 3\omega^2 - 16\omega + 19.
\end{equation*}

\section{Integer linear programming formulation and \\heuristics}
\label{scale-free:sec6}

In this section we investigate the practical aspects of scale-free spanning tree problems from the experimental algorithmics perspective. We describe two integer linear programming models and two heuristics for the \textsc{$s$-SF Spanning Tree} problem and conduct computational experiments for various simulated and experimental graphs to evaluate their performance. We concentrate on the \textsc{$s$-SF Spanning Tree} problem, as for the \textsc{$m$-SF Spanning Tree} problem the algorithms are similar. We conclude by demonstrating how the concept of scale-free spanning tree could be used in computational epidemiology for the inference of the history of a viral epidemic spread.
	
For a given spanning tree $T$ of a graph $G=(V,E)$, consider the variables $(x_e)_{e\in E}$ that are defined as follows: 
\begin{equation}\label{eq:charvect}
x_e = \begin{cases}
1, \quad e \in E(T); \\
0, \quad \text{otherwise}.
\end{cases}
\end{equation}

\noindent
Obviously, $T$ contains a path of length 2 or 3 of $G$ if and only if it contains all its edges. This fact and Proposition~\ref{kkos-lemma3} imply that
\begin{equation}\label{eq:s-metr-bool}
s(T) = \sum_{(e_i,e_j,e_k)\, \in\, \Gamma_3(G)} x_{e_i}x_{e_j}x_{e_k} + 2\sum_{(e_i,e_j)\, \in\, \Gamma_2(G)} x_{e_i}x_{e_j} + \sum_{e\, \in\, E(G)} x_e,
\end{equation}
where $\Gamma_i(G)$ denotes the set of all trails of length $i$ in $G$.  In order to linearise \eqref{eq:s-metr-bool} we introduce boolean variables $y_{ijk}$ and $y_{ij}$ and the following constraints:
\begin{equation}\label{constr:lin}
\begin{aligned}
y_{ijk} & \le x_i, & y_{ij} & \le x_i, \\ 
y_{ijk} & \le x_j, & y_{ij} & \le x_j, \\
y_{ijk} & \le x_k, & y_{ij} & \ge x_i + x_j - 1, \\
y_{ijk} & \ge x_i + x_j + x_k - 2, \quad \quad & & 
\end{aligned}
\end{equation}
for every $(e_i,e_j,e_k) \in \Gamma_3(G)$ and $(e_i,e_j) \in \Gamma_2(G)$, which are equivalent to $y_{ijk} = x_{e_i}x_{e_j}x_{e_k}$ and $y_{ij} = x_{e_i}x_{e_j}$. Thus the objective function \eqref{eq:s-metr-bool} can be rewritten as
\begin{equation}\label{odj:lin}
s(T) = \sum_{(e_i,e_j,e_k)\, \in\, \Gamma_3(G)} y_{ijk} + 2\sum_{(e_i,e_j)\, \in\, \Gamma_2(G)} y_{ij} + \sum_{e\, \in\, E(G)} x_e.
\end{equation}

Next, we use two types of constraints to describe the spanning trees.
The first type is  Martin's extended formulation~\cite{Martin91}. Here we use auxiliary variables 
\begin{equation}\label{vars:martin}
z_{(v,w)}^r,\, z_{(w,v)}^r \ge 0 \quad \text{for every}\,\, r \in V(G),\, vw \in E(G),
\end{equation}
where $z_{(v,r)}^r = 0$ for every $r \in V$ and $vr \in E(G)$.
A 0/1-vector $x$ describes a spanning tree of $G$ if and only if there are $z$-variables as in~\eqref{vars:martin} that satisfy the following constraints:
\begin{equation}\label{constr:martin}
\begin{split}
x_{vw} - z_{(v,w)}^r - z_{(w,v)}^r & = 0, \quad r \in V(G),\, vw \in E(G),  \\
\sum_{vw \in E(G)} z_{(v,w)}^r 	& = 1, \quad  r, w \in V(G),\, r \neq w, \\
\sum_{vr \in E(G)} z_{(v,r)}^r	& = 0, \quad r \in V(G).
\end{split}
\end{equation}
	
Another way is to exploit Miller~--~Tucker~--~Zemlin constraints~\cite{MillerTZ60}. We introduce the auxiliary variables 
\begin{equation}\label{vars:miller}
\begin{split}
z_{(v,w)},\, z_{(w,v)} \in \{0,1\} \quad \text{for every}\,\, vw \in E(G),\\
t_v \in [0, n-1] \quad \text{for every}\,\, v \in V(G),
\end{split}
\end{equation}
where $n = |V(G)|$ and constraints
\begin{equation}\label{constr:miller}
\begin{split}
x_{vw} - z_{(v,w)} - z_{(w,v)} & = 0, \quad vw \in E(G), \\
\sum_{vw \in E(G)} z_{(v,w)} 	& = 1, \quad  w \in V(G)\setminus\{r\}, \\
\sum_{vr \in E(G)} z_{(v,r)}	& = 0, \quad r \in V(G), \\
t_v - t_w + nz_{(v,w)} & \le n-1, \quad v,w \in V(G),\, vw \in E(G).
\end{split}
\end{equation}
	
The problem of maximization of the objective \eqref{odj:lin} subject to the constraints \eqref{constr:lin}, \eqref{constr:martin} with auxiliary variables \eqref{vars:martin} will be further referred to as Martin formulation, and the problem with the same objective subject to the constraints \eqref{constr:lin}, \eqref{constr:miller}  with auxiliary variables \eqref{vars:miller} as Miller~--~Tucker~--~Zemlin or MTZ formulation.

We also consider the following two simple greedy heuristics for finding $s$-optimal tree of a graph $G$:
\medskip

\textit{Heuristic-1:} Weight each edge $uv$ of $G$ with $\deg_G u \deg_G v$ and find the maximum-weight spanning tree using Kruskal’s algorithm.

\medskip

\textit{Heuristic-2:} Construct a spanning tree iteratively as follows.  Initialize the algorithm by the tree $T^0$ consisting of all edges incident to the vertex of the maximum degree in $G$. At each next step, choose the vertex $u$ of the previously constructed tree $T^i$ with the maximum number of adjacent vertices outside of $T^i$ and add all edges connecting $u$ to these vertices. The algorithm stops when the current tree spans all vertices of $G$ .
\medskip

Linear programming problems were solved using Gurobi Optimizer Version 8.1. The experiments were conducted using Gurobi Python interface on a standard laptop with 2.0 GHz i7 dual core processor and 16 GB of RAM. Below we describe the results of computational experiments for synthetic and real data-based graphs.

\subsection{Synthetic graphs}

We used graphs from the following synthetic datasets:
\medskip

\textit{Erd\H{o}s~--~R\'{e}nyi graphs}. Those are random $n$-vertex graphs constructed by adding each possible edge uniformly and independently with the probability  $p = 4.25/n$. The number of nodes in our experiments varied from 10 to 40, and the timeout for ILP solver was set to 2400 s.

\medskip

\textit{Grid graphs}. A $n \times m$ grid graph  is a Cartesian product of paths $P_n$ and $P_m$. We explored $4 \times 4$, $4 \times 5$, $5 \times 5$, $5 \times 6$, $6 \times 6$, $6 \times 7$ and $7 \times 7$ grid graphs with timeout of 4500 s.

\medskip

\textit{Scale-free graphs}. We generated scale-free graphs of two types using NetworkX python graph library, which uses the method described in \cite{networkx}. The two explored types were scale-free graphs corresponding to the classical Barab{\'a}si~--~Albert model \cite{barabasi1999emergence} and scale-free graphs with NetworkX default parameters values (after removal of loops and multiple edges), with the latter graphs being denser. The timeout has been set to $1800$ s. 

For all synthetic datasets except for grid graphs we generated 10 graphs per numbers of nodes.

\begin{figure}[h]
\includegraphics[width=.49\textwidth]{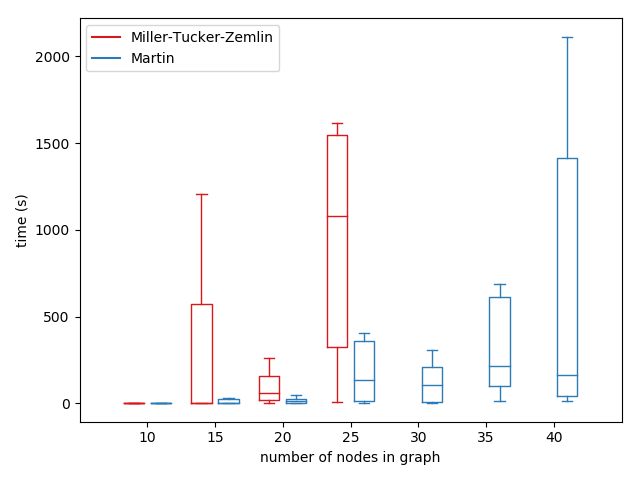} \hfill
\includegraphics[width=.49\textwidth]{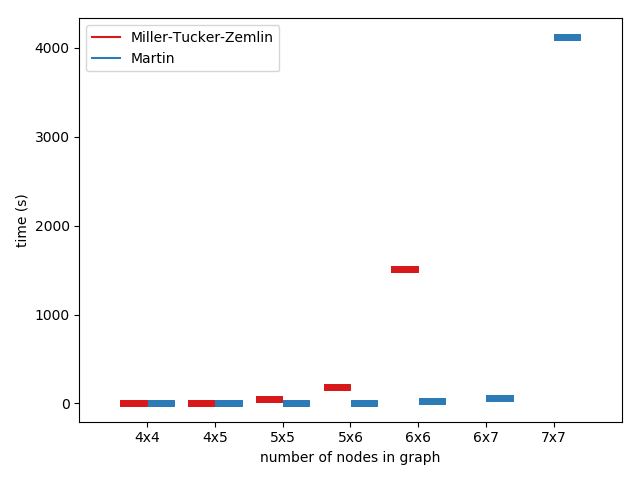} 
\caption{Running times of the ILP solver for two ILP problem formulations. Left to right: Erd\H{o}s~--~R\'{e}nyi graphs and grids}
\label{kkos-figure7-1}
\end{figure}

\begin{figure}[h]
\includegraphics[width=.49\textwidth]{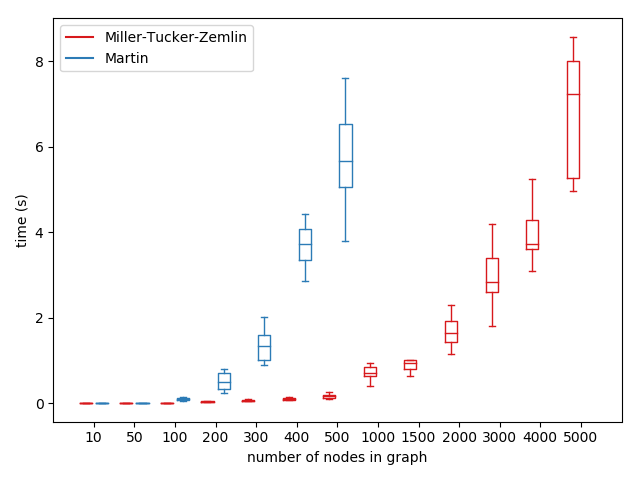} \hfill
\includegraphics[width=.49\textwidth]{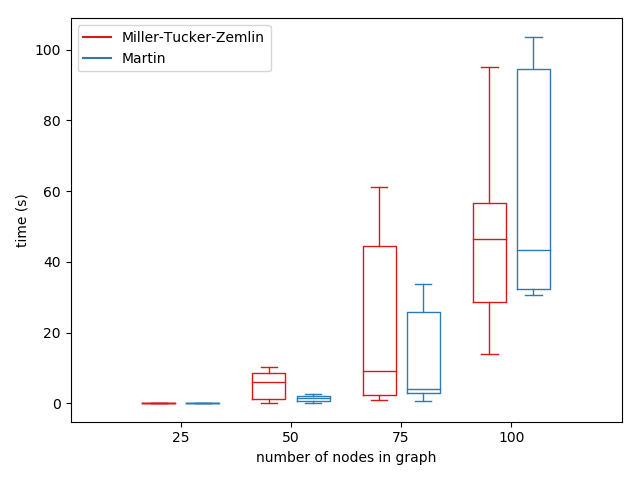}
\caption{Running times of the ILP solver for two ILP problem formulations. Left to right:  Barab{\'a}si~--~Albert scale-free graphs and NetworkX scale-free graphs}
\label{kkos-figure7-2}
\end{figure}

Figures~\ref{kkos-figure7-1},~\ref{kkos-figure7-2}  illustrate the running times of Integer Linear Programming solvers based on MTZ formulation and Martin formulation for all four simulated graph classes.\footnote{Running times for MTZ formulation on grids and Martin formulation on Barab{\'a}si~--~Albert scale-free graphs are plotted only for smaller $n$, since for large values they are significantly higher than for the other formulation. In particular, Martin formulation on Barab{\'a}si~--~Albert scale-free graphs works $\sim$ 150 s for 1000 vertices, $\sim$ 480 s for 1500 vertices and exceeds timeout of 1800 s for 2000 and more vertices.}
The results demonstrate that for those graph models the ILP algorithms in average perform much better than in the worst case and are able to produce optimal results in a reasonable amount of time. For Erd\H{o}s~--~R\'{e}nyi graphs and grids (see Fig.~\ref{kkos-figure7-1}), which are characterized by relatively large sets of feasible solutions, the Miller~--~Tucker~--~Zemlin formulation was superior, while for scale-free graphs (see Fig.~\ref{kkos-figure7-2}) the result of the comparison was the opposite, with Martin's formulation leading to the faster algorithm. In general, ILP allows to solve the problem within minutes or few hours for small-to-medium size problems (up to several dozens of vertices) on Erd\H{o}s~--~R\'{e}nyi graphs and grids, and for medium size problems (several hundred vertices) for scale-free graphs.

Finally, we analyzed the quality of solutions produced by two proposed heuristics on simulated data. For each heuristic solution $T^h$, the approximation ratio $\alpha(T^h)$ was calculated  in comparison to the optimal solutions produced by the exact ILP-based algorithm, i.e.  $\alpha(T^h) = s(T^{sopt}) / s(T^h)$, where $T^{sopt}$ is an optimal solution. The average approximation ratios over the graphs of the same vertex set size are shown on Figures~\ref{kkos-figure8-1},~\ref{kkos-figure8-2}.  For scale-free graphs (see Fig.~\ref{kkos-figure8-2}), both heuristics produce near-optimal solutions for all tested problem sizes.  In contrast, for Erd\H{o}s~--~R\'{e}nyi graphs and grids (see Fig.~\ref{kkos-figure8-1}), the accuracy was lower and significantly declined with the growth of $n$. Thus, these results demonstrate the efficiency of simple heuristic approaches for scale-free graphs and their more limited applicability for Erd\H{o}s~--~R\'{e}nyi and grid graphs.

\begin{figure}[h]
\includegraphics[width=.50\textwidth]{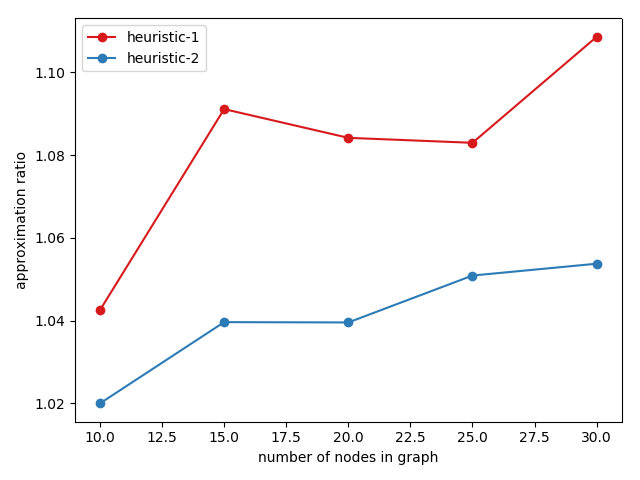}\hfill
\includegraphics[width=.50\textwidth]{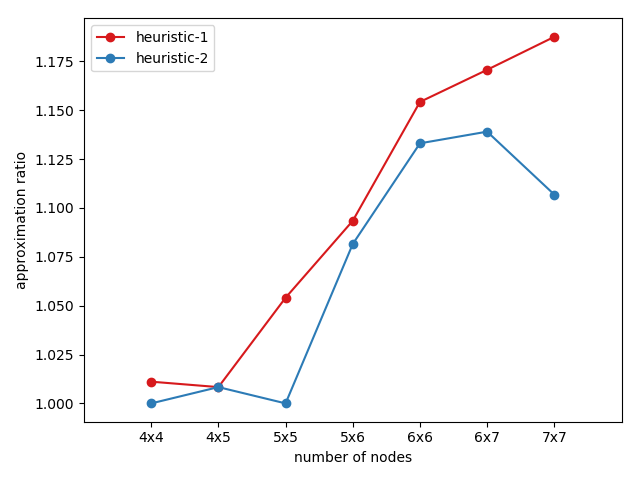} 
\caption{Approximation ratios of two heuristics. Left to right: Erd\H{o}s~--~R\'{e}nyi graphs and grids}
\label{kkos-figure8-1}
\end{figure}

\begin{figure}[h]
\includegraphics[width=.49\textwidth]{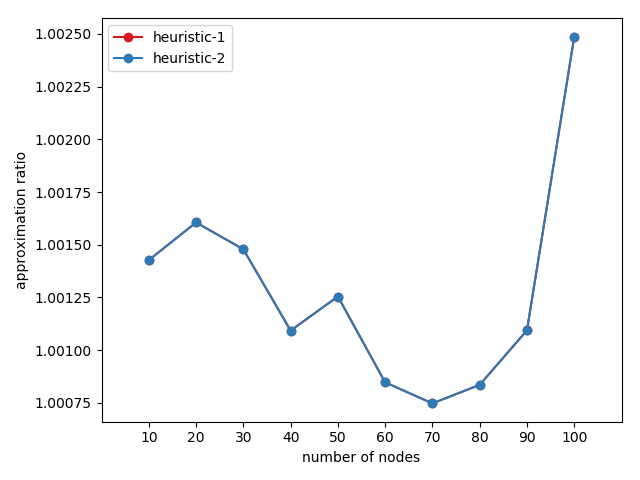} \hfill
\includegraphics[width=.49\textwidth]{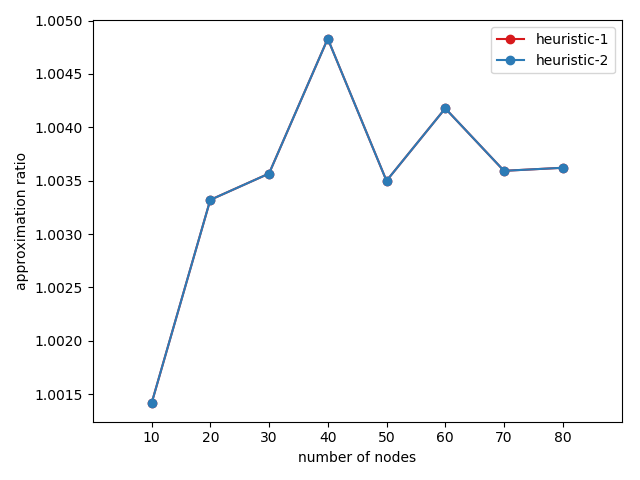}
\caption{Approximation ratios of two heuristics. Left to right: Barab{\'a}si~--~Albert scale-free graphs and NetworkX scale-free graphs}
\label{kkos-figure8-2}
\end{figure}

\subsection{Real data-based graphs}
We applied the concept of scale-free spanning trees to the graphs arising in the area of computational molecular epidemiology.  These graphs correspond to the transmission history reconstruction problem and have been constructed using the dataset consists of RNA sequences of Hepatitis C HVR1 genomic region of length 264 nucleotides sampled from 81 infected individuals involved in seven viral outbreaks \cite{skums2017quentin}. The vertices of each graph correspond to individuals, and two vertices $u$ and $v$ are adjacent, if the minimal relative Hamming distance between the sets of sequences sampled from these patients does not exceed the threshold $t = 3.625\%$. Here we follow the method of graph construction and the threshold value proposed in \cite{campo2015accurate}. In the obtained graph, eight connected components has been identified. Six of these components correspond to the outbreaks, while the seventh outbreak produced two components. For each connected component $C$, its own threshold $t_C$ was defined as the minimal value such that removal of edges $E_C$ corresponding to the distances greater than $t_C$ preserves the connectivity of this component. After removal of edges $E_C$, the ILP algorithm for Martin formulation has been run independently for each connected component. Optimal solutions has been obtained for all analyzed graphs within several hours. For six outbreaks, the superspreaders (the individuals who infected the majority of other individuals) are known from epidemiological investigations \cite{campo2015accurate}. Importantly, those superspreaders correspond to vertices of highest degrees in $s$-optimal trees for five out of six outbreaks. It indicates, that $s$-optimal trees indeed provide epidemiologically accuare and relevant information about transmission histories of viral outbreaks.

\section{Open problems}
\label{scale-free:sec7}

The first open problem is to identify non-trivial graph classes where \textsc{$m$-SF Spanning Tree} and  \textsc{$s$-SF Spanning Tree} problems are polynomially solvable. The analogy with the max-leaf spanning tree problem, for which very few such classes are known, suggests that this may be difficult for the problems under consideration as well. At the same time, the max-leaf spanning tree problem can be approximated within a constant factor thus suggesting the second open problem: verify whether constant or logarithmic approximation exists for \textsc{$m$-SF Spanning Tree} and  \textsc{$s$-SF Spanning Tree} problems. One possible way to investigate this problem is to verify whether $\tau_i(G) \leq c \max_{T \in ML(G)} s(T)$  for some constant $c$. At least it could be claimed that, for instance, the class of graphs where the $s$-optimal tree has the maximum number of leaves is quite rich. Indeed, for any connected graph $H$ there exist infinitely many graphs $G$ for which $\tau_2(G)$ is reached on the spanning tree with the maximum number of leaves and which contain $H$ as an induced subgraph. As an example of such a graph $G$ we can take the corona $H \circ \overline{K}_t$ for some integer $t \ge 1$. Another example of such graph $G$ can be described as follows. Take $n$ disjoint copies (where $n$ is the order of $H$) of a nontrivial tree $T$ with one vertex $r$ chosen as root of $T$ turning $T$ into a rooted tree. Then the graph $G$ can be obtained by identifying the $i$th vertex of $H$ with the root $r$ in the $i$th copy of $T$. It is easy to verify that $G$ has the desired property.



\bibliographystyle{ieeetr}
\bibliography{refs}



\end{document}